\documentclass[leqno,11pt]{amsart}
\usepackage{amssymb, amsmath}

\usepackage{color,ulem,textcomp}

\numberwithin{equation}{section}
\setlength{\oddsidemargin}{0mm} \setlength{\evensidemargin}{0mm}
\setlength{\topmargin}{-10mm} \setlength{\textheight}{230mm}
\setlength{\textwidth}{155mm}

%%%%%%%%%%%%%%%%%% packages %%%%%%%%%%%%%%%%%%%%
% \usepackage{showkeys}
%\usepackage[mathlines,displaymath,pagewise]{lineno}
%\usepackage{amsmath}
%\usepackage{amsfonts,amssymb,amsthm}
\usepackage{enumerate}
\usepackage{graphicx}
\usepackage{cancel}
%\usepackage{mathpazo}
%%%%%%%%%%%%%%%%%%%%%%%%%%%%%%%%%%%%%%%%%%%%%%%%

%%%%%%%%%%%%%%%%% theorems %%%%%%%%%%%%%%%%%%%%%
\theoremstyle{definition}
%[section]
%[section]
\theoremstyle{plain}
\newtheorem{thm}{Theorem}[section]%[chapter]%[section]%!!the counter really doesn't follow the chapters!!
\newtheorem{Prop}[thm]{Proposition}
\newtheorem{lem}[thm]{Lemma}

 \newtheorem{Thm}{Theorem}[section]
 \newtheorem{Rmk}[thm]{Remark}
 \newtheorem{Lem}[thm]{Lemma}
% \newtheorem{Lemma}{Lemma}
% \newtheorem{Def}[thm]{Definition}
%%%%%%%%%%%%%%%%%%%%%%%%%%%%%%%%%%%%%%%%%%%%%%%%

%%%%%%%%%%%%%%%% macros Laure %%%%%%%%%%%%%%%%%%

\def\N {\mathbf{N}}
\def\R {\mathbf{R}}
\def\D {\mathbf{D}}

\def\T {\mathbf{T}}

\def\Z {\mathbf{Z}}

\def\cA {\mathcal{A}}
\def\cB {\mathcal{B}}

\def\cD {\mathcal{D}}

\def\cG {\mathcal{G}}
\def\cL {\mathcal{L}}

\def\cM {\mathcal{M}}

\def\cZ {\mathcal{Z}}

\def\b {{\beta}}

\def\eps {{\varepsilon}}
\def\e {{\varepsilon}}

\def\k {{\kappa}}

\def\indc {{\bf 1}}

\def\d {{\partial}}

\newcommand{\Ker}{\operatorname{Ker}}

\newcommand{\ba}{\begin{aligned}}
\newcommand{\ea}{\end{aligned}}

\newcommand{\be}{\begin{equation}}
\newcommand{\ee}{\end{equation}}

\newcommand{\petit}{\xi}
\newcommand{\brown}{\Xi}

\let\dsp=\displaystyle
%%%%%%%%%%%%%%%%%%%%%%%%%%%%%%%%%%%%%%%%%%%%%%%%

%%%%%%%%%%%%%%%% macros Diogo %%%%%%%%%%%%%%%%%%

%%%%%%%%%%%%%%%%%%%%%%%%%%%%%%%%%%%%%%%%%%%%%%%%

%%%%%%%%%%%%% numbering equations %%%%%%%%%%%%%%
\numberwithin{equation}{section}
%%%%%%%%%%%%%%%%%%%%%%%%%%%%%%%%%%%%%%%%%%%%%%%%

%%%%%%%%%%%%%%%%%%%%%%%%%%%%%%%%%%%%%%%%%%%%%%%%
\begin{document}%%%%%%%%%%%%%%%%%%%%%%%%%%%%%%%%
%%%%%%%%%%%%%%%%%%%%%%%%%%%%%%%%%%%%%%%%%%%%%%%%

%\linenumbers

	\title[From hard-spheres to Brownian motion]{The Brownian motion  as the  limit of a   deterministic system of hard-spheres}
	\author{Thierry Bodineau, Isabelle Gallagher and Laure Saint-Raymond}
	\date{\today}
		
\begin{abstract}
We provide a rigorous derivation of the brownian motion  as the  limit of  a deterministic system of hard-spheres as the number of particles $N$ goes to infinity and their diameter $\varepsilon$ simultaneously goes to $0,$ in the fast relaxation limit $\alpha = N\varepsilon^{d-1}\to \infty $ (with a suitable 
 {diffusive} scaling of the observation time).

\noindent As suggested by Hilbert in his sixth problem, we rely on 
a kinetic formulation as an intermediate level of description between the microscopic and the fluid descriptions: we use indeed 
the linear Boltzmann equation to describe one tagged particle in a gas close to global equilibrium. 
 Our proof is based on the fundamental ideas of Lanford. 
The main novelty here is the detailed study of the branching process, leading to   explicit estimates on pathological collision trees.

\end{abstract}
	
	\maketitle
	\section{Introduction}

\subsection{From microscopic to macroscopic models}

We are interested here in describing the macroscopic behavior of a gas consisting of $N$ interacting particles of mass $m$ in a domain~$\D $ of~$ \R^d$, with positions and velocities $(x_i, v_i)_{1\leq i \leq N} \in (\D \times \R^d)^N$, the dynamics of which is given by
\begin{equation}\label{potentialcase}
\begin{aligned}
 \frac  { d x_i }{d t} = \displaystyle v_i \, , \qquad 
  m\frac  { d  v_i }{d  t} = -\frac1 \varepsilon  \sum_{ j \neq i} \nabla \Phi\Big(\frac{ x_i - x_j}\varepsilon\Big)\, ,
\end{aligned}
\end{equation}
for some compactly supported potential $\Phi$, meaning that the scale for the microscopic interactions is typically $\eps$. We shall actually mainly be interested in the case when the interactions are pointwise (hard-sphere interactions):  the presentation of that model is postponed  to Section~\ref{strategyandmainresults}, see~(\ref{hard-spheres1}),(\ref{hard-spheres2}).

\bigskip
\noindent
In the limit when $N\to \infty$, $\eps \to 0$ with $N\eps^d = O(1)$,  it is expected that the distribution of particles averages out to a local equilibrium.
The microscopic fluxes in the conservations of empirical density, momentum and energy should therefore converge to some macroscopic fluxes, and we should end up with a macroscopic system of equations (depending on the observation time and length scales).
 However the complexity of the problem is such that there is no complete derivation of any fluid model starting from the full deterministic Hamiltonian dynamics, regardless of the regime (we refer to \cite{OVY, EMY, QY}  for partial results obtained by adding a small noise in the microscopic dynamics).

\bigskip
\noindent
For rarefied gases, i.e. under the assumption that there is asymptotically no excluded volume~$N\eps^d \ll1$, Boltzmann   introduced an intermediate level of description, referred to as kinetic theory, in which the state of the gas is described by the statistical distribution $f$ of the position and velocity of a typical particle.
In the Boltzmann-Grad scaling~$\alpha \equiv N\eps^{d-1}  = O(1)$, we indeed expect 
the particles to undergo $\alpha$ collisions per unit time in average and all the correlations to be negligible.
Therefore, depending on the initial distribution of positions and velocities in the $2dN$-phase space, the 1-particle density $f$ should satisfy a closed evolution equation where the inverse mean free path $\alpha$ measures the collision rate.

\noindent In the fast relaxation limit $\alpha \to \infty$, we then expect the system to relax towards local thermodynamic equilibrium, and the dynamics to be described by some macroscopic  {equations} (depending on the observation time and length scales).

\begin{figure}[h] %  figure placement: here, top, bottom, or page
   \centering
  \includegraphics[width=3in]{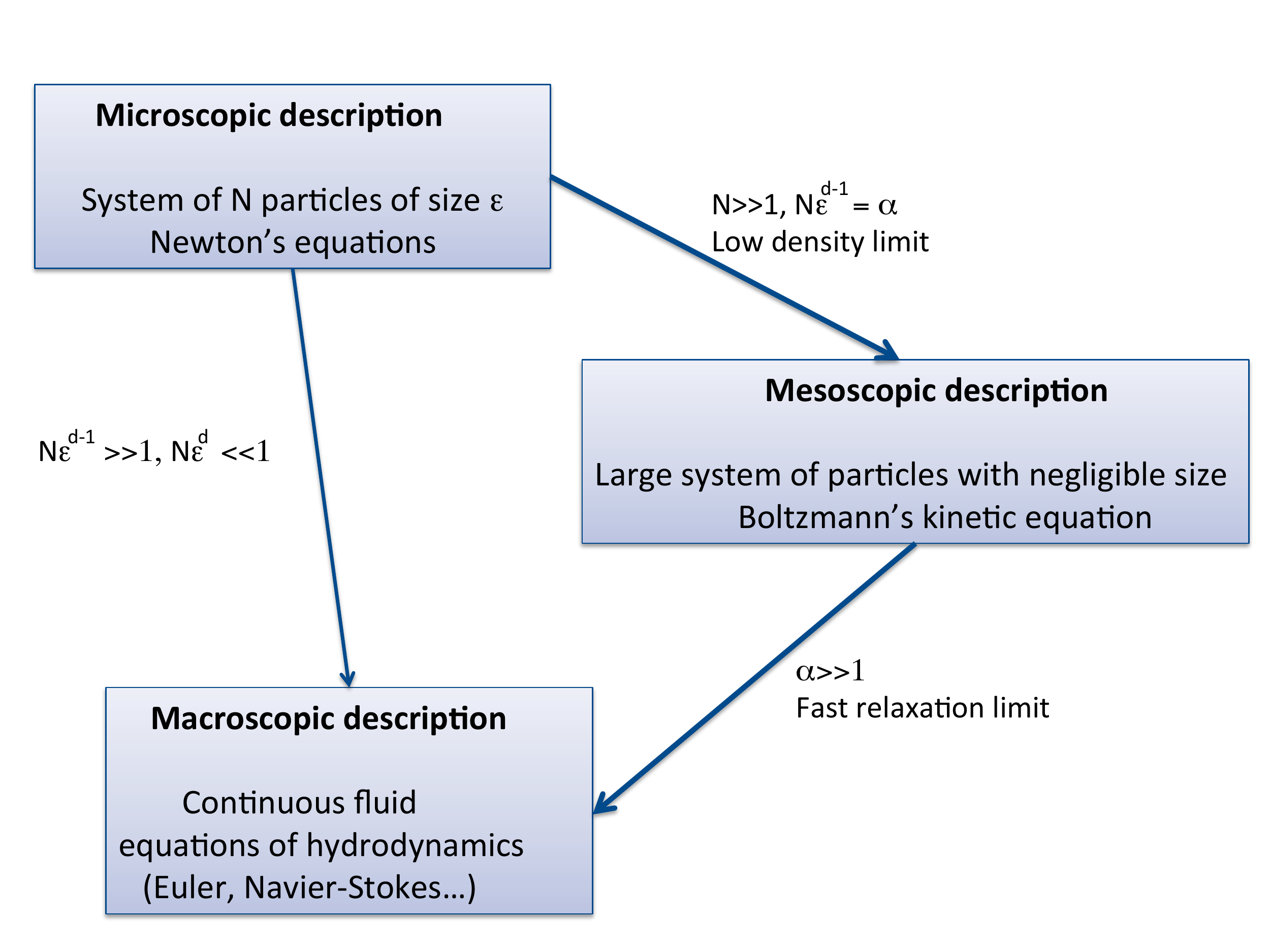} 
\caption{\label{hilbert}  {\small Fluid equations of hydrodynamics can be recovered directly from the microscopic system or in a two-step limit using Boltzmann's kinetic equation  as a mesoscopic description. Note that these two procedures may lead to limiting equations with different transport coefficients since the kinetic equation describes only  perfect gases (without excluded volume in the state relation).}  }
 \end{figure}

\bigskip
\noindent
One of the major difficulties to achieve this program using kinetic models as an intermediate description  is to justify the low density limit $\alpha \equiv N\eps^{d-1} $ on time intervals independent of~$\alpha$. Note that this step is also the most complicated one from the conceptual viewpoint  as it should explain the appearance of irreversibility, and dissipation mechanisms.

\bigskip
\noindent
The best result concerning the low density limit, which is due to Lanford in the case of hard-spheres~\cite{lanford} and King \cite{king} for more general potentials  (see also \cite{CIP,uchiyama,GSRT} for a complete proof)  is indeed valid only for short times, i.e. breaks down before any relaxation can be observed. The result may indeed be stated as follows \cite{GSRT} (see also \cite{PSS} for less restrictive assumptions on the potential $\Phi$). 
\begin{Thm}\label{lanford-thm}
Consider a system of $N$ particles interacting 
\begin{itemize}
\item either as hard-spheres of diameter $\eps$ 
\item or  as in~{\rm(\ref{potentialcase})} via a repulsive   potential $\Phi $, with compact support, radial and singular at 0, and such that the scattering of particles can be parametrized by their deflection angle. 
\end{itemize}

\noindent  Let $f_0:\R^{2d} \mapsto \R^+$ be a continuous density of probability such that
 $$\big\| f_0 \exp(\frac\beta 2|v|^2 ) \big\|_{L^\infty(\R^d_x \times \R^d_v)}\leq \exp (-\mu)$$
 for some $\beta>0, \mu \in \R$.
 
\noindent Assume that the $N$ particles are  initially distributed according to $f_0$ and ``independent". Then, there exists some $T^* >0$ (depending only on $\beta$ and $\mu$) such that, in the Boltzmann-Grad limit~$N \to \infty, \eps \to 0,  \, N\eps^{d-1} = \alpha$, the distribution function of the particles  converges uniformly on~$[0,T^*/\alpha] \times \R^{2d}$  to the solution of the Boltzmann equation
\begin{equation}
\label{boltzmann}
\begin{aligned}
& \d_t f +v\cdot \nabla_x f=\alpha Q(f,f),\\
& Q(f,f)(v):=\iint_{{\mathbf S}^{d-1} \times \R^d} [f(v^*)f(v^*_1)-f(v)f(v_1)]  \,b(v-v_1,\nu) \, dv_1 d\nu\,\\
& v^*=v +\nu \cdot (v_1-v)  \, \nu \, , 
	\quad v^*_1=v_1 - \nu \cdot(v_1-v) \, \nu \,,
 \end{aligned}
\end{equation}
 with a locally bounded cross-section $b$ depending on~$\Phi $ implicitly,  and with initial data $f_0$. In the case of a hard-sphere interaction, the cross section is given by
 $$
 b(v-v_1,\nu) = \big((v-v_1)  \cdot \nu\big)_+ \, .
 $$
\end{Thm}

\noindent Here, by ``independent", we mean   that the initial $N$-particle distribution satisfies a chaos property, namely that the correlations vanish asymptotically. Typically the distribution is obtained by factorization, and conditioning on energy surfaces (see~\cite{GSRT} and references therein). In the case of hard-spheres for instance, one would have
$$ f_{N}^0  = \cZ_N^{-1} f_0^{\otimes N}  \indc_{{\mathcal D}_\eps^N} \,,$$
%$$
% f_{N}^0 (Z_N)= N M_{N,\beta} (Z_N) +\Big (\sum_{j=1}^N \varphi_0(z_j) \Big)M_{N,\beta} (Z_N) 
%$$
with
$$
{\mathcal D}_\eps^N := \big\{ (x_1,v_1,\dots , x_N,v_N) \in {\mathbb T}^{dN} \times  {\mathbb R}^{dN}    \, / \,  \forall i \neq j \, ,\quad |x_i - x_j| > \eps \big\}
$$
and
$$
f_0^{\otimes N} (x_1,v_1,\dots , x_N,v_N):= \prod_{i=1}^N f_0 (x_i,v_i) \, , 
$$
while~$\cZ_N$ normalizes the integral of~$ f_{N}^0$ to 1.

%\bigskip
%\noindent The assumption on the potential (namely the fact that the scattering of particles can be parametrized by their deviation angle) holds for a large class of monotonic potentials, for more details we refer to~\cite{GSRT},~\cite{PSS}.

\bigskip
\noindent
The main difficulty to prove convergence  for longer time intervals consists in ruling out the possibility of spatial concentrations of the density leading to some pathological collision process.

\bigskip

\subsection{Linear regimes}

In this paper, we overcome this difficulty by considering a good notion of fluctuation around global equilibrium for the system of interacting particles. In this way we get a complete derivation of the diffusion limit from the hard-sphere system in a linear regime. Of course,  in this framework one cannot hope to retrieve a model for the full (nonlinear) gas dynamics, but -- as far as we know -- this is the very first  result describing the Brownian motion as the limit of a deterministic classical system of interacting particles.

\noindent
The main difficulty here is to justify the approximation by the linear Boltzmann equation  
\begin{equation}
\label{linear-boltzmann}
\begin{aligned}
& \d_t \varphi_\alpha  +v\cdot \nabla_x \varphi_\alpha =-\alpha \, \mathcal{L} \varphi_\alpha \\
& \mathcal{L} \varphi_\alpha (v) :=\iint [\varphi_\alpha(v) - \varphi_\alpha(v')] \, M_\beta(v_1) \,b(v-v_1,\nu) \, dv_1 d\nu \\
& M_\beta (v) := \left( \frac\beta{2\pi}\right)^{\frac d2}\exp \left( -\frac\beta2 |v|^2\right) \, , \quad \beta>0  \, ,
 \end{aligned}
\end{equation}
for times diverging as $\alpha$ when $\alpha\to \infty$. Indeed, in the diffusive regime, the convergence  of the 
Markov process associated to the 
linear Boltzmann operator $\mathcal{L}$  towards the Brownian motion is by now a classical result \cite{KLO}.

\section{Strategy and main results}\label{strategyandmainresults}

\noindent A good notion of fluctuation is obtained by considering the motion of a tagged particle (or possibly a finite set of tagged particles) in a gas of $N$ particles initially at equilibrium (or close to equilibrium), in the limit $N\to \infty$.

\subsection{The Lorentz gas} {If the background particles are infinitely heavier than the tagged particle then the dynamics can be approximated by a Lorentz gas, i.e. by the motion of the tagged particle in a frozen background.}
The linear Boltzmann equation has been derived (globally in time) from the dynamics of a tagged particle in a {low density} Lorentz   gas, meaning that
	\begin{itemize}
	\item the obstacles are distributed randomly according to some Poisson distribution;
	\item the obstacles have no dynamics, in particular they do not feel the effect of  collisions with the tagged particle.
	\end{itemize}
	
	\noindent This problem, suggested by Lorentz \cite{lorentz} at the beginning of the twentieth century  to study the motion of electrons in metals, is the {core}   of a number of works, and the corresponding literature includes a large variety of contributions. 
We do not intend to be exhaustive here 
and refer the reader to the book by Spohn \cite[Chapter 8]{Spohnbook} for a survey on this topic. We state one basic result due to Gallavotti \cite{gallavotti} in the low density limit
and   then   indicate some of the many important research directions.
		
	\begin{Thm} 
Consider randomly distributed scatterers with radius $\eps$ in $\R^d$ according to a  Poisson distribution   of parameter~$ \alpha \eps^{1-d}$. Let $T_\eps ^t$ be the flow of a point particle reflected at the boundary of these scatterers.
	For a given continuous initial datum $f_0 \in L^1 \cap L^\infty (\R^{2d})$, we define
	$$f_\eps (t,x,v) := {\mathbb E} [f_0( T_\eps ^{-t} (x,v))]\,.$$
	Then, for any time $T>0$, $f_\eps$ converges to the solution $f$ of the linear Boltzmann equation~{\rm(\ref{linear-boltzmann})},  with hard-sphere cross-section, in $L^\infty([0,T], L^1(\R^{2d}))$.
	\end{Thm}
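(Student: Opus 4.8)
The plan is to realize the averaged flow $f_\eps$ as the solution of a Boltzmann-type hierarchy (a Duhamel/Neumann series in the obstacle-collision number) and to pass to the limit term by term, exactly as in the Lanford strategy but drastically simplified by the fact that the obstacles are frozen, Poisson distributed, and mutually non-interacting. First I would expand $f_\eps(t,x,v) = \mathbb{E}[f_0(T_\eps^{-t}(x,v))]$ by conditioning on the obstacle configuration and summing over the number $k$ of obstacles the backward trajectory actually meets in $[0,t]$. Because the Poisson law has no exclusion constraint and the obstacles do not move, the $k$-th term is an explicit integral over the $k$ collision times $0<t_k<\dots<t_1<t$ and the $k$ impact parameters $\nu_1,\dots,\nu_k\in\mathbf S^{d-1}$, weighted by the Poisson intensity $\alpha\eps^{1-d}$ times the geometric cross-section $\eps^{d-1}\big((v-v_1)\cdot\nu\big)_+$ of each scatterer; the two powers of $\eps$ cancel, leaving a bounded limiting weight. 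This is the analogue of the ``pseudo-trajectory'' expansion, and one checks that the free-transport operators $v\cdot\nabla_x$ between consecutive collisions and the reflection rule at each collision produce precisely the gain term $\varphi(v')M_\beta(v_1)b(v-v_1,\nu)$ of $\mathcal L$, while the total-probability normalization produces the loss term $\varphi(v)M_\beta(v_1)b$.

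Next I would establish, uniformly in $\eps$, the absolute convergence of this series: the $k$-th term is bounded by $\|f_0\|_{L^\infty}$ times $(C\alpha t)^k/k!$ after integrating out the cross-sections against the Maxwellian, so the series converges for all $t$ (this is the well-known feature that the \emph{linear} problem has no finite-time restriction, in contrast with Theorem~\ref{lanford-thm}). The same bound gives equi-integrability in $x$ and $v$ via the $L^1\cap L^\infty$ assumption on $f_0$, so it suffices to prove convergence of each fixed-$k$ term and then invoke dominated convergence on the series. For the fixed-$k$ term one must show that as $\eps\to0$ the finite-size geometry of the scatterers degenerates to point scatterers: recollision events (the backward trajectory hitting the same obstacle twice, or ``shadowing'' by overlapping obstacles) carry a volume factor $O(\eps)$ in the integration and hence vanish; on the complement, the $\eps$-trajectory converges pointwise to the broken-line trajectory of the linear Boltzmann process. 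Passing to the limit term by term then identifies $\lim f_\eps$ with the Duhamel series for the solution $f$ of~(\ref{linear-boltzmann}) with hard-sphere cross-section and datum $f_0$; uniqueness for that linear equation (again immediate from the Gronwall/series estimate) closes the argument, and the convergence is in $L^\infty([0,T],L^1(\R^{2d}))$ because all estimates are uniform on $[0,T]$.

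The main obstacle is the control of the pathological obstacle configurations, i.e. showing that the contribution of backward trajectories undergoing recollisions or grazing/overlapping-scatterer interactions is $o(1)$ as $\eps\to0$, uniformly in $k$ up to the exponential cutoff supplied by the series bound. Concretely one partitions the collision-parameter integration domain into a ``good set'', on which the $k$ successive deflected segments are pairwise at distance $\gg\eps$ and the $\eps$-dynamics is a genuine sequence of single scatterings converging to the idealized one, and a ``bad set'' whose measure is $O(k^2\eps \cdot(\text{something}))$; combined with the $(C\alpha t)^k/k!$ weight this is summable and tends to $0$. Everything else—the explicit form of the gain and loss terms, the identification of the limit with $\mathcal L$, the $L^1$ convergence—is then routine bookkeeping of the Duhamel expansion. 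I would also note that, because the obstacles are frozen, there is no transport of correlations and hence none of the delicate ``clustering estimates'' of the interacting case are needed here; this is precisely why Gallavotti's theorem holds globally in time while Theorem~\ref{lanford-thm} does not.
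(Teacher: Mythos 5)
There is nothing to compare against inside the paper: this statement is Gallavotti's theorem, which the authors quote as background with references to \cite{gallavotti} and \cite{Spohnbook} and do not prove. Your sketch is precisely the classical Gallavotti argument — expand the Poisson expectation over the obstacles actually met by the backward trajectory, change variables from each obstacle centre to a collision time and impact parameter so that the Jacobian $\eps^{d-1}\big(v\cdot\nu\big)_+$ cancels the intensity $\alpha\eps^{1-d}$, resum the unseen obstacles into the exponential that becomes the loss term, bound the $k$-th term by a $(C\alpha|v|t)^k/k!$-type estimate so the series converges for all times, and show that the recollision/grazing/overlap configurations, on which the centre-to-parameter map fails to be one-to-one, have measure $O(\eps^a)$ and hence contribute nothing in the limit — so in substance you have reproduced the proof from the cited literature, and the architecture is sound. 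One caution on the identification of the limit: the scatterers are frozen, so the limiting collision operator is the one with obstacles at rest — the speed $|v|$ is conserved, the post-collisional velocity is $v'=v-2(v\cdot\nu)\nu$, and the kernel is $(v\cdot\nu)_+$ with no integration of a background velocity $v_1$ against $M_\beta$; your gain term ``$\varphi(v')M_\beta(v_1)b(v-v_1,\nu)$'' reads the loosely stated equation~\eqref{linear-boltzmann} too literally (the Maxwellian background belongs to the tagged-particle-in-a-gas setting of the rest of the paper, not to the Lorentz gas), and likewise the uniform-in-$v$ factor ``$(C\alpha t)^k/k!$ after integrating against the Maxwellian'' should be the fixed-speed bound $(C\alpha|v|t)^k/k!$, which is still summable and, together with the measure-preservation of the billiard flow and the $L^1\cap L^\infty$ hypothesis, gives the stated $L^\infty([0,T],L^1)$ convergence by dominated convergence.
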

\noindent 	
{A refinement of this result can be found for instance in \cite{S} in terms of convergence of path measures (and not only of the mean density), as well as in \cite{BBS} where the convergence is proven for typical scatterer configurations (and not only in average).}
%: instead of characterizing only the mean motion, Spohn proves indeed   the convergence of the process, as well as the fact that  the limiting process is Markovian if and only if the density of the scatterers converges in probability to its mean.
	
\medskip

\noindent These convergence statements lead naturally to various questions concerning
	\begin{itemize}
	\item the assumptions on the microscopic potential of interaction,
	\item the role of randomness for the distribution of scatterers,
	\item the long time behavior of the system, in particular the relaxation towards thermodynamic equilibrium and hydrodynamic limits.
	\end{itemize}

	\medskip

\noindent 	The first  point was addressed by Desvillettes, Pulvirenti and Ricci \cite{DP, DR}. Their goal was to derive ``singular " kinetic equations such as the linear Boltzmann equation without angular cut-off or the Fokker-Planck equation, from a system of particles with long-range interactions. They have obtained partial results in this direction, insofar as they can consider only asymptotically long-range interactions. Due to the fact that the range of the potential is infinite in the limit, the test particle interacts typically with infinitely many obstacles. Thus the set of bad configurations of the scatterers (such as the set of configurations yielding recollisions) preventing the Markov property of the limit must be estimated explicitly. Even though the long-range tails add  a very small contribution to the total force for each typical scatterer distribution, the non grazing collisions generate an exponential instability making the two trajectories (with and without cut-off) very different. 
The complete derivation of the linear Boltzmann equation for long-range interactions is therefore still open.
%, although the methods developed in this paper could be a first angle of attack, at least for sufficiently decaying potentials~\cite{GSRS}.  
\medskip

\noindent 	
{It is often   appropriate from a physical point of view to consider more general distributions of obstacles than the Poisson distribution. In particular, in the original problem of Lorentz, the atoms of metal are distributed on a periodic network.
For the two-dimensional periodic Lorentz gas with fixed scatterer size, Bunimovich and Sinai \cite{BS} have shown the convergence, after a suitable time rescaling, of the tagged particle to a brownian motion. Their method relies on techniques from ergodic theory~: it uses the fact that the mapping carrying a phase point on the boundary of a scatterer to the next phase point along its trajectory  can be represented by a symbolic dynamics on a countable alphabet which is an ergodic Markov chain on a finite state space. 
Another important research direction, initiated by Golse, is to consider the periodic Lorentz gas in the Boltzmann-Grad limit. In this case there can be infinitely long free flight paths and the 
linear Boltzmann equation is no longer a valid limit~\cite{golse, CG2, Marklof, MarklofStrombergsson}, 
but the convergence toward a Brownian motion can be recovered after an appropriate superdiffusive rescaling \cite{MarklofToth}.

 \medskip

\noindent 
In \cite{ESY, erdos}, Erd\"os, Salmhofer and Yau  obtained the counterpart of the long time behavior for random quantum systems.
Our approach is closer to their method than to the ones used for the periodic Lorentz gas
(even though the setting of \cite{ESY} deals with a fixed random distribution of obstacles and a slightly different regime, known as weak coupling limit).  Their proof is indeed  based on a careful analysis of Duhamel's formula in combination with a renormalization of the propagator and 
   stopping rules to control recollisions. We refer also to~\cite{roeck_frohlich} for further developments on the quantum case.

\subsection{Interacting gas of particles}
%{An alternative strategy based on the maximum principle} 

We adopt here a  different point of view, and consider a deterministic system of $N$ hard-spheres, meaning that the tagged particle is identical to the particles of the background, interacting according to the same collision laws.
 In this paper, we will focus on the case $d \geq 2$ (and refer to~\cite{Szasz_Toth} for results in the case~$d=1$).
}

 \noindent On the one hand,  the problem seems more difficult than the Lorentz gas insofar as
 the background has its own dynamics, which is coupled with the tagged particle.
But, on the other hand, pathological situations as described in \cite{golse,CG1,CG2} are not stable: because of the dynamics of the scatterers, we expect the situation to be better  since 
some ergodicity could be retrieved from the additional degrees of freedom. In particular, there are invariant measures for the whole system, i.e. the system  consisting in  both the background and the tagged particle.

\noindent Here we shall take advantage of {the latter property} to establish global uniform a priori bounds for the distribution of particles, and more generally for all marginals of the $N$-particle distribution (see Proposition \ref{apriori-est}). This will be the key to control the collision process, and to prove (like in  Kac's model~\cite{kac} for instance) that dynamics for which a very large number of collisions occur over a short time interval, are of vanishing probability.

\noindent 
 Note that a similar strategy, based on the existence of the invariant measure, was   already used by 
 van Beijeren, Lanford, Lebowitz and Spohn
  \cite {beijeren, LS2} to derive  the linear Boltzmann equation for long times.

%%%%%%%%% fin des modifications

\bigskip
\noindent
 Let us now give the precise framework of our study. As     explained above, the idea is to improve Lanford's result    by considering fluctuations around some global equilibrium. Locally the~$N$-particle   distribution $f_N$ should therefore look like a conditioned  tensorized Maxwellian.

\bigskip

\noindent In   the sequel, we shall focus on the case of   hard-sphere dynamics (with mass $m=1$) to avoid technicalities due to artificial boundaries and cluster estimates. We shall further restrict our attention to the case when the domain is periodic $\D = \T^d = {[0,1]^d}$ ($d\geq 2$).

\noindent The microscopic model is therefore given by the following system of ODEs: 
\begin{equation}
\label{hard-spheres1}
{dx_i\over dt} =  v_i \,,\quad {dv_i\over dt} =0 \quad \hbox{ as long as \ } |x_i(t)-x_j(t)|>\eps 
\quad \hbox{for \ } 1 \leq i \neq j \leq N
\, ,
\end{equation}
with specular reflection after a collision %for two indexes~$i$ and~$j$ satisfying~$1 \leq i \neq j \leq N$, we have
\begin{equation}
\label{hard-spheres2}
\begin{aligned}
\left. \begin{aligned}
& v_i(t^+) =  v_i(t^-) - \frac1{\eps^2} (v_i-v_j)\cdot (x_i-x_j) (x_i-x_j) (t^-) \\
& v_j(t^+) =  v_j(t^-) + \frac1{\eps^2} (v_i-v_j)\cdot (x_i-x_j) (x_i-x_j) (t^-)\end{aligned}\right\} 
\quad  \hbox{ if } |x_i(t)-x_j(t)|=\eps\,.
\end{aligned}
\end{equation}

\noindent In the following we denote, for~$1 \leq i \leq N$, $z_i:=(x_i,v_i)$ and~$Z_N:= (z_1,\dots,z_N)$.   With a slight abuse we say that~$Z_N $ belongs to~$ \T^{dN} \times \R^{dN}$ if~$ X_N:=(x_1,\dots,x_N)$ belongs to~$ \T^{dN}$ and~$V_N:=(v_1,\dots,v_N)$ to~$ \R^{dN}$.
Recall that the phase space is denoted by
$$
{\mathcal D}_\eps^N:= \big\{Z_N \in \T^{dN} \times \R^{dN} \, / \,  \forall i \neq j \, ,\quad |x_i - x_j| > \eps \big\}.
$$
We now distinguish   pre-collisional configurations  from   post-collisional  ones by defining 
 for indexes~$1 \leq i \neq j \leq N$
$$
\begin{aligned}
\d {\mathcal D}_{\eps}^{N\pm }(i,j) := \Big \{Z_N \in \T^{dN} \times \R^{dN} \, / \,  &|x_i-x_j| = \eps \, , \quad \pm (v_i-v_j) \cdot (x_i- x_j) >0 \\
& \mbox{and}  \quad\forall (k,\ell) \in\big\{ [1,N] \setminus \{i,j\}\big\}^2 \, ,  |x_k-x_\ell| > \eps\Big\} \, .
\end{aligned}
$$
Given~$Z_N$ on~$\d {\mathcal D}_{\eps}^{N+}(i,j)$,  we define~$Z_N^*\in \d {\mathcal D}_{\eps}^{N-}(i,j)$ as the configuration having the same positions $(x_k)_{1\leq k\leq N}$, the same velocities $(v_k)_{k\neq i,j}$ for non interacting particles, and the following pre-collisional velocities for particles $i$ and $j$
$$
\begin{aligned}
 v_i^*& := v_i - \frac1{\eps^2} (v_i-v_j)\cdot (x_i-x_j) (x_i-x_j)   \\
 v_j^*& := v_j + \frac1{\eps^2} (v_i-v_j)\cdot (x_i-x_j) (x_i-x_j) \, .
\end{aligned}
$$

	\medskip
	
	\noindent 
Defining the Hamiltonian  
$$
H_N (V_N):= \frac12  \sum_{i=1}^N |v_i|^2  \, ,
$$
we consider the Liouville equation in the $2Nd$-dimensional phase space~${\mathcal D}_\eps^N$ 
	\begin{equation}
	\label{Liouville}
	\d_t f_N +\{ H_N, f_N\} =0
	\end{equation}
	with specular reflection on the boundary, meaning that if~$Z_N$ belongs to~$\d {\mathcal D}_{\eps}^{N+}(i,j)$  then
\begin{equation}\label{tracecondition}
	 f_N (t,Z_N ) =  f_N (t,Z_N^*) \, .
\end{equation}  We recall, as shown in~\cite{alexanderthesis} for instance, that
the set of initial configurations leading to   ill-defined characteristics (due to clustering of collision times, or collisions involving more than two particles) is of measure zero in~${\mathcal D}_\eps^N$.

	\medskip
	\noindent 
Define the Maxwellian distribution by  
\begin{equation}
\label{eq: Maxwellian}
	M_\beta^{\otimes s}(V_s):=\prod_{i=1}^sM_\beta (v_i) 
	\quad \mbox{and} \quad 
		M_\beta (v) := \left( \frac\beta{2\pi}\right)^{\frac d2}\exp \left( -\frac\beta2 |v|^2\right)\,.
\end{equation}	
	An  obvious remark is that~$M_\beta$ is a stationary solution  of~(\ref{boltzmann}), and any function of the energy~$f_N \equiv F(H_N)$ is    a stationary solution of the Liouville equation~(\ref{Liouville}). 
In particular, for~$\beta>0$, the Gibbs measure with distribution in~${\mathbf T}^{dN} \times \R^{dN}$ defined by
%	are obtained by taking $F_\beta(x) = \exp (-\beta x)$: we define, for~$\beta>0$ given,
		\begin{equation}
	\label{Gibbs}
	M_{N,\beta} (Z_N) : =\frac1{\cZ_{N }}  \left(\frac\beta{2\pi}\right)^\frac{dN}2\exp (-\beta H_N(V_N)) \, \indc_{ {\mathcal D}_\eps^N  }(Z_N)
	= \frac1{\cZ_{N }}  \, \indc_{ {\mathcal D}_\eps^N  } (Z_N)
 M_\beta^{\otimes N}(V_N)  
\end{equation}
where the partition function $\cZ_{N }$ is the normalization factor
		\begin{equation}
	\label{Gibbsnormalization}
	\begin{aligned}
	\cZ_{N } & := \int_{\T^{dN} \times \R^{dN} } \indc_{ {\mathcal D}_\eps^N  } (Z_N)
 M_\beta^{\otimes N}(V_N)  
 \, dZ_N 
	=  \int_{\T^{dN}  }  \prod_{1 \leq i \neq j \leq N}\indc_{ |x_i-x_j| > \eps }  \, dX_N\,,
	\end{aligned}
	\end{equation}
is an invariant measure for the gas dynamics.  	
	
\medskip

	\noindent In order to obtain the convergence for long times, a natural idea is to ``weakly" perturb the   equilibrium state~$ M_{N,\beta}$, by modifying the distribution of  one particle.
	In other words, we shall  describe the dynamics of a tagged particle in a background initially  at equilibrium. 
Actually this is the reason for placing the study in a bounded domain, in order for~$M_{N,\beta}$ to be  integrable in the whole phase space. Moreover we have restricted our attention to the case of a torus in order to avoid pathologies related to boundary effects, and complicated free dynamics. 
	
\medskip

	\noindent 	The strategy of perturbating $ M_{N,\beta}$  is classical in probability theory; following this strategy
	\begin{itemize}
	\item 
	we lose asymptotically the nonlinear coupling: we thus expect to get a linear equation for the distribution of the tagged particle;
	
	\item we also lose the feedback of the tagged particles on the background: since this background is constituted of $N\gg 1$ indistinguishable particles, the  momentum and energy exchange with the tagged particle has  a very small effect on each one of these indistinguishable particles and thus does not modify on average the background distribution. As a consequence, the limiting equation  for the distribution of the tagged particle should be non conservative.
	\end{itemize}
	
		\noindent What we shall actually prove  is that the limiting dynamics is governed by the linear Boltzmann equation~(\ref{linear-boltzmann}) with hard-sphere cross-section.

%\begin{Rmk}
%	In order to retrieve the whole effect of the initial perturbation (including the feedback on the background), we should look at the density of a typical particle, which is a fluctuation of order $1/N$ around the equilibrium $M_\beta$
%	$$
% f_N^0 (Z_N) :=  M_{N,\beta} (Z_N) \big( 1 + \frac1N \sum_{i=1}^N\rho^0(z_i) \big) \, , \quad \int \rho^0(z_i) M_\beta (v_i)\, dz_i = 0 \, .
% 	$$
%	 In this case, we expect the dynamics to be well approximated by the linearized Boltzmann equation
%\begin{equation}
%\label{eq: linearise}
%\partial_t \varphi +v\cdot \nabla_x \varphi  = -\alpha  L (\varphi)
%\end{equation}
%with
%$$
%L (\varphi) :=\iint [\varphi(v) + \varphi (v_1) -\varphi(v^*)-\varphi (v^*_1) ] M_\beta (v_1)  \,b(v-v_1,\omega) \, dv_1 d\omega \, .
%$$
%However it is much more difficult to obtain the equation for the fluctuation because there is no easy counterpart of the uniform a priori bounds coming from the maximum principle \cite{BGSRprepa}.
%	\end{Rmk}

\subsection{Main results}

\noindent For the sake of simplicity, we consider only one tagged particle which will be labeled by 1 with coordinates $z_1 = (x_1,v_1)$. 
The initial data is a perturbation of the equilibrium density \eqref{Gibbs} only with respect to the position 
$x_1$ of the tagged particle.
Consider~$\rho^0$ a continuous density of probability   on $\mathbf T^d$ and define
\begin{equation}
\label{initial}
 f_N^0 (Z_N) :=  M_{N,\beta} (Z_N) \rho^0(x_1)  	 \, .	 
\end{equation}
Note that the distribution $f_N^0$ is normalized by 1 in~$L^1({\mathbf T}^{dN} \times {\mathbf R}^{dN} )$ thanks to the translation invariance of ${\mathbf T}^d$ and that $\displaystyle \int_{{\mathbf T}^d} \rho^0(x) dx = 1$.

\medskip
\noindent The main result of our study is the following statement.
\begin{Thm}\label{long-time}
Consider the initial distribution
$f_N^0$ defined in \eqref{initial}.
%\begin{equation}\label{initialdataparticletoheat}
%\begin{aligned}
% f_N^0 (Z_N) :=  {   {\mathcal Z}_{N}^{-1} }  { {\mathbf 1}_{ {\mathcal D}_\varepsilon^N } (X_N) } \varphi^0( x_1) 
% M_\beta^{\otimes N}(V_N)    \, , \\
% \mbox{with} \quad   {\mathcal Z}_{N} := \int  { {\mathbf 1}_{ {\mathcal D}_ \varepsilon ^N } (X_N) } \varphi^0( x_1) M_\beta^{\otimes N}(V_N)    \, dZ_N \, , 
%\end{aligned}
%\end{equation}
Then the distribution~$f_N^{(1)}(t, x,v)$ of the tagged particle 
is close to $M_\beta(v) \varphi_\alpha  ( t , x,v) $, where $\varphi_\alpha  ( t , x,v) $ is the solution of the linear Boltzmann equation~{\rm(\ref{linear-boltzmann})} with initial data~$\rho^0 (x_1)$ and hard-sphere cross section.
More precisely,  for all~$t>0$ and all~$\alpha > 1$,  in the limit~$N \to \infty$,~$N\varepsilon^{d-1} \alpha^{-1}= 1$, one has
\begin{equation}
\label{eq: approx temps gd}
\big\|  f_N^{(1)}(t,  x,v)- M_\beta(v) \varphi _\alpha ( t , x,v) 
\big\|_{L^\infty( {\mathbf  T}^d\times  {\mathbf R}^d)} \leq C   
\left[ \frac{ t \alpha}{ ({\log\log N})^{\frac{A-1}{A} } } \right]^{\frac{A^2}{A-1}} \, ,
\end{equation}
where $A\geq 2$ can be taken arbitrarily large, and~$C$ depends on~$A,\beta,d$ and~$\|\rho^0\|_{L^\infty
}$. 
\end{Thm}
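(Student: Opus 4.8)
The plan is to run the Lanford scheme in this linear regime and to push it to times of order $1/\alpha$ times a slowly diverging factor. First I would introduce the marginals $f_N^{(s)}$ of the solution $f_N$ of the Liouville equation~(\ref{Liouville}), write the associated BBGKY hierarchy, and expand $f_N^{(1)}$ by the iterated Duhamel formula as a signed series indexed by ``collision trees'', each term alternating free transport with the adjunction of one background particle at a collisional configuration. The same expansion applied to the linear Boltzmann hierarchy, whose solution stays factorized with first marginal $M_\beta(v)\varphi_\alpha(t,x,v)$, $\varphi_\alpha$ solving~(\ref{linear-boltzmann}), produces limiting ``Boltzmann pseudo-trajectories'', and the proof then reduces to comparing the two series term by term after a suitable truncation in the number of collisions.

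The decisive input is the uniform-in-time a priori bound of Proposition~\ref{apriori-est}, a consequence of the invariance of the Gibbs measure $M_{N,\beta}$, which controls every marginal by $C^s M_\beta^{\otimes s}$ independently of time. This lets me bound the tail of the Duhamel series: the contribution of trees with more than $n$ collisions is at most $(Ct\alpha)^n/n!$ up to harmless factors, hence $o(1)$ once $n$ is chosen to grow slowly with $N$. Among trees with at most $n$ collisions I then discard the pathological ones --- those featuring a recollision, a collision parameter in a small ``bad'' set, or collisions so concentrated in time that the branching is super-exponential. The novelty is to make these branching estimates explicit: partitioning $[0,t]$ into a growing number of sub-intervals and forbidding more than a prescribed number of collisions on each, one shows the excluded trees have total measure $o(1)$; optimizing the number of slices against the collision threshold is precisely what produces the $\log\log N$ and the exponents involving $A$ in~(\ref{eq: approx temps gd}).

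For the surviving good trees one shows that, as $\eps\to0$ with $N\eps^{d-1}=\alpha$, the BBGKY pseudo-trajectory converges to the Boltzmann pseudo-trajectory: the scattering map is continuous, the adjoined positions concentrate on the sphere of radius $\eps$, and the exclusion of recollisions forces the two flows to shadow each other. Combined with the convergence of the $s$-particle marginal of $f_N^0$ to $\rho^0(x_1)M_\beta^{\otimes s}$, this gives termwise convergence with a quantitative rate of order $\eps$ times combinatorial factors. Summing the truncation error, the pathological-tree error and the termwise error, and optimizing over $n$ and over the number of time slices, yields the estimate in~(\ref{eq: approx temps gd}).

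The main obstacle is exactly the control of the pathological trees on the long interval $[0,t]$ when $t\alpha$ is large. Naive Lanford estimates for the measure of bad configurations lose a factor $e^{Ct\alpha}$, which swamps the $o(\eps)$ gain and would cap the time by a constant. The way out is to use Proposition~\ref{apriori-est} to trade that exponential for the far milder $\log\log N$ budget: trees branching faster than allowed have negligible probability, so effectively only trees with a controlled, slowly growing number of collisions contribute, and for those the recollision cross-sections can be summed uniformly in time. Making all constants explicit in $A$, $\beta$, $d$ and $\|\rho^0\|_{L^\infty}$, and verifying that the geometric recollision estimates are uniform over $[0,t]$, is where the bulk of the technical work lies.
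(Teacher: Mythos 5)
Your proposal is correct and follows essentially the same route as the paper: iterated Duhamel expansions of the BBGKY and Boltzmann hierarchies, the uniform-in-time a priori bound coming from the invariant Gibbs measure (Proposition~\ref{apriori-est}), pruning of super-exponentially branching trees by slicing $[0,t]$ into sub-intervals with prescribed collision thresholds, geometric exclusion of recollisions via small bad sets, and termwise comparison of the BBGKY and Boltzmann pseudo-trajectories, with the optimization over the number of slices and the thresholds producing the $\log\log N$ rate. One small caveat: the tail of a single global truncation is not of size $(Ct\alpha)^n/n!$ (the $1/n!$ from the time simplex is consumed by the roughly $n^n$ choices of collision partners, leaving $(Ct\alpha)^n$, which is not small once $t\alpha>1$), and this is precisely why the time-slicing with thresholds $n_k=A^k$ that you invoke afterwards is the essential device, exactly as in the paper.
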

\noindent In  \cite{beijeren, LS2}, the linear Boltzmann equation was derived for any time $t>0$ (independent of~$N$).
In comparison, our approach leads to quantitative estimates on the convergence up to times diverging when $N \to \infty$. As we shall see, this is the key to derive the diffusive limit in Theorem \ref{brownien}.
Theorem~\ref{long-time}  proves that the linear Boltzmann equation is a good asymptotics of the hard-sphere dynamics, even for large concentrations $\alpha$ and long times $t$.  It further provides a rather good estimate on the approximation error.
Up to a suitable rescaling of  time, we can therefore obtain diffusive limits. 

\medskip

\noindent  In the macroscopic limit, the   trajectory of the tagged particle is defined by
\begin{equation}
\label{eq: tagged}
\brown (\tau) := x_1 \big( \alpha \tau \big) \in \T^d . 
\end{equation}
The distribution of $\brown (\tau)$ is given by $ f_N^{(1)}(\alpha \tau , x,v)$.
In the following, $\tau$ represents the macroscopic time scale.

\begin{Thm}
\label{brownien}
Consider~$N$ hard spheres on the space~${\mathbf T}^d \times {\mathbf R}^d$, initially distributed 
according to  $f_N^0$ defined in \eqref{initial}. Assume that~$\rho^0 $ belongs to~$C^0(\T^d)$.
Then the distribution~$f_N^{(1)}(\alpha \tau , x,v)$ remains close for the~$L^\infty$-norm to~$\rho( \tau, x) M_\beta(v) $ where~$\rho( \tau, x)$ is the solution of the linear heat equation
\begin{equation}\label{heat}
\partial_\tau \rho - \kappa_\beta \Delta_x \rho = 0 \quad \mbox{in} \quad   {\mathbf T}^d \, , \quad \rho_{|\tau = 0} = \rho^0 \, ,
\end{equation}
and the diffusion coefficient $\kappa_\beta$ is given by
\begin{equation*}
%\label{kappa-def0}
\kappa_\beta: = \frac{1}{d}  \int_{\R^d}  v \mathcal{L}^{-1} v \; M_\beta(v) dv ,
%= \frac{1}{d} \int_{\R^d}  \gamma (|v|) |v|^2 \, M_\beta(v) dv \, ,
\end{equation*}
where $ \mathcal{L}$ is the linear Boltzmann operator \eqref{linear-boltzmann} and $\cL^{-1}$ is its pseudo-inverse defined on~$(\Ker \cL)^\perp$  (see also {\rm(\ref{kappa-def})}). 
More precisely,
\begin{equation}\label{limitfN1toheat}
\big \|  f_N^{(1)} (\alpha \tau, x , v) -  \rho (\tau, x)  M_\beta (v) \big\|_{ L^\infty (  [0,T]\times   {\mathbf T}^d \times {\mathbf R}^d)} 
\to 0
\end{equation}
in the limit~$N\to \infty$,   with $\alpha=N \varepsilon^{d-1}$  going to infinity much slower than~$\sqrt{\log \log N}$.  

\noindent In the same asymptotic regime,  the process
$
\displaystyle \Xi (\tau) = x_1 (\alpha \tau )
$  associated with the tagged particle
converges in law towards a Brownian motion of variance~$\kappa_\beta$, initially distributed under the measure~$\rho^0$.

\end{Thm}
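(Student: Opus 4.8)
The plan is to deduce Theorem~\ref{brownien} from the quantitative kinetic approximation already established in Theorem~\ref{long-time}, combined with the classical diffusive limit of the linear Boltzmann equation. The whole argument splits into two independent pieces that are then glued together: (i) a ``hard'' microscopic step, namely that the rescaled one-particle density $f_N^{(1)}(\alpha\tau, x, v)$ is uniformly close to $M_\beta(v)\,\varphi_\alpha(\alpha\tau, x, v)$, and (ii) a ``soft'' mesoscopic step, namely that $\varphi_\alpha(\alpha\tau, \cdot)$ converges to $\rho(\tau,\cdot)$, the solution of the heat equation \eqref{heat}.

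For step (i), I would apply the bound \eqref{eq: approx temps gd} at time $t = \alpha\tau$, so that $t\alpha = \alpha^2\tau$. The right-hand side becomes, up to constants,
\begin{equation*}
\Big[ \frac{\alpha^2\tau}{(\log\log N)^{\frac{A-1}{A}}} \Big]^{\frac{A^2}{A-1}},
\end{equation*}
which tends to $0$ uniformly for $\tau\in[0,T]$ precisely when $\alpha^2$ is $o\big((\log\log N)^{(A-1)/A}\big)$; since $A$ may be chosen arbitrarily large, $(A-1)/A$ may be taken arbitrarily close to $1$, so it suffices that $\alpha = N\varepsilon^{d-1}$ grows much slower than $\sqrt{\log\log N}$, which is exactly the hypothesis. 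This shows
\begin{equation*}
\big\| f_N^{(1)}(\alpha\tau, x, v) - M_\beta(v)\,\varphi_\alpha(\alpha\tau, x, v) \big\|_{L^\infty([0,T]\times\T^d\times\R^d)} \to 0.
\end{equation*}

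For step (ii), I would invoke the classical theory of the diffusive (hydrodynamic) limit for the linear Boltzmann equation, as in \cite{KLO}: rescaling $\varphi_\alpha(\alpha\tau, x, v)$, writing a Hilbert/Chapman–Enskog expansion $\varphi_\alpha \approx \rho(\tau,x) + \alpha^{-1}\varphi_1 + \cdots$, one finds at leading order that $\rho$ must be independent of $v$ (it lies in $\Ker\mathcal{L}$, spanned by constants in $v$ against $M_\beta$), and at next order the solvability (Fredholm) condition against $\Ker\mathcal{L}$ forces $\partial_\tau\rho = \kappa_\beta\Delta_x\rho$ with $\kappa_\beta = \frac1d\int_{\R^d} v\cdot\mathcal{L}^{-1}v\, M_\beta(v)\,dv$; the corrector $\varphi_1 = -\mathcal{L}^{-1}(v\cdot\nabla_x\rho)$ is well-defined on $(\Ker\mathcal{L})^\perp$ because $v\cdot\nabla_x\rho \perp \Ker\mathcal{L}$ by oddness in $v$. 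Since $\rho^0$ is merely $C^0(\T^d)$, I would run this first for a smooth approximation and then pass to the limit using the contraction property of the heat semigroup on $C^0(\T^d)$ and the $L^\infty$-stability of the linear Boltzmann semigroup. Combining (i) and (ii) by the triangle inequality gives \eqref{limitfN1toheat}. Finally, the convergence in law of $\Xi(\tau) = x_1(\alpha\tau)$ to a Brownian motion of variance $\kappa_\beta$ follows because the finite-dimensional distributions and the Markovian structure of the tagged-particle trajectory are controlled by the same collision-tree estimates underlying Theorem~\ref{long-time} (applied to finitely many times), yielding tightness in path space and identification of the limit via the martingale problem for $\kappa_\beta\Delta_x$; the initial law is $\rho^0$ by construction of $f_N^0$.

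The main obstacle is the interplay between the two limits: Theorem~\ref{long-time} controls the error only for $t\alpha$ small compared to a power of $\log\log N$, whereas the diffusive scaling demands $t = \alpha\tau$ with $\alpha\to\infty$, so one is really exploiting the quantitative rate — this is why the growth condition on $\alpha$ is so restrictive, and any weakening would require sharpening the pathological-collision-tree estimates. A secondary technical point is upgrading the mean-density convergence \eqref{limitfN1toheat} to convergence \emph{in law} of the process: this requires reinstating the joint distribution of the tagged particle at several times (not just the one-time marginal), i.e. rerunning the pseudo-trajectory expansion and its error control for multi-time observables, together with a standard tightness argument (Aldous or Kolmogorov criterion) in $C([0,T];\T^d)$.
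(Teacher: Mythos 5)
Most of your outline coincides with the paper's route. For the density statement, the paper does exactly what you propose: it applies the quantitative bound \eqref{eq: approx temps gd} at $t=\alpha\tau$ (so the error is $[\alpha^2\tau/(\log\log N)^{(A-1)/A}]^{A^2/(A-1)}$, small when $\alpha\ll\sqrt{\log\log N}$), and then runs a Hilbert expansion for the linear Boltzmann equation, using the Fredholm property of $\cL$, the corrector $-\,\cL^{-1}(v\cdot\nabla_x\rho)$, the solvability condition giving $\kappa_\beta$, the maximum principle for the kinetic equation, and a regularization of $\rho^0$ (to $C^4$) removed at the end by the heat semigroup. For the law of the process, your idea of ``rerunning the pseudo-trajectory expansion for multi-time observables'' is also the paper's: it introduces biased distributions $f_{N,H_\ell}$ weighted by $h_1(x_1(t_1))\cdots h_\ell(x_1(t_\ell))$, observes that the maximum principle still holds up to a factor $m^\ell$ so the pruning and coupling of Theorem~\ref{long-time} go through, and compares term by term with the analogous series for the Markov process of generator $\alpha\cL$; the finite-dimensional limits are then identified through the invariance principle of \cite{KLO} for that Markov process (no martingale problem is needed once the finite-dimensional distributions converge), with $\kappa_\beta$ appearing as the limiting variance.

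The genuine gap is your tightness step. A ``standard Aldous or Kolmogorov criterion'' cannot be applied as stated: the kinetic comparison yields, for any finite family of times, an \emph{additive} error of size $[\alpha^2\tau/(\log\log N)^{(A-1)/A}]^{A^2/(A-1)}$ which does not shrink with the time increment, so increment moment bounds of Kolmogorov type cannot be transferred from the limit process uniformly over small time gaps; moreover the hard-sphere velocities are unbounded, so between two sampled times the tagged particle can a priori travel far, and nothing in the one- or multi-time marginal estimates controls this. The paper proves \eqref{eq: tightness criterion} by a two-scale argument: it samples the path at macroscopic times spaced by $u_N/\alpha$ with $u_N=1/\log N$; inside each microscopic interval it bounds the oscillation by $\int_{t_i}^{t_{i+1}}|v_1(s')|\,ds'$ and uses the maximum principle $f_N\le\|\rho^0\|_{L^\infty}M_{N,\beta}$ together with an exponential Chebyshev and Jensen estimate under the invariant measure (following \cite{alexander}), giving a total contribution of order $\ell_N\,e^{-\petit\log N}\to0$; at the sampled times it transfers the tightness of the limiting Markov process (\cite{KLO}) through the weighted-tree comparison, which must be, and is, uniform in the number $\ell_N\to\infty$ of sampled times because the cut-off functions are bounded by $1$ so the maximum principle applies uniformly in $\ell_N$. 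Without these two ingredients (the microscopic oscillation bound under the invariant measure and the uniformity of the comparison in the number of sampling times), the passage from convergence of finite-dimensional marginals to convergence in law in path space is not justified.
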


\noindent The Boltzmann-Grad scaling $\alpha=N \varepsilon^{d-1}$ is chosen  such that the mean free path is of order~$1/\alpha$, i.e. that a particle has on average $\alpha$ collisions per unit time.  This explains why 
in~\eqref{eq: tagged}, the position of the particle is not rescaled. Indeed over a time scale $\alpha \tau$ a particle will encounter $\alpha^2 \tau$ collisions which   is the correct balance for a diffusive limit.
In other words, one can think of $\alpha$ as a parameter tuning the density of the background particles.
{The positions and velocities  are not rescaled with $\alpha$ and are always at the macroscopic scale.}

\subsection{Generalizations}

For the sake of clarity, Theorem \ref{brownien} has been stated in the simplest framework. We   mention below several extensions which can be deduced in a straightforward way from the  proof of Theorem \ref{brownien}.

\medskip

\noindent
{\it Several tagged particles :}

\noindent The dynamics of a finite number of tagged particles can be followed and one can show that asymptotically, they converge to independent Brownian motions. 
This gives an answer to a conjecture raised by Lebowitz and Spohn \cite{LS1} on the diffusion of colored particles in a fluid.

\medskip

\noindent
{\it Interaction potential :}

\noindent Following the arguments in \cite{GSRT,PSS}, the behavior of a tagged particle in a gas with an interaction potential  can also be treated. %, under the assumptions of Theorem~\ref{lanford-thm}.

\medskip

\noindent
{\it Initial data :}

\noindent The perturbation on the initial particle could depend on $z_1 = (x_1, v_1)$ instead of depending only on the position $x_1$. The comparison argument to the linear Boltzmann equation is identical, but the derivation of the  diffusive behavior in Section 
\ref{subsec: Convergence to the heat equation} should be modified to show the relaxation of the velocity to a Maxwellian at the initial stage
(see Remark~\ref{rem-truc}).
\smallskip

\noindent By considering an initial data of the form 
\begin{equation}
\label{dirac-approx}
\rho_\alpha^0(x_1) = \alpha^{d \zeta} \; \rho^0  \big( \alpha^\zeta x_1\big  ) 
\qquad \hbox{ with } \quad  \zeta \ll 1 
\end{equation}
the tagged particle localizes when $\alpha$ goes to infinity.
The analysis can be extended to this class of initial data and leads, in the macroscopic limit, 
to a Brownian motion starting initially from a Dirac mass.

\medskip

\noindent
{\it Scalings :}

\noindent We have chosen here to work with macroscopic variables $(x,v)$, i.e. to rescale the particle concentration of the background and to dilate the time with a factor $\alpha$. However, the diffusive limit can be obtained by many other equivalent scalings involving the space  variable. In particular, one could have considered a domain $[0,\lambda]^d$ with a size $\lambda$ growing and a Boltzmann-Grad scaling $(N/\lambda^d) \eps^{d-1} =1$. Rescaling   space by a factor $\lambda$ and   time by $\lambda^2 \ll \log \log N$ would have led to the same diffusive limit.
In fact, one only needs the Knudsen number   to be small and of the same order as the Strouhal number~\cite{BGL1,LSR}.

\subsection{Structure of the paper}   

\noindent 
 Theorem \ref{brownien} is a consequence of  Theorem~\ref{long-time}, as explained in Section~\ref{proofbrownien}.  The core of our study is therefore the proof of Theorem~\ref{long-time}, which  relies on a comparison of the particle 
system to a limit system known as Boltzmann hierarchy. This hierarchy is obtained formally in Section~\ref{formal} from the hierarchy of equations satisfied by the marginals of~$f_N$, known as the BBGKY hierarchy (which is   introduced in Section~\ref{formal}). Section~\ref{controlbranch} is devoted to the control of the branching process that can be associated with the hierarchies, and in particular with the elimination of super-exponential trees;  the specificity of the linear framework is crucial  in this step, as it makes it possible  to compare the solution with the invariant measure globally in time. The actual proof of the convergence of the 
BBGKY hierarchy towards the Boltzmann hierarchy, on times diverging with~$N$, can be found in Section~\ref{endproofsection}.  

\noindent Some more technical estimates are postponed to Appendix A and B.

 %%%%%%%%%%%%%%%%%%%%%%%%%%%%%%%%%%%%%%%%%%%%%%%%
%%%%%%%%%%%%%%%%%%%%%%%%%%%%%%%%%%%%%%%%%%%%%%%%
\section{Formal derivation of the low density limit}
\label{formal}

Our starting point to study the low density limit is the Liouville equation (\ref{Liouville}) and its projection on the first marginal
 		$$ 
	f_N^{(1)} (t,z_1): = \int f_N(t,Z_N) dz_{2}\dots dz_N\,.
	$$
	Since it does not satisfy a closed equation, we have to consider the whole BBGKY hierarchy (see Paragraph 3.1).
The main difference with the usual strategy to prove convergence is that the symmetry is partially broken due to the fact that one particle is  {distinguished} from the others. In other words~$	f_{N|t=0}$
	is symmetric with respect to $z_2,\dots z_N$ but not to~$z_1$, and this property is preserved by the dynamics.
	
	\noindent 
More precisely we shall see that the specific form of the initial data (see Paragraph 3.2) implies that asymptotically we  have the following closure 
	$$ 
	f_N^{(2)} (t,z_1, z_2): = \int f_N(t,Z_N) dz_{3}\dots dz_N \sim  f_N^{(1)} (t,z_1) M_\beta (v_2)
	\sim \varphi_\alpha (t,z_1)    M_\beta (v_1) M_\beta (v_2)  
	$$
where~$\varphi_\alpha$ satisfies the linear Boltzmann equation~(\ref{linear-boltzmann}) with initial data~$\rho^0$.  Thus the limiting hierarchy reduces to the linear Boltzmann equation (see Paragraph 3.3).

\subsection{The series expansion} 
	 	The quantities we shall consider are   the marginals 
			$$ 
	f_N^{(s)} (t,Z_s): = \int f_N(t,Z_N) dz_{s+1}\dots dz_N
	$$
	so  $f_N^{(1)}$ is exactly the distribution of the tagged particle, and $f_N^{(s)}$ is the correlation between this tagged particle and $(s-1)$ particles of the background.

\noindent A formal computation based on Green's formula  leads to the following  BBGKY hierarchy for~$s<N$
	\begin{equation}
\label{eq: BBGKY}
	(\d_t +\sum_{i=1}^s v_i\cdot \nabla_{x_i} ) f_N^{(s)} (t,Z_s) = \alpha \big( C_{s,s+1} f_N^{(s+1)}\big) (t,Z_s)
	\end{equation}
	on~$\cD_\eps^s$, 	with the boundary condition as in~(\ref{tracecondition})
	$$f_N^{(s)} (t,Z_s) = f_N^{(s)} (t,Z_s^*) \hbox{ on }\d D_\eps^{s +}(i,j)  \, .$$

\noindent 	The collision term is defined by
		\begin{equation}
	\label{BBGKYcollision}
		\begin{aligned}
	&\big( C_{s,s+1} f_N^{(s+1)}\big) (Z_s)	  :=   (N-s) \eps^{d-1} \alpha^{-1}\\
&\times \Big( \sum_{i=1}^s \int_{{\mathbf S}^{d-1} \times \R^ d}  f_N^{(s+1)}(\dots, x_i, v_i^*,\dots , x_i+\eps \nu, v^*_{s+1}) \Big((v_{s+1}-v_i) \cdot \nu\Big)_+ d\nu dv_{s+1}  \\
& \quad    - \sum_{i=1}^s \int_{{\mathbf S}^{d-1} \times \R^ d}    f_N^{(s+1)}(\dots, x_i, v_i,\dots , x_i+\eps \nu, v_{s+1}) \Big((v_{s+1}-v_i) \cdot \nu\Big)_- d\nu dv_{s+1} \Big)
\end{aligned}
	\end{equation}
	where~${\mathbf S}^{d-1} $ denotes the unit sphere in~$\R^d$.
	  Note that  the collision integral is split into two terms according to the sign of $(v_i-v_{s+1}) \cdot \nu $ and we used the trace condition on $\d {\mathcal D}_\eps^N$
		to  express all quantities in terms of pre-collisional configurations.
		
		\medskip

	\noindent The closure  for $s=N$ is given by the Liouville equation (\ref{Liouville}). 
\noindent Note that the classical symmetry arguments used to establish the BBGKY hierarchy, i.e. the evolution equations for the marginals $f_N^{(s)}(t,Z_s)$, only involve the particles we add by collisions to the sub-system~$Z_s$ under consideration. In particular, the equation in the BBGKY hierarchy will not be modified at all since - by convention - the tagged particle is labeled by~1 and always belongs to the sub-system under consideration.

	\medskip
	
	\noindent	Given the special role played by the initial data (which is the reference to determine the notion of pre-collisional and post-collisional configurations), it is then natural to express solutions of the BBGKY hierarchy  in terms of a series of operators applied to the initial marginals. The  starting point in Lanford's proof is therefore  the  iterated Duhamel formula
	\begin{equation}\label{Duhamel}
\begin{aligned}
 f^{(s)} _N(t) =\sum_{n=0}^{N-s}  \alpha^n  \int_0^t \int_0^{t_1}\dots  \int_0^{t_{n-1}}  {\bf S}_s(t-t_1) C_{s,s+1}  {\bf S}_{s+1}(t_1-t_2) C_{s+1,s+2}   \\
\dots  {\bf S}_{s+n}(t_n)     f^{(s+n)}_N(0) \: dt_{n} \dots dt_1 \, ,
\end{aligned}
\end{equation}
where  ${\bf S}_s$ denotes the group associated to free transport in $\cD_\eps^s$ with specular reflection on the boundary.

\noindent To simplify notations, we define the operators $Q_{s,s} (t) = {\bf S}_s (t)$
and for $n \geq 1$
\begin{equation}
\label{Q-def}
	\begin{aligned}
Q_{s,s+n} (t) :=    \int_0^t \int_0^{t_1}\dots  \int_0^{t_{n-1}}  {\bf S}_s(t-t_1) C_{s,s+1}  {\bf S}_{s+1}(t_1-t_2)  
  C_{s+1,s+2}\dots   {\bf S}_{s+n}(t_n)   \: dt_{n} \dots dt_1
	\end{aligned}
\end{equation}
so that
\begin{equation}
\label{iterated-Duhamel}
    f^{(s)} _N(t) =  \sum_{n=0}^{N-s} \alpha^n Q_{s,s+n} (t) f^{(s+n)}_N(0) \, .
\end{equation}

\begin{Rmk}\label{bbgky-def} 
\label{bbgky-def}
It is not  obvious that formula {\rm(\ref{iterated-Duhamel})} makes sense since
the transport operator~${\bf S}_{s+1}$  is defined only for almost all initial configurations, and 
the collision operator~$C_{s,s+1}$ is defined by some integrals on manifolds of codimension 1. This fact is analyzed in~\cite{simonella} and in the erratum of~\cite{GSRT}. 
In the following, we will rely on the estimates on the collision operator derived in~\cite{GSRT}. 
%The collision integral   is defined according to the following procedure:
%\begin{itemize}
%
%\item  We first note that $(Z_s, \nu, v_{s+1},t )$ is a system of coordinates on the neighborhood of any regular point of  the boundary $\d  \cD_\eps^{s+1,\pm} (i,s+1)$.
%Since the transpor~${\bf S}_{s+1}$ preserves the $L^\infty$ norm, for any function $\varphi_{s+1} \in L^\infty(  \cD_\eps^{s+1})$ with bounded support,
%up to some truncation $\chi_\delta^{i,\pm}$ avoiding pathological trajectories
%$$ \chi_\delta^{i,\pm} {\bf S}_{s+1} (t) \varphi_{s+1} \in L^\infty( \cD_\eps^s \times {\mathbf S}^{d-1} \times \R^d \times [0,T]).$$
%
%\item  Fubini's theorem and  Alexander's result~\cite{alexander} then show that, up to some truncation, each elementary term of the collision operator is well-defined (by partial integration with respect to the only variables $(\nu, v_{s+1})$)~: for any function $\varphi_{s+1} \in L^\infty( \cD_\eps^{s+1})$ with bounded support,
%$$C_{s,s+1}^{\delta, i,\pm} {\bf S}_{s+1} (t) \varphi_{s+1} \in L^\infty( \cD_\eps^s \times [0,T]) \, .$$
%
%
%\item We can finally prove that the contribution of pathological trajectories to the collision integral is small outside from a small measure subset of $\cD_\eps^s \times [0,T]$. This means that we can remove the truncation and get that for any function $\varphi_{s+1} \in L^\infty(  \cD_\eps^{s+1})$ with bounded support,
%$$C_{s,s+1}^{ i,\pm} {\bf S}_{s+1} (t) \varphi_{s+1} \in L^\infty( \cD_\eps^s \times [0,T]).$$
%
%\end{itemize}
%
%The assumption on the support is then removed by considering weighted $L^\infty$ norms.
\end{Rmk}

\subsection{Asymptotic factorization of the initial data}\label{sectioninitialdata}$ $ 
The effect of the exclusion in the equilibrium measure vanishes when $\eps$ goes to 0 and the particles become asymptotically independent in the following sense.
	\begin{Prop}\label{exclusion-prop}
	 Given $\beta>0$,  there is a constant~$C>0$ such that for any fixed $s\geq 1$, the marginal of order $s$
	\begin{equation}\label{defmarginal}
	M_{N,\beta}^{(s)}  (Z_s) :=\int 	M_{N,\beta} (Z_N) \, dz_{s+1} \dots dz_N
\end{equation}
	satisfies,  as $N\to \infty $ in the scaling $N \e^{d-1} \equiv \alpha \ll 1/ \eps$, 
\begin{equation}
\label{lambdadsmbetaN}
\Big| \left(   M_{N,\beta}^{(s)}  -  M_\beta^{\otimes s} \right) \indc_{{\mathcal D}_{\eps}^s}  \Big|
\leq C  ^s  \, \eps \alpha \,  M_\beta^{\otimes s} 	
\end{equation}
where  the Maxwellian distribution $M_\beta^{\otimes s}$ was introduced in \eqref{eq: Maxwellian}.
\end{Prop}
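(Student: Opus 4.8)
\noindent The plan is to reduce the claim to a purely geometric estimate on configurational partition functions, and then to control those by an elementary telescoping argument. The first step is to integrate out the velocities: since the exclusion indicator $\indc_{{\mathcal D}_\eps^N}$ depends only on the positions and the Gaussian weight factorizes, for $Z_s\in{\mathcal D}_\eps^s$ one obtains $M_{N,\beta}^{(s)}(Z_s)=M_\beta^{\otimes s}(V_s)\,I_{N,s}(X_s)/\cZ_N$, where $I_{N,s}(X_s)$ is the volume of the set of positions of the $N-s$ background particles which avoid one another \emph{and} avoid the $\eps$-balls centered at $x_1,\dots,x_s$; moreover $\cZ_N=\int_{\T^{ds}}\indc_{{\mathcal D}_\eps^s}(X_s)\,I_{N,s}(X_s)\,dX_s$. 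Thus the whole proposition reduces to showing $|I_{N,s}(X_s)/\cZ_N-1|\le C^s\eps\alpha$, uniformly over $X_s\in{\mathcal D}_\eps^s$.

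\noindent Next I would record the continuity of the partition functions $\cZ_k:=\int_{\T^{dk}}\prod_{1\le i<j\le k}\indc_{|x_i-x_j|>\eps}\,dX_k$ under adding or removing one particle: integrating out the last variable and bounding the forbidden region by the sum of ball volumes $\om_d\eps^d$ gives $(1-k\om_d\eps^d)\,\cZ_k\le\cZ_{k+1}\le\cZ_k$. Since $k\le N$ and $N\om_d\eps^d=\om_d\,\eps\alpha\to0$ \emph{precisely} in the regime $\alpha\ll1/\eps$, each consecutive ratio is $1+O(\eps\alpha)$, and a product of any $O(s)$ of them is $1+O(s\,\eps\alpha)$; similarly a direct union bound gives $\cZ_s=1+O(s^2\eps^d)=1+O(s^2\eps\alpha)$. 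The point worth stressing is that $\cZ_N$ itself need \emph{not} be close to $1$, so one must never compare $\cZ_N$ to the free-gas value $1$, only to $\cZ_{N-1},\dots,\cZ_{N-s-1}$, whose mutual ratios are controlled because the particle numbers differ by at most $O(s)$.

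\noindent The third step decouples the tagged block from the background. Trivially $I_{N,s}(X_s)\le\cZ_{N-s}$. Conversely, writing $U=U(X_s)$ for the union of the $s$ excluded balls and using $1-\prod_{j}\indc_{x_j\notin U}\le\sum_{j}\indc_{x_j\in U}$ together with the symmetry among the background particles, one gets $\cZ_{N-s}-I_{N,s}(X_s)\le(N-s)\,|U|\,\cZ_{N-s-1}\le s\,\om_d\,(N\eps^d)\,\cZ_{N-s-1}=s\,\om_d\,\eps\alpha\,\cZ_{N-s-1}$. Inserting this together with the ratio estimates from the previous step into the identity $\cZ_N=\cZ_{N-s}\int_{{\mathcal D}_\eps^s}(I_{N,s}/\cZ_{N-s})\,dX_s$ — which pins down $\cZ_{N-s}/\cZ_N$ once one knows $I_{N,s}/\cZ_{N-s}\in[1-Cs\eps\alpha,1]$ on ${\mathcal D}_\eps^s$, a set of volume $\cZ_s=1+O(s^2\eps\alpha)$ — yields $|I_{N,s}(X_s)/\cZ_N-1|\le C's^2\eps\alpha$; absorbing $s^2$ into $C^s$ for a suitable dimensional constant $C$ completes the argument.

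\noindent I do not expect a genuinely hard step: the whole proof is elementary geometry plus careful bookkeeping of partition-function ratios. The only delicate point — the one flagged above — is that the estimate must be organized so that only $O(s)$ consecutive partition functions ever enter, each contributing an error $O(\eps\alpha)$, which is why the total error stays $O(\eps\alpha)$ (up to a constant depending on $s$) rather than degenerating as $N\to\infty$.
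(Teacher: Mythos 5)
Your proposal is correct and follows essentially the same route as the paper's Appendix A: the marginal is written as $M_\beta^{\otimes s}$ times a ratio of configurational integrals, the partition functions $\cZ_k$ are compared by integrating out one particle at a time (giving $(1-k\kappa_d\eps^d)\cZ_k\le\cZ_{k+1}\le\cZ_k$), and the cross-exclusion defect $\cZ_{N-s}-I_{N,s}$ is bounded by the same symmetry/union-bound argument, yielding $C^s\eps\alpha$. The only cosmetic difference is that you pin down $\cZ_{N-s}/\cZ_N$ through the identity $\cZ_N=\cZ_{N-s}\int_{\cD_\eps^s}(I_{N,s}/\cZ_{N-s})\,dX_s$ (picking up a harmless extra factor $s$), whereas the paper telescopes the ratio directly.
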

\noindent	The proof of Proposition \ref{exclusion-prop}, by now classical, is recalled in Appendix A for the sake of completeness.

\noindent As a consequence of Proposition \ref{exclusion-prop}, the initial data is asymptotically close to a product measure: the following result is a direct corollary of Proposition \ref{exclusion-prop}.
\begin{Prop}\label{exclusion-prop2}
For the initial data~$f^0_{N } $ given  in~{\rm(\ref{initial})}, define the marginal of order~$s$  $$
f_{N }^{0(s)}  (Z_s) :=\int 	f^0_{N } (Z_N) \, dz_{s+1} \dots dz_N
= \rho^0 (x_1) M_{N,\beta}^{(s)}  (Z_s) \, .
$$
There is a constant~$C>0$ such that  as $N\to \infty $ in the scaling $N \e^{d-1}= \alpha \ll 1 /\eps $
	$$
			\Big| \left(f_{N }^{0(s)}  -g^{0(s)}\right) \indc_{{\mathcal D}_{\eps}^s}
	\Big|  \leq C ^s \eps \alpha    M_\beta ^{\otimes s}  \|\rho^0\|_{L^\infty}  \,,
			$$
where   $g^{0(s)}$ is defined by
\begin{equation}\label{defmarginalsboltzmann}
g^{0(s)}  (Z_s):= \rho^0 (x_1)  M_\beta^{\otimes s} 	
(V_s)\,.
\end{equation}
\end{Prop}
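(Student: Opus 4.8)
The plan is to deduce Proposition~\ref{exclusion-prop2} directly from Proposition~\ref{exclusion-prop} by exploiting the multiplicative structure of the initial data. Since $f^0_N(Z_N) = M_{N,\beta}(Z_N)\rho^0(x_1)$ and the variable $x_1$ is untouched by the integration $\int \,dz_{s+1}\dots dz_N$, the factor $\rho^0(x_1)$ pulls straight out of the integral, giving the stated identity
$$
f_N^{0(s)}(Z_s) = \rho^0(x_1)\, M_{N,\beta}^{(s)}(Z_s)\,.
$$
This is the only structural observation needed; after that the estimate is pure bookkeeping.

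Next I would write, on the support of $\indc_{\cD_\eps^s}$,
$$
\left(f_N^{0(s)} - g^{0(s)}\right)\indc_{\cD_\eps^s}
= \rho^0(x_1)\left(M_{N,\beta}^{(s)} - M_\beta^{\otimes s}\right)\indc_{\cD_\eps^s}\,,
$$
using the definitions $f_N^{0(s)} = \rho^0(x_1) M_{N,\beta}^{(s)}$ and $g^{0(s)} = \rho^0(x_1) M_\beta^{\otimes s}$. Taking absolute values and pulling out $\|\rho^0\|_{L^\infty}$ (which is finite because $\rho^0$ is a continuous probability density on the compact torus $\T^d$), we get
$$
\left|\left(f_N^{0(s)} - g^{0(s)}\right)\indc_{\cD_\eps^s}\right|
\leq \|\rho^0\|_{L^\infty}\left|\left(M_{N,\beta}^{(s)} - M_\beta^{\otimes s}\right)\indc_{\cD_\eps^s}\right|\,.
$$
Now apply Proposition~\ref{exclusion-prop} to bound the right-hand side by $\|\rho^0\|_{L^\infty}\, C^s\,\eps\alpha\, M_\beta^{\otimes s}$, in the same scaling regime $N\eps^{d-1} = \alpha \ll 1/\eps$, which is exactly the claimed inequality. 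The constant $C$ is inherited verbatim from Proposition~\ref{exclusion-prop}.

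Frankly there is no real obstacle here: the proposition is a one-line corollary, and the ``main difficulty'' is simply making sure the translation-invariance normalization is consistent — i.e.\ checking that the identity $f_N^{0(s)} = \rho^0(x_1) M_{N,\beta}^{(s)}$ is the correct marginal, which follows immediately from Fubini since $\rho^0(x_1)$ does not depend on the integrated variables. If one wanted to be scrupulous, one could also note that $M_{N,\beta}^{(s)}$ and $M_\beta^{\otimes s}$ are both supported (resp.\ nonnegative) in a way compatible with the indicator $\indc_{\cD_\eps^s}$, so that multiplying the Proposition~\ref{exclusion-prop} bound by the nonnegative factor $\rho^0(x_1)$ preserves the inequality pointwise. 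That completes the proof.
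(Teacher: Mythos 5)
Your argument is correct and is exactly what the paper intends: it states Proposition~\ref{exclusion-prop2} as a direct corollary of Proposition~\ref{exclusion-prop}, and your factoring of $\rho^0(x_1)$ out of the marginal and bounding it by $\|\rho^0\|_{L^\infty}$ is precisely that corollary.
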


\subsection{The limiting hierarchy and the linear Boltzmann equation}

To obtain the Boltzmann hierarchy we start with the expansion~(\ref{iterated-Duhamel}) and compute the formal  limit of the collision operator~$ Q_{s,s+n}$ when~$\eps$ goes to~$ 0$. Recalling that~$(N-s) \eps^{d-1} \alpha^{-1} \sim 1$, it is  given by
$$
	\begin{aligned}
Q^0_{s,s+n} (t) :=   \int_0^t \int_0^{t_1}\dots  \int_0^{t_{n-1}}  {\bf S}^0_s(t-t_1) C^{0}_{s,s+1}  {\bf S}^0_{s+1}(t_1-t_2)  
  C^{0}_{s+1,s+2}\dots   {\bf S}^0_{s+n}(t_n)   \: dt_{n} \dots dt_1
	\end{aligned}
$$
where~$ {\bf S}^0_s$ denotes the free flow of~$s$ particles on~${\mathbf T}^{ds} \times \R^{ds}$, and $C^{0}_{s,s+1}$ are   the limit collision operators defined by
\begin{equation}
	\label{Boltzmanncollisionoperators}
	\begin{aligned}
	\big( C_{s,s+1}^{0} g ^{(s+1)}\big) (Z_s)      &:=   \sum_{i=1}^s \int  g^{(s+1)}(\dots, x_i, v_i^*,\dots , x_i , v^*_{s+1}) \Big((v_{s+1}-v_i) \cdot \nu\Big)_+ d\nu dv_{s+1} \\
&-  \sum_{i=1}^s \int  g^{(s+1)}(\dots, x_i, v_i,\dots , x_i , v_{s+1}) \Big((v_{s+1}-v_i) \cdot \nu\Big)_- d\nu dv_{s+1} \, .
	\end{aligned}
	\end{equation}
	Then the iterated Duhamel formula for the Boltzmann hierarchy takes the form
\begin{equation}
\label{iterated-Duhamelboltz}
 \forall s \geq 1 \, ,   \quad  g_\alpha ^{(s)} (t) =  \sum_{n\geq0}\alpha^n Q^0_{s,s+n} (t) g^{0(s+n)} \, .
\end{equation}

\begin{Rmk}\label{boltz-def}
%In the Boltzmann hierarchy, the collision operators are defined by integrals on manifolds of codimension $d$, so   the arguments presented in Remark {\rm\ref{bbgky-def}} cannot be applied anymore.
%We shall therefore require that the functions $\big(g_\alpha ^{(s)} \big)_{s \geq 1}$ are continuous, which is possible since free transport preserves continuity on~$\T^d \times \R^d$. 
In the Boltzmann hierarchy, the collision operators are defined by integrals on manifolds of codimension $d$, so we shall require that the functions $\big(g_\alpha ^{(s)} \big)_{s \geq 1}$ are continuous, which is possible since free transport preserves continuity on~$\T^d \times \R^d$.
\end{Rmk}

\noindent Consider the initial data~(\ref{defmarginalsboltzmann}). Then the family $(g_\alpha^{(s)} )_{s\geq 1}$ defined  by 
\begin{equation}\label{solutionBoltzmannlinear}
g_\alpha ^{(s)} (t,Z_s) :=\varphi_\alpha (t,z_1)    M_\beta^{\otimes s} (V_s)  
\end{equation}
is a solution to the Boltzmann hierarchy with initial data~$g^{0(s)}  $ since~$\varphi_\alpha$ satisfies the linear Boltzmann equation~(\ref{linear-boltzmann}) with initial data~$\rho^0$.  

\noindent
We insist   that the $g_\alpha^{(s)} $ are not defined as the marginals of some $N$-particle density.

\begin{Rmk}\label{boltz-uniq}
Note  that the estimates established in the next section imply actually that~$(g_\alpha^{(s)} )_{s\geq 1}$ is the unique  solution to the Boltzmann hierarchy (see~\cite{GSRT}).

\noindent
Furthermore the maximum principle for the linear Boltzmann equation leads to the following estimate
$$\sup _{t \geq 0}  \varphi_\alpha  (t,z_1) \leq \| \rho^0\|_{L^\infty}    \,.$$
\end{Rmk}

\noindent
In the following for the sake of simplicity we write~$g_\alpha:=g_\alpha ^{(1)}.$

%%%%%%%%%%%%%%%%%%%%%%%%%%%%%%%%%%%%%%%%%%%%%%%%%%%%%%%%%%%%%%%%%%%%%%%%%%%%%%%%%%%%%%%%%%
%%%%%%%%%%%%%%%%%%%%%%%%%%%%%%%%%%%%%%%%%%%%%%%%%%%%%%%%%%%%%%%%%%%%%%%%%%%%%%%%%%%%%%%%%%

\section{Control of the branching process}\label{controlbranch}
The restriction on the time of validity~$T^*/\alpha$ of  Lanford's convergence proof   (determined by a weighted norm of the initial data) is based on the elimination of ``pathological" collision trees, defined by a too large number of branches created in the time interval $[0,T^*/\alpha]$ (typically greater than~$n_\eps =O(|\log \eps|)$, see \cite{GSRT} for a quantitative estimate of the truncation parameter).
Here the global bound coming from the maximum principle  will enable us to iterate this truncation process on any time  interval.

\subsection{A priori estimates coming from the maximum principle}
\label{sec:  A priori estimates}\ \\
	For initial data as~(\ref{initial}), uniform a priori bounds can be obtained using only  the maximum principle for the Liouville equation (\ref{Liouville}).

\begin{Prop}\label{apriori-est} 
	For any fixed $N$, denote by $f_N$ the solution to the Liouville equation~{\rm(\ref{Liouville})} with initial data~{\rm(\ref{initial})}, and by $f_N^{(s)}$ its marginal of order $s$
	\begin{equation}
	\label{s-marginal}
	f_N^{(s)} (t,Z_s) := \int f_N(t,Z_N) \, dz_{s+1}\dots dz_N \, .
	\end{equation}
Then, for any $s\geq 1$,  the following  bounds hold uniformly with respect to time 
	\begin{equation}
	\label{upper-bound}
	   \sup_{t }  f_N^{(s)}(t,Z_s) \leq     M_{N,\beta}^{(s)} (Z_s)  \|\rho^0\|_{L^\infty} \leq    C^s M_\beta^{\otimes s} (V_s)  \|\rho^0\|_{L^\infty}\, ,
	   \end{equation}
for some $C>0$, provided that $\alpha \eps\ll1$.
\end{Prop}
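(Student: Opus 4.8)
The plan is to exploit the fact that the perturbed initial data $f_N^0 = M_{N,\beta}\,\rho^0(x_1)$ is controlled from above by $\|\rho^0\|_{L^\infty}\,M_{N,\beta}$, and that $M_{N,\beta}$ is itself a stationary solution of the Liouville equation. First I would observe that the Liouville equation \eqref{Liouville} is a transport equation along the Hamiltonian flow on $\cD_\eps^N$ with specular reflection on the boundary, so it obeys a maximum principle: if $0 \le f_N^0 \le g_N^0$ pointwise (a.e.) on $\cD_\eps^N$ and $g_N$ solves the same Liouville equation, then $0 \le f_N(t) \le g_N(t)$ for all $t\ge 0$. This is because $f_N$ (resp.\ $g_N$) is constant along characteristics, the characteristics are globally well defined off a measure-zero set (as recalled after \eqref{tracecondition}, citing \cite{alexanderthesis}), and the specular reflection condition \eqref{tracecondition} is measure-preserving and compatible with the ordering. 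Taking $g_N^0 := \|\rho^0\|_{L^\infty}\,M_{N,\beta}$, which is nonnegative and dominates $f_N^0$ since $0 \le \rho^0(x_1) \le \|\rho^0\|_{L^\infty}$, and noting that the corresponding solution is the stationary one $g_N(t) = \|\rho^0\|_{L^\infty}\,M_{N,\beta}$ (because $M_{N,\beta}$ is a function of the energy $H_N$ alone, hence a stationary solution of \eqref{Liouville}), we obtain
\begin{equation}
\label{plan-liouville-bound}
0 \le f_N(t,Z_N) \le \|\rho^0\|_{L^\infty}\, M_{N,\beta}(Z_N) \qquad \text{for all } t\ge 0 .
\end{equation}

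Next I would integrate \eqref{plan-liouville-bound} over the variables $z_{s+1},\dots,z_N$. Since the bound is pointwise and all quantities are nonnegative, the integral inequality is preserved and yields
\begin{equation}
\label{plan-marginal-bound}
\sup_t f_N^{(s)}(t,Z_s) \le \|\rho^0\|_{L^\infty}\, M_{N,\beta}^{(s)}(Z_s),
\end{equation}
using the definitions \eqref{s-marginal} and \eqref{defmarginal} of the marginals. This gives the first inequality in \eqref{upper-bound} directly and without any smallness assumption.

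Finally, for the second inequality in \eqref{upper-bound} I would invoke Proposition~\ref{exclusion-prop}: in the scaling $\alpha\eps \ll 1$, that proposition gives $\big| (M_{N,\beta}^{(s)} - M_\beta^{\otimes s})\,\indc_{\cD_\eps^s}\big| \le C^s \eps\alpha\, M_\beta^{\otimes s}$, and since $M_{N,\beta}^{(s)}$ is supported in $\cD_\eps^s$ this forces $M_{N,\beta}^{(s)}(Z_s) \le (1 + C^s\eps\alpha)\,M_\beta^{\otimes s}(V_s) \le C'^{\,s}\, M_\beta^{\otimes s}(V_s)$ for a possibly enlarged constant, once $\eps\alpha$ is small enough (for fixed $s$; the constant absorbs the bounded factor $1+C^s\eps\alpha$ since $s$ is fixed, or one keeps $C^s(1+\eps\alpha)$ and relabels). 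Combining with \eqref{plan-marginal-bound} closes the proof.

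The only genuine subtlety — hence the step I would treat most carefully — is the rigorous justification of the maximum principle \eqref{plan-liouville-bound} for the hard-sphere Liouville dynamics: one must be sure that the flow is measure-preserving and defined for a.e.\ initial datum, that the specular boundary condition does not create or destroy mass and respects pointwise ordering, and that ``$f_N$ is transported along characteristics'' is meaningful as an $L^\infty$ (or even just a.e.-defined) statement rather than requiring smoothness. All of these facts are standard for hard spheres and are exactly the ones recalled in the paragraph following \eqref{tracecondition} (the well-posedness of characteristics off a null set, from \cite{alexanderthesis}); everything else in the argument is elementary monotonicity and integration.
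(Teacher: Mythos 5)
Your proposal is correct and follows essentially the same route as the paper: the paper likewise bounds $f_N^0 \le \|\rho^0\|_{L^\infty} M_{N,\beta}$, invokes the maximum principle for the Liouville equation together with stationarity of the Gibbs measure to propagate this bound in time, and then obtains the marginal estimates by integration and Proposition~\ref{exclusion-prop}. Your extra care about the a.e.\ well-posedness of the hard-sphere flow and the specular reflection condition is exactly the justification the paper leaves implicit (via the reference to~\cite{alexanderthesis}).
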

	
	 {\noindent 	Note here that although the variable $z_1$ does not play at all a symmetric role with respect to~$z_2,\dots z_N$,  the upper bound~(\ref{upper-bound}) does not see this asymmetry.}
		
	\begin{proof}
One has immediately from~(\ref{initial}) that 
 	$$
	   f_N^0(Z_N) =M_{N,\beta}(Z_N)  \rho^0(x_1)  \leq    M_{N,\beta}(Z_N)   \|\rho^0\|_{L^\infty}\,.
	$$
Since the maximum principle holds for the Liouville equation (\ref{Liouville}), and as the Gibbs measure~$M_{N,\beta}$ is a stationary solution, we get for all~$t \geq 0$
$$ 
f_N(t,Z_N) \leq    M_{N,\beta} (Z_N)   \|\rho^0\|_{L^\infty}\,.
$$

\noindent	The   inequalities for the marginals follow  by integration and Proposition \ref{exclusion-prop}.	\end{proof}

\subsection{Continuity estimates for the collision operators}$ $
  To get uniform estimates with respect to $N$, the usual strategy is to use some  Cauchy-Kowalewski argument.
  In the following we shall  denote by~$|Q|_{s,s+n} $ the operator obtained by summing the absolute values of all elementary contributions
  $$	\begin{aligned}
|Q|_{s,s+n} (t) :=    \int_0^t \int_0^{t_1}\dots  \int_0^{t_{n-1}}  {\bf S}_s(t-t_1)\, \,  |C_{s,s+1} | \, \, {\bf S}_{s+1}(t_1-t_2)  
 \, \,  |C_{s+1,s+2}|\, \dots   {\bf S}_{s+n}(t_n)   \: dt_{n} \dots dt_1
	\end{aligned}
	$$
	and similarly for~$|Q^0|_{s,s+n}$
  $$
	\begin{aligned}
|Q^0|_{s,s+n} (t) :=   \int_0^t \int_0^{t_1}\dots  \int_0^{t_{n-1}}  {\bf S}^0_s(t-t_1)\, \,   |C^{0}_{s,s+1}|\, \,  {\bf S}^0_{s+1}(t_1-t_2)  
\, \,  |C_{s+1,s+2}^{0}|\, \dots   {\bf S}^0_{s+n}(t_n)   \: dt_{n} \dots dt_1
	\end{aligned}
$$
where
$$\begin{aligned}
	&\big(|C_{s,s+1}| f_N^{(s+1)}\big) (Z_s)\\
	& :=   (N-s) \eps^{d-1} \alpha^{-1}\sum_{i=1}^s \int_{{\mathbf S}^{d-1} \times \R^ d}  f_N^{(s+1)}(\dots, x_i, v_i^*,\dots , x_i+\eps \nu, v^*_{s+1}) \Big((v_{s+1}-v_i) \cdot \nu\Big)_+ d\nu dv_{s+1}  \\
& \   +(N-s) \eps^{d-1} \alpha^{-1} \sum_{i=1}^s \int_{{\mathbf S}^{d-1} \times \R^ d}    f_N^{(s+1)}(\dots, x_i, v_i,\dots , x_i+\eps \nu, v_{s+1}) \Big((v_{s+1}-v_i) \cdot \nu\Big)_- d\nu dv_{s+1}
\end{aligned}
$$
and
$$	\begin{aligned}
	\big(|C^{0}_{s,s+1 }  | g^{(s+1) }\big)(Z_s)      &:=   \sum_{i=1}^s \int  g^{(s+1)}(\dots, x_i, v_i^*,\dots , x_i , v^*_{s+1}) \Big((v_{s+1}-v_i) \cdot \nu\Big)_+ d\nu dv_{s+1}  \\
&\quad +  \sum_{i=1}^s \int  g^{(s+1)}(\dots, x_i, v_i,\dots , x_i , v_{s+1}) \Big((v_{s+1}-v_i) \cdot \nu\Big)_- d\nu dv_{s+1} \, .
	\end{aligned}
	$$

\noindent  For $\lambda>0$ and~$k\in \N^*$, we define~$X_{\e,k,\lambda}$
the space of measurable functions~$f_k$ defined almost everywhere on~$ \cD_\eps^k$ such that
$$
 	\|  f_{k}	\| _{\e,k,\lambda}:= {\rm{supess}}_{Z_k \in  \cD_\eps^k}
	\Big| f_k(Z_k) \; \exp \big( \lambda H_k(Z_k) \big) \Big| < \infty\, ,
$$	
and similarly~$X_{0,k,\lambda}$ is
the space of continuous functions~$g_k$  defined on~${\mathbf T}^{dk} \times \R^{dk}$ such that
$$
 	\|g_k\|_{0, k,\lambda} := \sup_{Z_k \in {\mathbf T}^{dk} \times \R^{dk}}\Big| g_k(Z_k) \exp \, \big(\lambda H_k(Z_k)\big) \Big|< \infty\, .$$	
	
	\begin{Lem}\label{continuity} 
	There is a constant~$C_d$ depending only on~$d$ such that for all $s,n\in \N^*$ and all~$t\geq 0$, the operators $|Q|_{s,s+n} (t)$ and $|Q^0|_{s,s+n} (t)$  satisfy the following continuity estimates: for all~$f_{s+n} $ in~$ X_{\e,s+n,\lambda}$, $|Q|_{s,s+n}(t)f_{s+n}$ belongs to~$ X_{\e,s,\frac\lambda2}$ and
	\begin{equation}
	\label{estimatelemmacontinuity} 
	\Big\| |Q|_{s,s+n}(t)f_{s+n} \Big\|_{\e,s,\frac\lambda2} \leq    e^{s-1} \left( {C_d t\over \lambda^{\frac {d+1}2}}  \right) ^n 	\| f_{s+n}	\| _{\e,s+n,\lambda} \, .
	\end{equation}
	Similarly for all~$g_{s+n} $ in~$ X_{0,s+n,\lambda}$, $|Q^0|_{s,s+n}(t)g_{s+n}$ belongs to~$ X_{0,s,\frac\lambda2}$ and
	\begin{equation}
	\label{Q0-est}
	\Big\| |Q^0|_{s,s+n}(t)g_{s+n} \Big\|_{0,s,\frac\lambda2} \leq   e^{s-1} \left( {C_d t\over \lambda^{\frac {d+1}2}}  \right) ^n 	\| g_{s+n}	\| _{0,s+n,\lambda} \,.
	\end{equation}
\end{Lem}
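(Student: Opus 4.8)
The plan is to estimate $|Q|_{s,s+n}(t)f_{s+n}$ directly from its integral definition, bounding each collision operator $|C_{j,j+1}|$ by exploiting the Gaussian weight carried by $f_{s+n}$, and then controlling the time integrals over the simplex $0 \le t_n \le \dots \le t_1 \le t$. First I would record the pointwise action of a single collision operator: since $(N-j)\eps^{d-1}\alpha^{-1} \le 1$, one has
\begin{equation}
\big(|C_{j,j+1}| f\big)(Z_j) \le \sum_{i=1}^j \int_{\mathbf S^{d-1}\times\R^d} |f|(\dots,x_i,v_i^\sharp,\dots,x_i+\eps\nu,v_{j+1}^\sharp)\, |(v_{j+1}-v_i)\cdot\nu|\, d\nu\, dv_{j+1},
\end{equation}
where $(v_i^\sharp,v_{j+1}^\sharp)$ is either the pair $(v_i,v_{j+1})$ or its pre-collisional image $(v_i^*,v_{j+1}^*)$ depending on the sign; in both cases the elastic collision preserves $|v_i|^2+|v_{j+1}|^2$, hence $H_{j+1}$ is conserved by the velocity substitution. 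This is the crucial point that lets the weight propagate cleanly.

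Next, the free flow ${\bf S}_j$ also preserves $H_j$ (kinetic energy is conserved along free transport with specular reflection), so composing the $n$ collision operators with the interleaved flows and inserting the bound $|f_{s+n}(Z_{s+n})| \le \|f_{s+n}\|_{\eps,s+n,\lambda}\, e^{-\lambda H_{s+n}(Z_{s+n})}$, every intermediate configuration gets weighted by $e^{-\lambda H}$ with the energy evaluated at the top level. I would then peel off one velocity integration at a time: each newly created particle $v_{j+1}$ contributes a factor
\begin{equation}
\int_{\R^d} |(v_{j+1}-v_i)\cdot\nu|\, e^{-\frac{\lambda}{2}|v_{j+1}|^2}\, dv_{j+1} \le \frac{C_d}{\lambda^{(d+1)/2}}\big(1+|v_i|\big),
\end{equation}
after using half of the available exponential weight in the $(j+1)$-st variable and integrating over $\nu \in \mathbf S^{d-1}$. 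The leftover $e^{-\frac{\lambda}{2}H}$ absorbs the growing polynomial factors $(1+|v_i|)$ via $|v_i| e^{-\frac{\lambda}{4}|v_i|^2} \le C/\sqrt\lambda$, and the $s$ summands in each $|C|$ together with the recursion in the number of already-present particles produce the combinatorial factor $\prod_{j=s}^{s+n-1} j$, which after the standard bookkeeping (absorbing $j/(j\text{-ish})$ ratios) collapses to the prefactor $e^{s-1}$ quoted in the statement — here I would follow the accounting of \cite{GSRT} verbatim. Finally the $n$-fold time integral over the simplex gives $t^n/n!$, and the $n!$ is exactly what is eaten by the combinatorial factor, leaving $(C_d t/\lambda^{(d+1)/2})^n$.

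The argument for $|Q^0|_{s,s+n}$ is identical, if anything simpler, since the limit collision operators $C^0_{j,j+1}$ differ from $C_{j,j+1}$ only in that the two colliding particles sit at the same point $x_i$ rather than at distance $\eps$, which is irrelevant for the velocity estimates above, and the free flow ${\bf S}^0_j$ on $\T^{dj}\times\R^{dj}$ still conserves energy; continuity of $|Q^0|_{s,s+n}g_{s+n}$ follows because free transport preserves continuity (Remark \ref{boltz-def}). The main obstacle — and the only place requiring care — is the combinatorial bookkeeping that turns the product of the $s,s+1,\dots,s+n-1$ summation counts into the clean bound $e^{s-1}(C_d t \lambda^{-(d+1)/2})^n$ rather than something like $(s+n)^n$; this is handled by the now-standard trick of distributing the loss of weight $\lambda \to \lambda/2$ across the $n$ integrations so that each step only costs a bounded factor, and by noting that the extra particle indices $i \in \{1,\dots,j\}$ can be charged against the geometric series in the $e^{s-1}$ factor. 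I would simply cite the corresponding lemma in \cite{GSRT} for this part, since the linear setting introduces no new difficulty here — the only structural change from the nonlinear case is that the tagged particle $z_1$ is always retained, which does not affect the estimates.
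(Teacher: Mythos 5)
Your proposal follows essentially the same route as the paper: bound each collision operator pointwise using the Gaussian weight (which survives because both the transport and the collision substitution conserve kinetic energy), pay $C_d\lambda^{-(d+1)/2}$ times the current number of particles per creation after distributing the exponential weight over the $n$ steps, collect $t^n/n!$ from the time simplex, and let the factorial eat the combinatorial product (the paper does this via $(s+n)^n/n!\le e^{s+n}$ by Stirling, you via $\binom{s+n-1}{n}\le 2^{s+n-1}$ — same effect). The only imprecision is cosmetic: your single-step bound should read $C_d\lambda^{-d/2}(\lambda^{-1/2}+|v_i|)$, the missing $\lambda^{-1/2}$ being recovered when $|v_i|$ is absorbed by the leftover weight, exactly as in the paper's appeal to the estimates of \cite{GSRT}.
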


\begin{proof} Estimate~(\ref{estimatelemmacontinuity}) is simply obtained from the fact that the transport operators preserve the weighted norms, along with the continuity of the elementary collision operators.
From the erratum of \cite{GSRT}, we get the following statements
\begin{itemize}
\item  the transport operators satisfy the identities
$$\begin{aligned}
	\| {\mathbf  S}_k (t) f_k 	\| _{\eps,k,\lambda} & = 	\| f_k	\| _{\eps,k,\lambda} \\
	\| {\mathbf  S}_k^0 (t) g_k 	\| _{0,k,\lambda} & = 	\| g_k	\| _{0,k,\lambda} \, .
	\end{aligned}
	$$
\item  the collision operators satisfy the following bounds in the Boltzmann-Grad scaling~$N \e^{d-1} \equiv \alpha $ 
$$
 \Big| {\mathbf S}_k (-t) \, |{C}_{k,k+1} | \, {\mathbf S}_{k+1} (t)   f_{k+1} (Z_k)\Big| 
  \leq   {C_{d}} \, {\lambda^{-\frac d2} } \Big( k\lambda^{-\frac12} + \sum_{1 \leq i \leq k} |v_i|\Big) \exp\left(- \lambda H_k (Z_k)\right)\|  f_{k+1}	\| _{\e,k+1,\lambda} 
$$
almost everywhere on $\R_t \times \cD_\eps^k$, for some $C_{d} > 0$ depending only on $d$, and
\begin{equation}
\label{est-col:1bisHS}
  \big| |{C}^0_{k,k+1} |\; g_{k+1} (Z_k)\big|    \leq    {C_{d}} \, {\lambda^{-\frac d2} } \Big( k\lambda^{-\frac12} + \sum_{1 \leq i \leq k} |v_i|\Big)  \exp\left(- \lambda H_k (Z_k)\right)	\|  g_{k+1}	\| _{0,k+1,\lambda}\, ,
\end{equation}
on $\T^{dk} \times \R^{dk}$.

\end{itemize}
The result then  follows from piling together those inequalities (distributing the  exponential weight evenly on each occurence of a collision term). We  notice that by the Cauchy-Schwarz inequality,
$$
\begin{aligned}
\sum_{1 \leq i \leq k} |v_i| \exp\Big(- \frac \lambda{4n} \sum_{1 \leq j \leq k} |v_j|^2\Big) &\leq \left( k\frac {2n} \lambda\right) ^{\frac12} \left( \sum_{1 \leq i \leq k} \frac\lambda {2n} |v_i|^2 \exp\Big(- \frac \lambda{2n} \sum_{1 \leq j \leq k} |v_j|^2\Big)\right)^{1/2} \\
&\leq \Big( \frac{2nk}{e\lambda}\Big)^{1/2} \leq \sqrt{ \frac{2}{e\lambda}} (s+n)  \, ,
\end{aligned}
$$
with $k \leq s+n$ in the last inequality. 
Each collision operator gives therefore a loss of $ C \lambda ^{-(d+1)/2}  (s+n) $ together with a loss on the exponential weight, while 
the integration with respect to time provides  a factor $t^n/n!$. By Stirling's formula, we have
$${ (s+n)^n\over n!} \leq  \exp \left(  n \log {n+s \over n }  + n\right) \leq \exp ( s+n) \,.$$
That proves the first statement in the lemma.
The same arguments give the counterpart for the Boltzmann collision operator.
\end{proof}

\subsection{Collision trees of controlled size} 
For general initial data (in particular, for chaotic initial data),
the proof of Lanford's convergence result then relies on two steps:
\begin{itemize}
\item[(i)] a short time bound for the series expansion~(\ref{iterated-Duhamel}) expressing the correlations of the system of $N$ particles  and a similar bound for  the corresponding quantities associated with the Boltzmann hierarchy;
\item[(ii)] the termwise convergence of each term of the series.
 \end{itemize}
However after a short time  (depending on the initial data), the question of the convergence of
the series~(\ref{iterated-Duhamel}) is still open. 
One of the difficulties to prove this convergence is to take into account the cancellations between the gain and loss terms of the collision operators. These cancellations are neglected in Lanford's strategy.

\smallskip

 	\noindent Here  we assume that the BBGKY initial data takes the form~(\ref{initial})
and the Boltzmann initial data takes the form~(\ref{defmarginalsboltzmann}), and we shall take advantage of the control by   stationary solutions (the existence of which is obviously related to these cancellations) given by Proposition~\ref{apriori-est} to obtain a lifespan which does not depend on the initial data.
Indeed, we have thanks to Propositions~\ref{exclusion-prop} and~\ref{apriori-est} provided that $\alpha \eps\ll1$
$$
\begin{aligned}
	\| f_N^{(k)}(t) 	\| _{\e,k,\b} & = {\rm{supess}}_{Z_k \in  \cD_\eps^k}
	\Big| f_N^{(k)}(t,Z_k) \; \exp \big( \b H_k(Z_k) \big) \Big| \\
	& \leq    \sup_{Z_k \in  \cD_\eps^k}\Big(
M_{N,\beta}^{(k)}(Z_k) \exp \big( \b H_k(Z_k) \big)\Big)   \|\rho^0\|_{L^\infty}\\
 & \leq  C^k \sup_{Z_k \in  \cD_\eps^k} \Big( M_\beta^{\otimes k} (V_k) \exp \big( \b H_k(Z_k) \big) \Big)  \|\rho^0\|_{L^\infty}\, .
\end{aligned}
$$
Thus for all~$t \in \R$,
\begin{equation}
\label{estimatefNktweight}
	\| f_N^{(k)}(t)	\| _{\e,k,\b} \leq   C^k \Big(\frac\beta{2\pi}\Big)^{kd/2}  \|\rho^0\|_{L^\infty}\, .
\end{equation}
Similarly for  the initial data for the Boltzmann hierarchy defined in~(\ref{defmarginalsboltzmann}),  by Remark \ref{boltz-uniq} the solution~(\ref{solutionBoltzmannlinear}) of the evolution is bounded by  
\begin{equation}
\label{estimategNktweight}
 	\| g_\alpha^{(k)}(t)	\| _{0,k,\b} \leq       \Big(\frac\beta{2\pi}\Big)^{kd/2}  \|\rho^0\|_{L^\infty} \, .
\end{equation}

\medskip
\noindent
 Moreover we shall use a truncated series expansion instead of (\ref{iterated-Duhamel}) and (\ref{iterated-Duhamelboltz}).
Let us fix a (small) parameter $h>0$  and a sequence~$\{n_k\}_{k \geq 1}$ of integers  to be tuned later.
We shall study the dynamics up to time  $t := K h$ for some large integer $K$, by splitting the time interval~$[0,t]$  into~$K$ intervals,  and controlling the number of collisions  on each interval. 
In order to discard  trajectories with a large number of collisions in the iterated Duhamel formula \eqref{iterated-Duhamel}, we define collision trees ``of controled size" by the condition that they have strictly less than $n_k$ branch points on the  interval~$[t-kh ,t-(k-1) h]$.
Note that by construction, the trees are actually  followed ``backwards", from time~$t$ (large) to time~$0$. 

\begin{figure}[h] %  figure placement: here, top, bottom, or page
   \centering
  \includegraphics[width=4.2in]{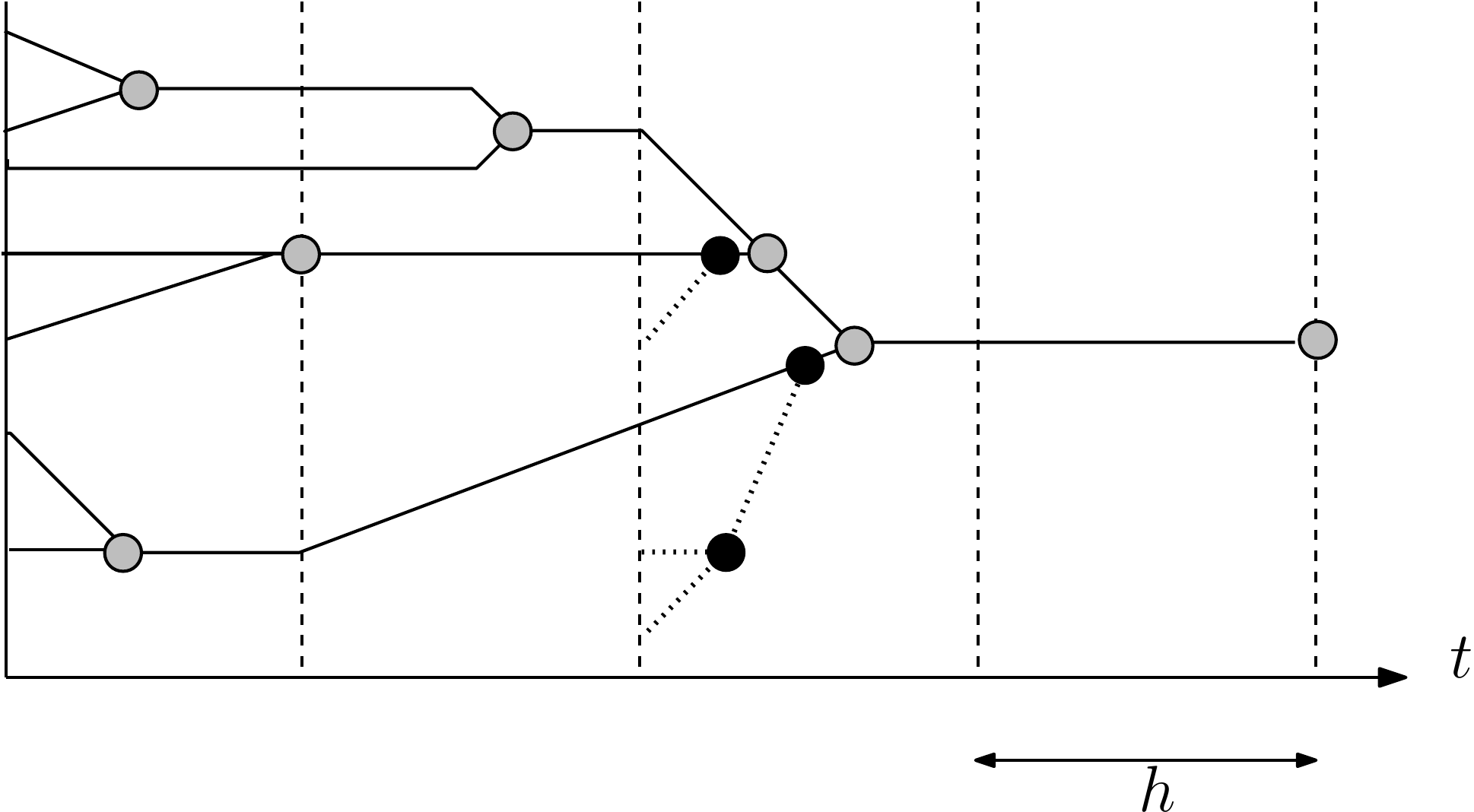} 
\caption{\label{schema} Suppose~$n_k= A^k$ with~$A=2$. 
Each collision is represented by a circle from which 2 trajectories emerge.
The tree including the three  extra collisions in dotted lines occurring during $[t-2 h,t-h]$ is not a good collision tree and in our procedure, it would be truncated at time $t-2 h$.
The tree without the dotted lines is a good collision tree with $t = 4 h$ :  the number of collisions during the $k^{\text{th}}$-time interval is less than $n_k - 1= A^k -1$. }
 \end{figure}

\noindent As we are interested only in the asymptotic behaviour of  the first marginal, we start by using~\eqref{Duhamel} with~$s=1$, during the time interval $[t -  h, t]$: iterating Duhamel's formula up to time~$t-h$ instead of time 0,  we have
\begin{equation}
\label{firststepfN1}
\begin{aligned}
f^{(1)}_N(t) =  \sum_{j_1=0}^{n_1 - 1}\alpha^{j_1-1}  Q_{1,1+j_1} (h ) f^{(j_1)}_N(t - h )
+ R_{1,n_1}(t - h,t) \, ,
\end{aligned}
\end{equation}
where~$R_{1,n_1}$ accounts for at least~$n_1$ collisions
\begin{align*}
R_{1,n_1}(t',t) :=   \sum_{p= n_1}^{N-1}\alpha^p Q_{1,p+1}(t-t')    f^{(p+1)}_N(t') \, . \nonumber
\end{align*}
More generally we define~$R_{k,n}$   as follows
\begin{align}
\label{eq: reste}
R_{k,n}(t',t) :=   \sum_{p= n }^{N-k}  \alpha^p Q_{k,k+p} (t-t')     f^{(k+p)}_N(t') \, . \nonumber
\end{align}
  The  term $R_{k,n}(t',t)$  accounts for trajectories originating at~$k$ points at time $t$, and involving at least~$n$ collisions during the time-span~$t-t'$. The idea is that
if $n$ is large then such a behaviour should be atypical and~$R_{k,n}(t ',t)$  should be negligible. 

\medskip
\noindent The first term on the right-hand side of~(\ref{firststepfN1}) can be broken up again by iterating the Duhamel formula on the time interval 
$[t -  2 h, t - h]$ and truncating the contributions with more than~$n_2$ collisions: this gives
\begin{align*}
f^{(1)}_N(t) 
 =&   \sum_{j_1=0}^{n_1-1}\sum_{j_2=0}^{n_2-1}\alpha^{j_1+j_2} Q_{1,1+j_1} (h )Q_{1+j_1,1+j_1 +j_2} (h ) \, f^{(1+j_1+ j_2)}_N(t-2h  ) \\
& + R_{1,n_1}(t-h,t) +  \sum_{j_1=0}^{n_1-1} \alpha^{j_1} Q_{1,j_1+1} (h ) R_{j_1+1,n_2} (t- 2h,t -h) \, .
\end{align*}
Iterating this procedure $K$ times and truncating the trajectories with at least~$n_k$ collisions during the time interval $[t-kh ,t-(k-1) h]$, leads to the following expansion
\begin{equation}
\label{eq: serie f1}
 f^{(1)}_N(t) 
 =  f^{(1,K)}_N(t)  + R_N^K(t) \, , 
 \end{equation}
where denoting~$J_0:=1$ and~$J_k :=1+ j_1 + \dots +j_k$,
\begin{equation}
\label{eq: serie f1developpee}
\begin{aligned}
  f^{(1,K)}_N(t)     : =   \sum_{j_1=0}^{n_1-1}\! \!   \dots \!  \! \sum_{j_K=0}^{n_K-1}\alpha^{J_K-1} Q_{1,J_1} (h )Q_{J_1,J_2} (h )
 \dots  Q_{J_{K-1},J_K} (h ) \, f^{0(J_K)}_N  
 \end{aligned}
\end{equation}
and
$$
  R_N^K(t)  := \sum_{k=1}^K \; \sum_{j_1=0}^{n_1-1} \! \! \dots \! \! \sum_{j_{k-1}=0}^{n_{k-1}-1} \; 
\alpha^{J_{k-1}-1} Q_{1,J_1} (h ) \dots  Q_{J_{k-2},J_{k-1}} (h ) \, R_{J_{k-1},n_k}(t-k h, t-(k-1)h )  \, . 
$$
By an appropriate choice of the sequence $\{n_k\}$, we are going to show that the main contribution to the density $f^{(1)}_N (t)$ is given by~$ f^{(1,K)}_N(t)$ and that~$R_N^K(t)  $ vanishes asymptotically.

 \bigskip
 \noindent
 Next as in~(\ref{eq: serie f1developpee}) we can write a truncated expansion for
 $g_\alpha$ (see \eqref{solutionBoltzmannlinear}) as follows:
\begin{equation}
\label{eq: serie f1boltz}
g_\alpha (t) 
 =  g_\alpha^{(1,K)} (t)  + R_\alpha ^{0,K}(t) \, , 
 \end{equation}
where with notation~(\ref{defmarginalsboltzmann}) and~(\ref{solutionBoltzmannlinear}),
\begin{equation}
\label{eq: serie f1developpeeboltz}
\begin{aligned}
  g _\alpha^{(1,K)} (t)    : =   \sum_{j_1=0}^{n_1-1} \! \! \dots\! \!  \sum_{j_K=0}^{n_K-1}\alpha ^{J_K-1} Q^0_{1,J_1} (h )Q^0_{J_1,J_2} (h )
 \dots  Q^0_{J_{K-1},J_K} (h ) \, g_\alpha^{0(J_K)}
 \end{aligned}
\end{equation}
and
$$
R_\alpha ^{0,K}(t) := \sum_{k=1}^K \; \sum_{j_1=0}^{n_1-1} \! \! \dots \! \! \sum_{j_{k-1}=0}^{n_{k-1}-1} \; 
\alpha^{ J_{k-1}-1} Q^0_{1,J_1} (h ) \dots  Q^0_{J_{k-2},J_{k-1}} (h ) \, R^0_{J_{k-1},n_k}(t-k h, t-(k-1)h )
$$
with
$$R^0_{k,n}(t',t) :=  \sum_{p\geq n } \alpha^p
 Q^0_{k,k+p}(t-t')     g_\alpha^{(k+p)}(t')  \, .
$$

\subsection{Estimates of the remainders}$ $
 \noindent Since we expect the particles to undergo on average one collision per unit of time, the growth of collision trees is typically exponential. Pathological trees are therefore those with super exponential growth. There are   two natural ways of defining such pathological trees
\begin{itemize}
\item either by choosing some fixed $h$ (given for instance by Lanford's proof) and $\log n_k \gg k$;
\item or by fixing $n_k = A^k$ and letting the elementary time interval $h \to 0$.
\end{itemize}
We shall choose the latter option.

\begin{Prop}
\label{prop: remainders}
Under the assumptions of Theorem~{\rm\ref{long-time}}, the following holds. Let~$A\geq 2$ be given and define~$ n_k: = A^k, $ for~$k \geq 1$. Then there exist  $c, C, \gamma_0>0$  depending   on  $d$, $A$ and~$\beta$ such that for any $t>1$  and any $\gamma\leq \gamma_0 $,
 choosing 
\begin{equation}
\label{eq: tau nk}
  h \leq  { c\gamma \over  \alpha^{A/(A-1)}t^{1/(A-1)}} 
  \quad \text{and} \quad
  K = t/h \ \text{integer}
\end{equation}
we get
\begin{equation}
\label{eq: controle erreur}
\left\|
  R_N^K(t)\right\|_{L^\infty({\mathbf T}^{d} \times \R^{d})} +  \left\|
 R_\alpha ^{0,K}(t)\right\|_{L^\infty({\mathbf T}^{d} \times \R^{d})}  \leq  C   \gamma^A    \|\rho^0\|_{L^\infty} \, .\end{equation}
\end{Prop}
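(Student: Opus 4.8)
The plan is to estimate the two remainder terms $R_N^K(t)$ and $R_\alpha^{0,K}(t)$ by bounding each elementary contribution with the continuity estimates of Lemma~\ref{continuity}, then summing the resulting geometric-type series. I would treat both terms in parallel since the Boltzmann-hierarchy bound is structurally identical (replace $|Q|$ by $|Q^0|$ and the a priori bound~(\ref{estimatefNktweight}) by~(\ref{estimategNktweight})); I describe the BBGKY case.

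\emph{Step 1: reduce to a product of operator norms.} Recall
$$
R_N^K(t) = \sum_{k=1}^K \sum_{j_1=0}^{n_1-1}\!\!\dots\!\!\sum_{j_{k-1}=0}^{n_{k-1}-1} \alpha^{J_{k-1}-1} Q_{1,J_1}(h)\dots Q_{J_{k-2},J_{k-1}}(h)\, R_{J_{k-1},n_k}(t-kh,t-(k-1)h),
$$
and $R_{m,n}(t',t) = \sum_{p\geq n} \alpha^p Q_{m,m+p}(t-t') f_N^{(m+p)}(t')$. I would take absolute values inside, bound $|Q_{a,b}(h)| \le |Q|_{a,b}(h)$, apply~(\ref{estimatelemmacontinuity}) repeatedly in the weighted norm $\|\cdot\|_{\e,\cdot,\lambda}$ with $\lambda = \beta$ split dyadically across the $K$ applications — so after $k$ successive $Q$'s the weight has dropped from $\beta$ to $\beta/2^k$; since $K$ will be large this forces me instead to distribute a \emph{fixed fraction} of the weight, e.g. run each $Q_{J_{\ell-1},J_\ell}(h)$ with a small loss $\beta \to \beta(1-1/(2K))$ or simply redo the proof of Lemma~\ref{continuity} distributing $\lambda$ over the total number of collision operators in the whole tree. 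The cleanest route: for the innermost $R_{J_{k-1},n_k}$ use Lemma~\ref{continuity} to get a factor $e^{J_{k-1}-1}(C_d h/\beta^{(d+1)/2})^p$ times $\|f_N^{(J_{k-1}+p)}(t-kh)\|_{\e,\cdot,\beta}$, then invoke the uniform-in-time a priori bound~(\ref{estimatefNktweight}), namely $\|f_N^{(m)}(t')\|_{\e,m,\beta}\le C^m(\beta/2\pi)^{md/2}\|\rho^0\|_{L^\infty}$, which is exactly what the maximum principle (Proposition~\ref{apriori-est}) buys us and what makes the argument work globally in time.

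\emph{Step 2: sum the series.} After Step 1 each term is bounded by a constant (depending on $d,\beta$) to the power $J_K$, times $\alpha^{J_{k-1}-1+p}(C_d h/\beta^{(d+1)/2})^{J_{k-1}-1+p}\,\|\rho^0\|_{L^\infty}$, i.e.\ essentially $(C\alpha h)^{(\text{total number of collisions})}$ up to geometric factors $C^{J_K}$ absorbed by shrinking $h$. The sum over $p\ge n_k = A^k$ is geometric in $C\alpha h$ and, provided $C\alpha h \le 1/2$, produces $(C\alpha h)^{A^k}$ times a bounded factor; the sums over $j_1,\dots,j_{k-1}$ each range over $\le n_\ell = A^\ell$ values and each contributes a bounded multiplicative constant once $C\alpha h$ is small (the number of terms $\prod_{\ell<k} A^\ell = A^{(k^2-k)/2}$ is beaten by the smallness $(C\alpha h)^{A^{k-1}}$ coming from the earlier intervals, since $A^{k-1}$ grows faster than $k^2$). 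Thus $\|R_N^K(t)\|_{L^\infty} \le \|\rho^0\|_{L^\infty}\sum_{k=1}^K (C\alpha h)^{A^{k-1}(A-1)}$, and the dominant term is $k=1$, giving $(C\alpha h)^{A-1}$. Hmm — to land on $\gamma^A$ as claimed I would instead book-keep one collision separately: the prefactor $\alpha^{J_{k-1}-1}$ has exponent at least $J_0-1=0$ for $k=1$ but the inner sum starts at $p=n_1=A$, so the $k=1$ contribution carries $\alpha^{A}(C_d h/\beta^{(d+1)/2})^{A}$, i.e.\ $(C\alpha h)^A$. With the choice $h \le c\gamma/(\alpha^{A/(A-1)} t^{1/(A-1)})$ and $K=t/h$, one checks $\alpha h \le c\gamma (h/t)^{1/(A-1)} \cdot (\text{something})$; more directly, $(\alpha h)^A \le (c\gamma)^A \alpha^A \alpha^{-A^2/(A-1)} t^{-A/(A-1)} = (c\gamma)^A \alpha^{-A/(A-1)} t^{-A/(A-1)} \le (c\gamma)^A$ for $\alpha,t>1$, yielding the bound $C\gamma^A\|\rho^0\|_{L^\infty}$. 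The requirement $t>1$ enters here and in ensuring $C\alpha h \le 1/2$ (which holds for $\gamma \le \gamma_0$ small).

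\emph{Main obstacle.} The delicate point is the weight bookkeeping across $K\sim t/h$ nested applications of $Q$: a naive dyadic splitting $\beta \to \beta/2 \to \beta/4 \to \dots$ collapses the Gaussian weight after $O(\log)$ steps, so one must instead distribute the \emph{total} exponential budget $\beta$ over the \emph{total} number of collision operators appearing in a given tree (at most $\sum_k n_k = \sum_k A^k \sim A^{K+1}$, which is enormous), and verify that the resulting per-collision loss $C_d(s{+}n)\lambda^{-(d+1)/2}$ with $\lambda \sim \beta/A^{K}$ does not destroy the smallness. The resolution is that each $Q_{J_{\ell-1},J_\ell}(h)$ acts only over the \emph{short} time $h$, so it contributes $(C_d h/\lambda^{(d+1)/2})^{j_\ell}$; even with $\lambda$ as small as $\beta A^{-K}$ the factor $A^{K(d+1)/2}$ is polynomial in $\alpha^{1/(A-1)}$-type quantities and is absorbed by redefining the constant $c$ in~(\ref{eq: tau nk}) — but making this precise, i.e.\ choosing $h$ so that $C_d h \lambda^{-(d+1)/2} \alpha \le C\gamma(\text{geometric ratio})$ uniformly, is where the exponent $A/(A-1)$ in~(\ref{eq: tau nk}) is forced, and is the technical heart of the proof. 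Everything else is summation of geometric series.
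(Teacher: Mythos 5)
There is a genuine gap, and it sits exactly where you flagged ``the technical heart'': the outer composition $Q_{1,J_1}(h)\cdots Q_{J_{k-2},J_{k-1}}(h)$. Your treatment of the innermost remainder (Lemma~\ref{continuity} plus the uniform bound \eqref{estimatefNktweight}, geometric summation over $p\ge n_k$ under $C\alpha h\le 1/2$, and the check that $(\alpha h)^A\le (c\gamma)^A$ using $\alpha, t>1$) is fine, but neither of your two fixes for the weight bookkeeping gives the stated estimate under \eqref{eq: tau nk}. If each of the $K$ applications is run with a loss $\beta\to\beta(1-1/(2K))$, the Cauchy--Schwarz step inside the proof of Lemma~\ref{continuity} costs an extra factor of order $\sqrt K$ \emph{per collision operator}, so the relevant smallness becomes $\alpha h\sqrt{K}=\alpha\sqrt{th}\le \sqrt{c\gamma}\,(\alpha t)^{(A-2)/(2(A-1))}$, which diverges with $\alpha t$ for every $A>2$: the claimed bound $C\gamma^A$ does not follow from \eqref{eq: tau nk} as stated. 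If instead the budget $\beta$ is spread over the whole tree, $\lambda\sim\beta A^{-K}$, each collision operator picks up $A^{K(d+1)/2}=\exp\big(\tfrac{d+1}{2}\,\tfrac th\,\log A\big)$, which is exponentially large in $K=t/h\gtrsim \alpha^{A/(A-1)}t^{A/(A-1)}/\gamma$ -- not ``polynomial in $\alpha^{1/(A-1)}$-type quantities'' -- and the required inequality $C\alpha h\,A^{(d+1)t/(2h)}\le\text{small}$ cannot be rescued by shrinking the constant $c$, since its left-hand side blows up as $h\to 0$ for fixed $t$. So as written the proposal does not prove the proposition for general $A\ge 2$.

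The missing idea is that no iterated (or redistributed) weight loss is needed at all. Each $Q_{J_{\ell-1},J_\ell}(h)$ is an integral over a time simplex, so after taking absolute values the whole composition over the last $k-1$ intervals is dominated by the \emph{single} operator $|Q|_{1,J_{k-1}}\big((k-1)h\big)$ over the total elapsed time (a constrained simplex is contained in the full one). Lemma~\ref{continuity} is then applied only once per remainder term, with one fixed halving $\beta\to\beta/2$; the distribution of the weight over the $J_{k-1}-1$ collisions is paid for internally by the $1/(J_{k-1}-1)!$ of the large simplex. The price is a growth factor $(C\alpha t)^{J_{k-1}-1}$ for the outer collisions instead of powers of $\alpha h$, and it is this factor -- not the weight bookkeeping -- that forces the exponent $A/(A-1)$ in \eqref{eq: tau nk}: since $J_{k-1}\le \mathcal{N}_{k-1}\le n_k/(A-1)$, one gets $(\alpha t)^{J_{k-1}-1}(\alpha h)^{n_k}\le\big((\alpha t)^{1/(A-1)}\alpha h\big)^{n_k}\le\gamma^{A^k}$ with the stated choice of $h$. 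Finally, the sum over $k$ and over $(j_1,\dots,j_{k-1})$ contributes at most $\prod_{i\le k}n_i\le A^{k(k+1)/2}$ terms, which is beaten by $\gamma^{A^k}$ for $\gamma$ small (note the smallness comes from the $\ge n_k$ collisions in the $k$-th interval, not, as you suggest, from the earlier intervals, which may contain no collision at all), giving the bound $C\gamma^A$; the Boltzmann remainder is handled identically with \eqref{estimategNktweight}.
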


\begin{proof}
We are going to bound 
$$
\big\| Q_{1,J_1} (h ) \dots  Q_{J_{k-2},J_{k-1}} (h ) \, R_{J_{k-1},n_k}(t-k h, t-(k-1)h ) \big \|_{L^\infty({\mathbf T}^{d} \times \R^{d})} 
$$ 
for each term in the remainder~$R_N^K$.
The exact distribution  of collisions in the last $k-1$ intervals is not needed and it is enough to estimate directly
$$
\big \| |Q|_{1,J_{k-1}} ((k-1)h ) \, R_{J_{k-1},n_k }(t-k h, t-(k-1)h )  \big\|_{L^\infty({\mathbf T}^{d} \times \R^{d})}  \, .
$$

\noindent Applying 
Lemma \ref{continuity}, one has  (denoting generically by~$C_d$ any constant depending only on~$d$)
\begin{align*}
& \big\| |Q|_{1,J_{k-1}} ((k-1)h ) \,  R_{J_{k-1},n_k}(t-k h, t-(k-1)h )  \big\|_{L^\infty({\mathbf T}^{d} \times \R^{d})} \\
& \qquad 
\leq  \left( {C_d  \; (k-1) h \over \beta^{(d+1)/2 }} \right)^{J_{k-1}-1}   \|  R_{J_{k-1},n_k}(t-k h, t-(k-1)h ) \|_{\e,J_{k-1},\b/2}
\,.
\end{align*}
Then arguing as in the proof of Lemma \ref{continuity}, one can write
$$
\begin{aligned}
& \alpha^{J_{k-1} - 1} \big \| |Q|_{1,J_{k-1}} ((k-1)h ) \,  R_{J_{k-1},n_k}(t-k h, t-(k-1)h ) \big \|_{L^\infty({\mathbf T}^{d} \times \R^{d})} \\
& \qquad\leq  \sum_{p=n_k}^{N-J_{k-1}}
 \left( {C_d  \alpha  (k-1) h \over  \beta ^{(d+1)/2} } \right)^{J_{k-1}-1}  
\left( {C_d \alpha  h \over \beta^{(d+1)/2} } \right)^{p}
 \sup_{t \geq 0} \| f_N^{(J_{k-1} +p)}(t) \|_{\e,J_{k-1} +p,\b} 
  \\
& \qquad 
\leq     \|\rho^0\|_{L^\infty} \beta^{\frac d2} (\alpha t)^{J_{k-1}-1} \sum_{p=n_k}^{N-J_{k-1}}
 \left( {  C_d  \over \sqrt {\beta} }  \right)^{J_{k-1} +p-1} 
  \;(\alpha  h)^{p} 
\, ,
\end{aligned}
$$
thanks to~(\ref{estimatefNktweight}) and recalling that~$(k-1)h \leq t$. Assuming from now on that
\begin{equation}\label{hsmall}
{  C_d  \alpha  h\over \sqrt {\beta} } <\frac12
\end{equation}
we find
\begin{equation}\label{eq: controle erreur bis}
\begin{aligned}
& \alpha^{J_{k-1} - 1} \big\| |Q|_{1,J_{k-1}} ((k-1)h ) \,  R_{J_{k-1},n_k}(t-k h, t-(k-1)h )  \big\|_{L^\infty({\mathbf T}^{d} \times \R^{d})}  \\
& \qquad \leq     \|\rho^0\|_{L^\infty} \beta^{\frac d2} (\alpha t)^{J_{k-1}-1}  \left( {  C_d  \over \sqrt {\beta} }  \right)^{J_{k-1} +n_k-1}  \;(\alpha  h)^{n_k}  \, .
\end{aligned}
\end{equation}

\noindent Note that  $ \mathcal{N}_{j}: = 1+n_1 + \dots + n_j = {A^{j+1} -1\over A-1} \leq {1\over A-1} n_{j+1}$. Then, since $J_{k-1} \leq  \mathcal{N}_{k-1}$,  one has, for some appropriate  constant~$C(d,\beta)  $, 
\begin{align*}
%\label{eq: erreur 2}
&\alpha^{J_{k-1} - 1}  \big\| |Q|_{1,J_{k-1}} ((k-1)h ) \,   R_{J_{k-1},n_k}(t-k h, t-(k-1)h ) \ \big\|_{L^\infty({\mathbf T}^{d} \times \R^{d})} \\
& \qquad 
\leq      \beta^{d/2} 
\exp \Big( A^k \Big(\log { C(d,\beta) +{1\over A-1} \log(\alpha t )+\log(\alpha  h)  \Big)\Big)}
  \|\rho^0\|_{L^\infty}\, . \nonumber
\end{align*}
Therefore, choosing  
$$h \leq  \frac\gamma { { C(d,\beta)  } \; \alpha ^{A/(A-1)} t^{1/(A-1)}}  \,,$$
 which is compatible with~(\ref{hsmall}) as soon as~$\gamma$ is small enough  one has
\begin{equation}
\label{eq: erreur 2}
\begin{aligned}
 \alpha^{J_{k-1} - 1}\big\| |Q|_{1,J_{k-1}} ((k-1)h ) \,  R_{J_{k-1},n_k}(t-k h, t-(k-1)h ) \big\|_{L^\infty({\mathbf T}^{d} \times \R^{d})} 
\\
  \leq  {    \beta^{d/2}}  \exp \Big(     A^k \log \gamma \Big)   \|\rho^0\|_{L^\infty}\, . 
\end{aligned}
\end{equation}
This implies 
\begin{align*}
& \left\|R_N^K\right\|_{L^\infty({\mathbf T}^{d} \times \R^{d})} \\
& \leq   {    \beta^{d/2}} \sum_{k=1}^K  \Big(  \prod_{i=1}^k n_i \Big) \; \exp \Big(   A^k \log \gamma  \Big)  \|\rho^0\|_{L^\infty}
\leq      \beta^{d/2} \sum_{k=1}^K \;  \exp \Big(  k(k+1) \log(A) + A^k \log \gamma \Big)  \|\rho^0\|_{L^\infty} \\
& \leq C_A   \beta^{d/2} \sum_{k=1}^K \;  \exp \Big(  Ak \log \gamma \Big)   \|\rho^0\|_{L^\infty}\leq C_A  \beta^{d/2} \gamma^A  \|\rho^0\|_{L^\infty} \end{align*}
for $\gamma$ sufficiently small, where $C_A$ is a constant depending on $A$.
Thus, we get the first part of \eqref{eq: controle erreur}
$$ \| R_N^K\|_{L^\infty( \T^d \times \R^d)} \leq C \gamma^A \| \rho^0\|_{L^\infty}.$$
The argument is identical in the case of the Boltzmann hierarchy: 
%estimate~(\ref{eq: controle erreur}) becomes
 $$
 \begin{aligned}
&\alpha^{J_{k-1} - 1} \left \| |Q|^0_{1,J_{k-1}} ((k-1)h ) \,  R^0_{J_{k-1},n_k}(t-k h, t-(k-1)h ) \right\|_{L^\infty({\mathbf T}^{d} \times \R^{d})} \\
& \qquad 
\leq  \left( {C_d   (k-1) \alpha h \over  \beta ^{(d+1)/2}}  \right)^{J_{k-1}-1}  
\left( {C_d \alpha   h \over \beta^{(d+1)/2} } \right)^{n_k}  \sup_{t \geq 0} \| g_\alpha^{(J_{k-1} +n _k)}(t) \|_{0,J_{k-1} +n _k,\b}   \\
& \qquad 
\leq     \beta^{\frac d2} \left( {  C_d  \over \sqrt{\beta} }  \right)^{J_{k-1} +n _k-1} 
\; (\alpha t)^{J_{k-1}-1} \; (\alpha h)^{n_k}  \|\rho^0\|_{L^\infty}
\, ,
\end{aligned}
$$
hence finally
$$   \left\| R_\alpha ^{0,K}\right\|_{L^\infty({\mathbf T}^{d} \times \R^{d})}  \leq C_A   \beta^{d/2} \gamma^A  \|\rho^0\|_{L^\infty} \, , 
$$
and the proposition is proved.
\end{proof}

%%%%%%%%%%%%%%%%%%%%%%%%%%%%%%%%%%%%%%%%%%%%%%%%
%%%%%%%%%%%%%%%%%%%%%%%%%%%%%%%%%%%%%%%%%%%%%%%%

\section{Proof of the convergence}
\label{endproofsection}

\noindent In this section, we conclude the proof of Theorem \ref{long-time}.
%prove that  in the scaling~$N \eps^{d-1} \alpha^{-1} \equiv 1$
%$$
%\| f_N^{(1)} - g_\alpha  \|_{L^\infty ([0,T/\alpha] \times {\mathbf T}^d\times \R^d) }\to 0 \, , \quad \mbox{as} \, \, N \to \infty \, .
%$$
Thanks to Proposition~\ref{prop: remainders}, we are reduced to studying $ f_N^{(1,K)} - g_\alpha^{(1,K)}$
(introduced in~(\ref{eq: serie f1}),~(\ref{eq: serie f1boltz})) and to proving that the matching terms in the series $f_N^{(1,K)}$ and $g_\alpha^{(1,K)}$ are close to each other. 

\medskip

Throughout this section, the parameters are chosen such that (with the notation of Proposition
\ref{prop: remainders}) 
\begin{equation}
\label{eq: parametre conditions}
N \eps^{d-1} = \alpha \ll \frac{1}{\eps} \, ,  \quad A \geq 2 \, ,  
\quad t>1 \, ,   \quad K = \frac{t}{h} \, \cdotp
\end{equation}

\noindent
Each elementary term in the series $f_N^{(1,K)}$ and $g_\alpha^{(1,K)}$ has a geometric interpretation as an integral over some  {\it pseudo-trajectories}.
As explained in~\cite{lanford,CIP,GSRT}, in this formulation the characteristics associated with the operators~$ {\bf S}_{i} (t_{i-1}-t_{i}) $ and~$ {\bf S}^0_{i} (t_{i-1}-t_{i}) $ are followed {backwards} in time between two consecutive times~$t_i$ and~$t_{i-1}$, and the collision terms (associated with~${C}_{i,i+1}$ and~${C}_{i,i+1}^0$) are seen as source terms in which  ``additional particles" are ``adjoined" to the system.  
The main heuristic idea is that the pseudo-trajectories associated to both hierarchies can be coupled precisely if no recollisions occur in the BBGKY hierarchy. The core of the proof will be to obtain an upper bound on the occurrence of recollisions and to show that their contribution is negligible.
 
\smallskip

\noindent In order to prevent recollisions in the time interval~$[t_{i+1},t_{i}]$, some bad sets in   phase space must be removed. Following the approach developed  in~\cite{GSRT}, a geometrical  control of the  trajectories in the torus (stated in Lemma~\ref{geometric-lem1}) enables us  to define bad sets, outside of which the flow ${\bf S}$ between two collision times is  the free flow ${\bf S}^0$ (see Proposition~\ref{geometric-prop}).
Finally, the geometric controls are used  in Section
\ref{subsec: Estimates} to obtain quantitative estimates on the collision integrals 
where those bad sets have been removed.

\subsection{Reformulation in terms of pseudo-trajectories} 
\label{subsec: pseudo-trajectories}

We consider one term of the sum~$f^{(1,K)}_N(t)$ in~\eqref{eq: serie f1developpee} and show how it can be interpreted in terms of pseudo-trajectories.
Given the indices $J = (j_1, \dots, j_K)$, we set 
\begin{align}
\label{eq: 1 terme de la serie}
 F^{(1,K)}_N (J) \; (t,z_1) & :=  Q_{1,J_1} (h )Q_{J_1,J_2} (h )
 \dots  Q_{J_{K-1},J_K} (h ) \, f^{0(J_K)}_N  \\
& =  \int_{{\mathcal T}_J (h)}  \, dT \;   {\bf S}_1(t-t_1) C_{1,2}  {\bf S}_{2}(t_1-t_2) C_{2,3}   
\dots  {\bf S}_{J_K}(t_{J_K-1})     f^{0(J_K)}_N
\nonumber
\end{align}
where the time integral is over the collision times  $T=(t_1, \dots, t_{J_{K-1}})$  taking values in 
\begin{equation}
\label{eq: collision times}
{\mathcal T}_J (h) :=   \Big\{ T = (t_1,\dots,t_{J_{K-1}}) \,  \Big| \,   t_i < t_{i-1} \,  \,  \mbox{and} 
\,  (t_{J_k}, \dots, t_{J_{k-1}+1}) \in [t-k h, t - (k-1)h]    \Big\} \, .
\end{equation}
\noindent In the following we denote by~${\bf \Psi}_s$ the~$s$-particle flow.
Given~$z_1 = (x_1,v_1) \in \T^d \times \R^d$ and a time~$u \in [t_1, t]$, we call 
$z_1(u) = {\bf \Psi}_1(u) z_1$ the coordinates following the backward flow ${\bf \Psi}_1$ of one particle. 
The first collision operator $C_{1,2}$ is interpreted as the adjunction at time $t_1$ of a new particle at $x_1(t_1) + \eps \nu_{2}$ for a deflection angle~$\nu_{2} \in {\mathbf S}^{d-1}$
and with a velocity~$v_{2} \in \R^d$. 
The new pair of particles $Z_{2}$ will be evolving according to the backward 2-particle flow 
${\bf \Psi}_2$ during the time interval $[t_2,t_1]$ starting at $t_1$ from
\begin{align}
\label{eq: 2 cas}
\begin{cases}
Z_{2}(t_1) = \big( (x_1(t_1), v_1), (x_1(t_1) + \eps \nu_{2}, v_2) \big)
\text{ in the pre-collisional case~$(v_2-v_1)\cdot \nu_2  <0$} \\
Z_{2}(t_1) = \big( (x_1(t_1), v_1^*), (x_1(t_1) + \eps \nu_{2}, v_2^* ) \big)  
\text{ in the post-collisional case~$(v_2-v_1)\cdot \nu_2  >0$  , }
\end{cases}
\end{align}
the latter case corresponding to the scattering.

\medskip
\noindent Iterating this procedure,
a branching process is built inductively by adding a particle labelled~$i+1$ at time $t_i$ to the particle $z_{m_i}(t_i)$ where $m_i \leq i$ is chosen randomly among the first~$i$ particles. 
Given a deflection angle $\nu_{i+1}$ and a velocity $v_{i+1}$, the velocity of the particles~$z_{m_i}$ and $z_{i+1}$ at time $t_i$ are updated according to the pre-collisional or post-collisional rule as in~\eqref{eq: 2 cas}
\begin{align*}
%\label{eq: 2 cas}
\begin{cases}
Z_{i+1}(t_i) = \big( \{ z_j (t_i) \}_{j \not = m_i}, (x_{m_i}(t_i), v_{m_i}(t_i)), (x_{m_i}(t_i) + \eps \nu_{{i+1}}, v_{i+1}) \big)\\
\hskip3cm \text{ in the pre-collisional case~$(v_{i+1} (t_i) -v_{m_i} )\cdot \nu_{i+1}  <0$} \\
Z_{i+1}(t_i) = \big(  \{ z_j (t_i) \}_{j \not = m_i}, (x_{m_i}(t_i), v_{m_i}^*(t_i)), (x_{m_i}(t_i) + \eps \nu_{i+1}, v_{i+1}^* ) \big)  \\
\hskip3cm 
\text{ in the post-collisional case~$(v_{i+1}(t_i) -v_{m_i} )\cdot \nu_{i+1}  >0$  .}
\end{cases}
\end{align*}
Let $Z_{i+1}$ denote the $i+1$ components after the $i^{\text{th}}$-collision.
The evolution of~$Z_{i+1}$ follows the flow of the backward transport ${\bf \Psi}_{i+1}$ during the time interval $[t_{i+1},t_i]$. From \cite{simonella} (see also Remark \ref{bbgky-def}), one can check that 
 ${\bf \Psi}_{i+1}$ is well defined up to a set of measure 0.
 In the following, we shall use the name {\it collision} to describe the creation of a particle and {\it recollision} if two  particles collide in the flow ${\bf \Psi}_{i+1}$.
\begin{figure}[h] %  figure placement: here, top, bottom, or page
\centering
\includegraphics[width=3in]{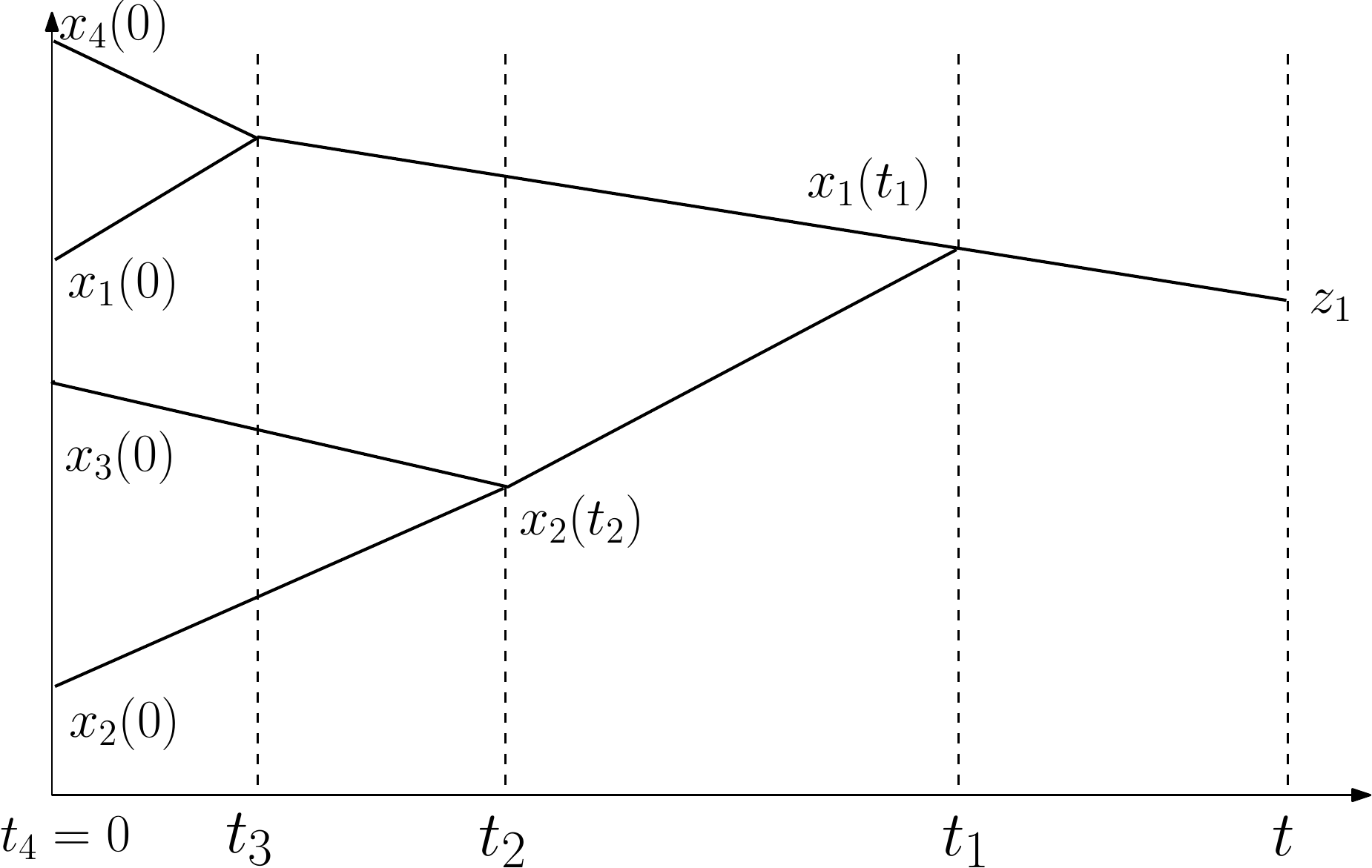} 
\caption{\small 
A collision tree is represented with 3 collisions. The velocities $(v_1,v_2)$ at time $t_1$ are pre-collisional and the first particle keeps its velocity $v_1$ after the collision.  At time $t_3$, the first particle is selected $m_3 =1$ and the velocity $v_1$ is modified into $v_1^*$ according to the post-collisional rule.
}
\label{fig: branchement}
 \end{figure}

\medskip
\noindent To summarize, pseudo-trajectories do not involve physical particles. They are a geometric interpretation of the iterated  Duhamel  formula in terms of a branching process 
flowing backward in time and determined by 
\begin{itemize}
\item 
the collision times~$T = (t_{1 },\dots,t_{J_K-1})$ which are interpreted as branching times
\item the labels of the collision particles~$m= (m_1, \dots , m_{J_K-1})$ from which branching occurs 
and which take values in the set
$$
\mathcal{M}_J := \big\{ m= (m_1, \dots , m_{J_K-1})  \, ,  \quad 1\leq m_i \leq i \big\}
$$
\item the coordinates of the initial particle $z_1$ at time $t$
\item the velocities~$v_2,\dots,v_{J_K}$ in $\R^d$ and deflection angles~$\nu_2,\dots,\nu_{J_K}$ in
${\mathbf S}_1^{d-1}$ for each additional particle.
\end{itemize}
The integral \eqref{eq: 1 terme de la serie} can be evaluated by integrating $f^{0(J_K)}_N$ on the value of the pseudo-trajectories $Z_{J_K} (0)$ at time 0 
\begin{equation*}
F^{(1,K)}_N (J)  =  \sum_{m \in \mathcal{M}_J} 
\left( \frac{\eps ^{d-1}}{\alpha} \right)^{J_K-1} \frac{(N-1)!}{ (N-J_K )!} \; F^{(1,K)}_N (J,m)
\end{equation*}
where
\begin{align}
\label{BBGKYfunctionalapprox 0}
F^{(1,K)}_N (J,m) \, (t,z_1) := 
      \int_{{\mathcal T}_J(h) }  dT
    \int_{({\mathbf S}^{d-1} \times \R^d)^{J_K-1}} \!  d \bar \nu \, d \bar V 
    \; \cA(T, z_1, \bar \nu, \bar V) \;    f_N^{0(J_K)}    (Z_{J_K} (0)) 
\end{align}
and with 
\begin{equation}
\label{eq: product velocities}
\cA(T, z_1,\bar \nu, \bar V) := \prod_{i=1}^{J_K - 1} ((v_{i+1} - v_{m_i} (t_i))\cdot \nu_{i+1} )
\quad \text{and} \quad
\begin{cases}
\bar \nu = \{ \nu_2, \dots, \nu_{J_K} \}\\
\bar V = \{ v_2, \dots, v_{J_K} \}
\end{cases}
.
\end{equation}
The definition of $\cA$ requires to compute the whole pseudo-trajectory on the time interval $[0,t]$ starting at $z_1$ in order to be able to sample the velocities at the different times $T = (t_1, \dots, t_{J_K -1})$.
Note that the contributions of the gain and loss terms in the collision operator $C_{k,k+1}$ are taken into account by the sign of $\big((v_{k+1} - v_{m_{k}}(t_{k}))\cdot \nu_{k+1} \big)$.

\medskip

\noindent In the same way, a branching process associated with the Boltzmann hierarchy can be constructed: given an initial particle $z_1^0 = (x_1^0,v_1^0)$ at time $t$, a collection of collision times~$T = (t_{1 },\dots,t_{J_K-1})$ and   labels of the collision   particles~$m= (m_1, \dots , m_{J_K-1}) \in {\mathcal M}_J$
as well as a collection of velocities~$v_2,\dots,v_{J_K}$ and deflection angles~$\nu_2,\dots,\nu_{J_K}$, 
the $(k+1)^{\text{th}}$ particle~$z^0_{k+1}$ is added at time $t_k$ at the position~$x_{m_k}^0(t_k)$
of  the  particle $m_k$ and their velocities are adjusted according to the type of the collision
$$
\begin{aligned}
\begin{cases}
z_{m_k}^0(t_k) =  \big( x_{m_k}^0(t_k), v_{m_k}(t_k) \big), \; 
z^0_{k+1}(t_k) = \big( x^0_{m_k}(t_k), v_{k+1} \big)
\quad \text{if~$(v_{k+1}  - v_{m_k} (t_k) )\cdot \nu_{k+1}  <0$} \\
z_{m_k}^0(t_k) =  \big( x_{m_k}^0(t_k^+), v_{m_k}^*(t_k^+)  \big), \; 
z^0_{k+1}(t_k) = \big( x^0_{m_k}(t_k^+), v_{k+1}^* \big)
\quad \text{if~$(v_{k+1}-v_{m_k} (t_k^+) )\cdot \nu_{k+1}  >0$ .}
\end{cases}
\end{aligned}
$$
Then, the corresponding pseudo-trajectory~$Z_{k+1}^0$ evolves according to the backward free flow denoted by ${\bf \Psi}^0_{k+1}$ during the time interval $[t_{k+1},t_k]$ until the next particle creation. As the particles are points, no recollision occurs in this branching process. 
Notice that~$u \mapsto~Z_{k+1}^0(u )$ is pointwise left-continuous on~$[0,t_k]$.
 
\noindent The counterpart of the integral \eqref{eq: 1 terme de la serie} in the series 
$g^{(1,K)}_\alpha(t)$ in \eqref{eq: serie f1developpeeboltz}
can be formally rewritten as follows
\begin{align}
\label{eq: 1 terme de la serie 0}
G^{(1,K)} (J) \; (t,z_1) & =  \int_{{\mathcal T}_J (h)}  \, dT \;   {\bf S}^0_1(t-t_1) C^0_{1,2}   {\bf S}^0_{2}(t_1-t_2) C^0_{2,3}   
\dots  {\bf S}^0_{J_K}(t_{J_K-1})     g^{0(J_K)}     \nonumber\\
&=  \sum_{m \in \mathcal{M}_J}   G^{(1,K)} (J,m)
\end{align}
where the integral is over the pseudo-trajectories
\begin{equation}
\label{botzfunctionalapprox 0}
\begin{aligned}
& G^{(1,K)} (J,m) \, (t,z_1) := 
     \int_{{\mathcal T}_J(h) }  dT     \int_{({\mathbf S}^{d-1} \times \R^d)^{J_K-1}} \!  d \bar \nu \, d \bar V 
    \; \hat \cA(T, z_1,\bar \nu, \bar V) \;  
%    \int_{{\mathbf S}^{d-1} \times \R^d} \!  d\nu_{2} dv_{2}
%  ((v_{2} - v_{m_1} (t_1))\cdot \nu_{2} ) \\
%&  \qquad       \dots    \int_{{\mathbf S}^{d-1} \times \R^d}   \!  d\nu_{J_K} dv_{J_K}
%  ((v_{J_K} - v_{m_{J_K - 1}}(t_{J_K -1}))\cdot \nu_{J_K} ) \;    
  g^{0(J_K)}    (Z^0_{J_K} (0)) ,
\end{aligned}
\end{equation}
with $\hat \cA$ defined as in \eqref{eq: product velocities} but with respect to the Boltzmann hierarchy pseudo-trajectories.

\medskip 
 
\begin{figure}[h] %  figure placement: here, top, bottom, or page
   \centering
  \includegraphics[width=4in]{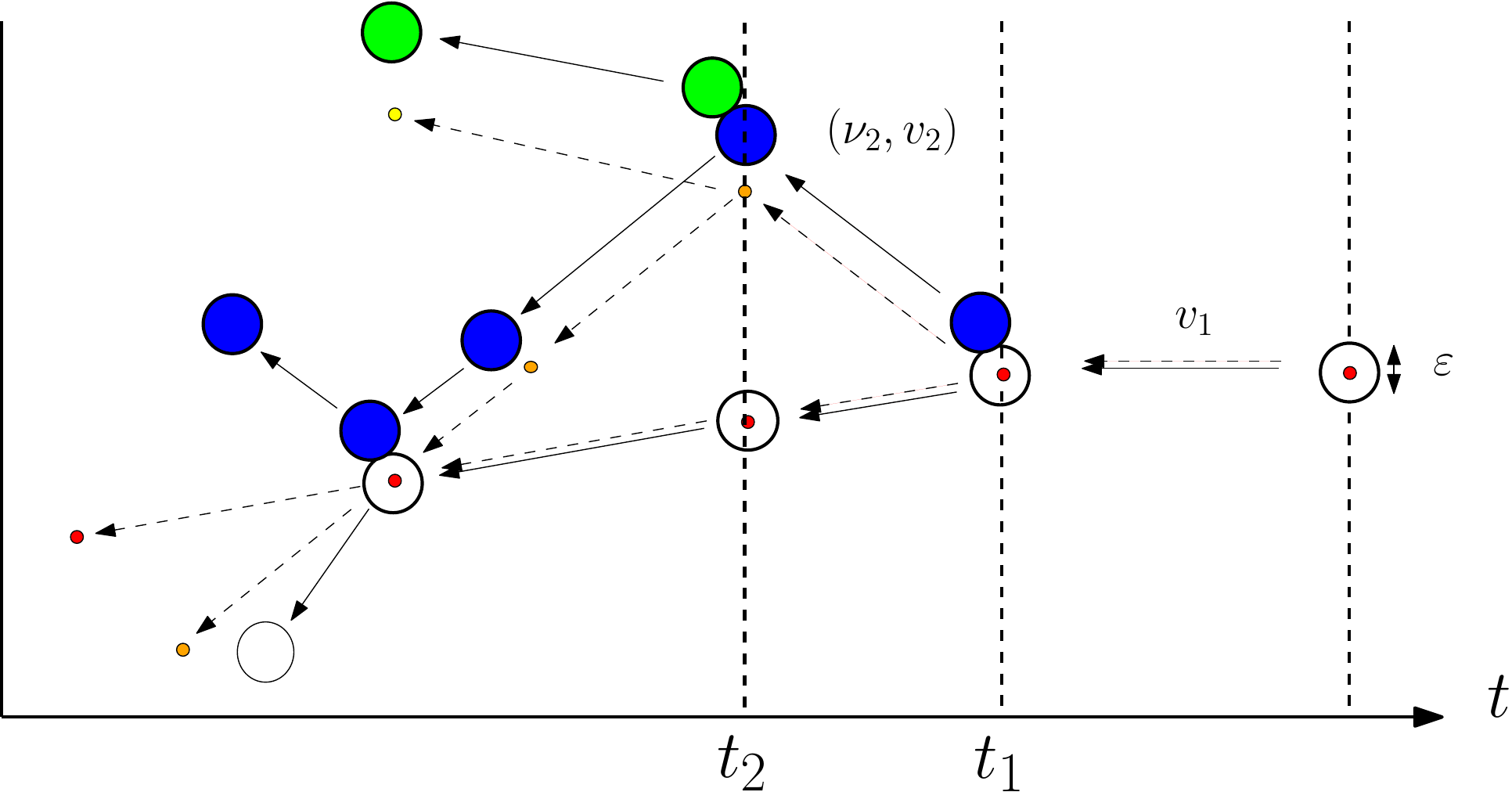} 
\caption{\small  The first stages of both pseudo-trajectories are depicted up to the occurence of a recollision. 
The BBGKY pseudo-trajectories are represented with plain arrows, whereas the Boltzmann
pseudo-trajectories correspond to the dashed arrows. 
At time $t$, the particle with label $1$ in the BBGKY hierarchy is  a ball of radius $\eps$ centered at position $x_1$ and the particle in the Boltzmann hierarchy is depicted as a point located at $x_1^0 = x_1$.
At time $t_1$ the second particle is added and at time $t_2$ the third.
Both hierarchies are coupled, but a small error in the particle positions of order $\eps$ can occur at each collision. 
In this figure, a recollision between the first and the second particle of the BBGKY pseudo-trajectories occurs and after this recollision the Boltzmann and the BBGKY pseudo-trajectories are no longer close to each other.
Indeed the BBGKY trajectories are deflected after the recollision, instead  the ideal particles do not collide and follow a straight line (see the dashed arrows).
Note that before the recollisions the trajectories of $z_1$ and $z_1^0$ are identical and therefore the plain and the dashed arrows overlap.
}
\label{fig: recollision}
 \end{figure}

\noindent To show that $F^{(1,K)}_N (J,m)$ and $G^{(1,K)} (J,m)$  are close to each other when $N$ diverges, we shall prove that the pseudo-trajectories $Z$ and $Z^0$ can be coupled in order to remain very close  to each other  up to a small error  (see Figure \ref{fig: recollision}) 
\begin{itemize}
\item due to the micro-translations $\eps \nu_{k+1}$ of the added particle at each collision time $t_k$
\item excluding the possible recollisions on the interval $]t_k, t_{k-1}[$ along the flow ${\bf S}_k$,  which do not occur for the free flow ${\bf S}^0_k$.
\end{itemize}
The proof of  the convergence   follows the arguments of \cite{GSRT}. 
This will be achieved by constructing in \eqref{eq: bad set}, a set of deflection angles and velocities $(\cB (J, T, m))^c \subset  \big( {\mathbf S}^{d-1} \times \R^d \big)^{J_k -1}$ such that the pseudo-trajectories $Z$ induced by this set have no recollisions and therefore remain very close to the pseudo-trajectories $Z^0$ associated to the free flow. 
Furthermore, the measure of $\cB (J, T, m)$ tends to 0 when $N$ goes to infinity. Finally, in Section \ref{subsec: Estimates}, all the estimates will be combined to derive a quantitative bound on $F^{(1,K)}_N (J,m) - G^{(1,K)} (J,m)$.

\subsection{Reduction to non-pathological trajectories}

\subsubsection{The elementary step}
\label{subsec: The elementary step}

The set of good configurations with $k$ particles will be such that the particles remain at a distance $\eps_0 \gg \eps$ for a time $t$, i.e. that they belong to the set  
$$
\cG_k(\eps_0) := \left\{ Z_k \in \T^{dk} \times \R^{dk}\; 
\Big| \; \forall u \in [0,t]  ,\quad \forall i\neq j ,\quad  d(x_i -u \, v_i,  x_j- u \, v_j) \geq \eps_0 \right\}
$$
where $d$ denotes the distance on the torus $\T^d$. 
For particles in $\cG_k(\eps_0)$, the transport ${\bf \Psi}_k$ coincides with the free flow. 
Fix $\bar a \ll \eps_0$.
Thus, if at time $t$ the configurations $Z_k$, $Z_k^0$ are  such that 
\begin{equation}
\label{eq: example}
\forall i \leq k \, , \qquad |x_i - x_i^0| \leq \bar a \, ,
\qquad %\text{and} \quad
v_i = v_i^0
\end{equation}
and that $Z_k^0$ belongs to $\cG_k(\eps_0)$, then the configurations ${\bf \Psi}_k (u) Z_k$, ${\bf \Psi}^0_k (u) Z_k^0$ will remain at distance less than $\bar a$ for $u \in [0,t]$.

\medskip

\noindent We are going to show that the good configurations are stable by adjunction of a 
$(k+1)^{\text{th}}$-particle next to the particle labelled by $m_k \leq k$. 
More precisely, let $Z_k^0 = (X_k^0, V_k)$ be in~$\cG_k(\eps_0)$ and  
$Z_k = (X_k, V_k)$ with positions close to $X_k^0$ and same velocities (cf. \eqref{eq: example}). Then,
by  choosing the velocity $v_{k+1}$ and the deflection angle 
$\nu_{k+1}$ of the new particle $k+1$ outside a bad set $\cB^{m_k}_k (Z_k^0)$, both configurations
$Z_k$ and $Z_k^0$ will remain close to each other.
Of course, immediately after the adjunction, the particles $m_k$ and $k+1$ will not be at distance~$\eps_0$, but~$v_{k+1}, \nu_{k+1}$ will be chosen such that the particles drift rapidly far apart and
after a short time $\delta>0$ the configurations $Z_{k+1}$ and $Z^0_{k+1}$
will be again in the good sets $\cG_{k+1} (\eps_0/2)$ and~$\cG_{k+1} (\eps_0)$.

\medskip

\noindent This stability result was obtained in \cite{GSRT} and is stated below.
We shall restrict to bounded velocities taking values in the ball $B_E := \big \{ v \in \R^d \, , \,  |v|  \leq E  \big\}$ for a given large parameter~$E>0$ to be tuned later on.

\begin{Prop}[\cite{GSRT}]
\label{geometric-prop}
We   fix   parameters~$ \bar a,\eps_0, \delta$     such that
\begin{equation}\label{sizeofparameters}
%\cancel{  \bar  a \ll \eps_0 \ll \delta E }
A^{K+1} \eps \ll \bar  a \ll \eps_0 \ll \min(\delta E,1)    \, .
\end{equation}
Given~$Z_k^0 = (X_k^0, V_k) \in \cG_k(\eps_0)$ and  $m_k \leq k$, there is a 
subset~$\cB^{m_k}_k (Z_k^0)$ 
of~${\mathbf S}^{d-1} \times B_E$  of small measure
\begin{equation}
\label{pathological-size}
\big |\cB^{m_k}_k ( Z_k^0) \big |  \leq  Ck\left( E^d  \left( {\bar a \over \eps_0}\right)^{d-1} +E^d  (E t  )^d  \eps_0  ^{d-1}+E\left({ \eps_0 \over \delta}\right)^{d-1}  \right)
\end{equation}
such that good configurations close to $Z_k^0 $  are stable by adjunction of a collisional particle close to the particle~$x^0_{m_k}$ %and not belonging to $\cB_k( Z_k^0)$, 
in the following sense. 

\smallskip
\noindent 	 
Let~$Z_k = (X_k, V_k)$ be a configuration of $k$ particles satisfying \eqref{eq: example}, i.e.
$| X_k- X_k^0  | \leq \bar  a$. 
Given~$(\nu_{k+1},v_{k+1}) \in ({\mathbf S}^{d-1} \times  B_E) \setminus  \cB^{m_k}_k( Z_k^0)$,
a new particle with velocity $v_{k+1}$ is added at $x_{m_k} + \eps \nu_{k+1}$ to $Z_k$ and at $x_{m_k}^0$ to $Z_k^0$.
Two possibilities may arise %depending on the velocity $v_{k+1}$

$ \bullet $ For a pre-collisional configuration~$\nu_{k+1} \cdot (v_{k+1} - v_{m_k})< 0$ then 
\begin{equation}
\label{taugeq0precoll}
\forall u \in ]0,t]  \, , \quad \left\{ \begin{aligned}
& \forall i \neq j \in [1,k] \, , \quad  d (x_i -u \, v_i, x_j - u \, v_j )  > \eps \, ,\\
& \forall j \in [1,k] \, , \quad  d (x_{m_k} + \eps \nu_{k+1} - u \,v_{k+1}, x_j - u\,  v_j )  > \eps \, .
\end{aligned}
\right.
\end{equation}
Moreover after the time~$\delta$, the~$k+1$ particles are in a good configuration
\begin{equation}
\label{taugeqdelta2precoll}
\forall u \in [\delta ,t]  \, , \quad \left\{ \begin{aligned}
& (X_k - u V_k   , V_k , x_{m_k} + \eps \nu_{k+1} - u \, v_{k+1} , v_{k+1}) \in \cG_{k+1} (\eps_0/2)\\
& (X^0_k - u V_k   , V_k ,  x^0_{m_k} - u \, v_{k+1} , v_{k+1}) \in \cG_{k+1} (\eps_0)\, .
\end{aligned}
\right.
\end{equation}

$ \bullet$ For a post-collisional configuration~$\nu_{k+1} \cdot (v_{k+1} - v_{m_k}) > 0$ then 
the velocities are updated 
\begin{equation}\label{taugeq0precoll}
\forall u \in ]0,t]  \, , \quad \left\{ 
\begin{aligned}
&\forall i \neq j \in [1,k] \setminus \{m_k\} \, , \quad  d (x_i - u \, v_i, x_j - u \, v_j )  > \eps \, ,\\
& \forall j \in [1,k] \setminus \{m_k\} \, , \quad d (x_{m_k} + \eps \nu_{k+1} - u \, v_{k+1}^*, x_j - u \, v_j )  > \eps \, ,\\
&\forall j \in [1,k] \setminus \{m_k\} \, , \quad d (x_{m_k}  - u \, v_{m_k}^*, x_j - u \,  v_j )  > \eps \, ,\\
&
d (x_{m_k}  - u \, v_{m_k}^*, x_{m_k} + \eps \nu_{k+1} - u \, v_{k+1}^*) > \eps 
\, .
\end{aligned}
\right.
\end{equation}
Moreover after the time~$\delta$, the~$k+1$ particles are in a good configuration
\begin{align}\label{taugeqdelta2precoll}
&\forall u\in [\delta, t], \nonumber \\
&\left\{ \begin{aligned}
& \big( \{ x_{j} - u \, v_{j}, v_j  \}_{j \not = m_k}, x_{m_k} - u \, v_{m_k}^* ,v_{m_k}^*,  x_{m_k} + \eps \nu_{k+1} - u \, v_{k+1}^* , v_{k+1}^* \big) \in \cG_{k+1} (\eps_0/2),\\
& \big( \{  x^0_{j} - u \, v_{j}, v_j  \}_{j \not = m_k},  x^0_{m_k}- u \, v_{m_k}^* ,v_{m_k}^*,  
 x^0_{m_k}  - u \, v_{k+1}^* , v_{k+1}^*\big) 
\in \cG_{k+1} (\eps_0)\, .
\end{aligned}
\right.
\end{align}
\end{Prop}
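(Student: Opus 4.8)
The plan is to reproduce the single-collision geometric argument of \cite{GSRT}. Since adjoining the particle labelled $k+1$ is a local operation in time, everything reduces to controlling the newly created particle (and, in the post-collisional case, the re-velocitied particle $m_k$) against the $k$ particles already present, by removing from ${\mathbf S}^{d-1}\times B_E$ an explicit small set of scattering parameters $(\nu_{k+1},v_{k+1})$ outside of which no recollision can occur.

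First I would dispose of the old particles. As $Z_k^0\in\cG_k(\eps_0)$, along the backward free flow every pair $i\neq j$ in $[1,k]$ stays at distance $\geq\eps_0$ in the $Z_k^0$-configuration, hence at distance $\geq\eps_0-2\bar a\geq\eps_0/2>\eps$ in the $Z_k$-configuration because $|X_k-X_k^0|\leq\bar a\ll\eps_0$ by \eqref{eq: example}; in the post-collisional case the speed of $m_k$ is unchanged, the pairs not involving $m_k$ are untouched, and the pairs with $m_k$ are treated below with $k+1$. Next, writing $w:=v_{k+1}-v_{m_k}$ and $c:=\nu_{k+1}\cdot w$, the squared distance between particles $k+1$ and $m_k$ along the backward free flow equals $\eps^2-2u\eps c+u^2|w|^2$; in the pre-collisional case $c<0$ so this is $\geq\eps^2$ for all $u>0$ and $\geq u^2|w|^2$, and in the post-collisional case scattering gives $\nu_{k+1}\cdot(v_{k+1}^*-v_{m_k}^*)=-c<0$ and $|v_{k+1}^*-v_{m_k}^*|=|w|$, so the same bound holds with starred velocities; on the Boltzmann side the created particle starts exactly at $x_{m_k}^0$, so this pair separates like $u|w|$. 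Consequently, as soon as $|w|\geq\eps_0/\delta$, the created pair is in $\cG_{k+1}(\eps_0/2)$, resp. $\cG_{k+1}(\eps_0)$, for $u\geq\delta$.

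Then I would set $\cB_k^{m_k}(Z_k^0)$ to be the union, over $j\in[1,k]\setminus\{m_k\}$, of three pieces. Piece (a): the near-grazing set where $|c|$ is below a threshold of order $\bar a/\delta$; excluding it makes the pre/post dichotomy and the scattering map stable under the $\bar a$-perturbation of positions, and its measure is $O(E^d(\bar a/\eps_0)^{d-1})$. Piece (b): the slow-escape set $|w|<\eps_0/\delta$ together with directions nearly tangent to the collision axis, which secures the preceding paragraph, of measure $O(E(\eps_0/\delta)^{d-1})$. Piece (c): the recollision tube, namely those $(\nu_{k+1},v_{k+1})$ for which $u\mapsto x_{m_k}+\eps\nu_{k+1}-uv_{k+1}$ (or, post-collisionally, the trajectory of the scattered $m_k$) comes within $\eps_0$ of $u\mapsto x_j-uv_j$ on the torus for some $u\in\,]0,t]$. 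Unfolding $\T^d$, only $O((Et)^d)$ lattice translates are reachable at speed $\leq E$ in time $t$; for each translate the bad velocities, as $u$ varies, sweep a tube of velocity-measure $O(\eps_0^{d-1})$, so this piece has measure $O(E^d(Et)^d\eps_0^{d-1})$. Summing over the at most $k$ indices $j$ yields \eqref{pathological-size}.

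Finally, for $(\nu_{k+1},v_{k+1})$ outside $\cB_k^{m_k}(Z_k^0)$ one assembles the pieces: by (a), $Z_k$ and $Z_k^0$ undergo the same collision type with $O(\bar a/\eps_0)$-close scattered velocities; by (b) and the previous paragraph the created pair stays at distance $>\eps$ for $u>0$ and is back in a good configuration after time $\delta$; by (c), together with the fact that the two configurations stay within $\bar a+\eps\ll\eps_0$ of one another, the created particle (and, post-collisionally, the scattered $m_k$) stays at distance $>\eps$ from every other trajectory for $u\in\,]0,t]$ and at distance $\geq\eps_0$, resp. $\eps_0/2$, for $u\in[\delta,t]$; this gives \eqref{taugeq0precoll} and \eqref{taugeqdelta2precoll} in both cases. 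I expect the recollision-tube estimate (c) to be the main obstacle: on the torus one must count the lattice translates a trajectory of speed $\leq E$ can reach in time $t$ — which produces the $(Et)^d$ factor and is precisely why the statement depends on $t$ — and then check that the map from (time, direction along the trajectory) to the scattered velocity is non-degenerate, so that the tube has velocity-measure only $O(\eps_0^{d-1})$ per translate; this uses the lower bound $|w|\gtrsim\eps_0/\delta$ and the removal of near-tangential directions in (b). A related point is that \eqref{sizeofparameters} requires $A^{K+1}\eps\ll\bar a$ and not merely $\eps\ll\bar a$, because the $\eps$-error created at this single collision must remain below $\bar a$ after accumulating over all the $\leq J_K-1$ collisions of a pseudo-trajectory, so that the good-configuration hypothesis \eqref{eq: example} survives all the way down the tree when the proposition is iterated.
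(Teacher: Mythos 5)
Your overall architecture is the right one (and essentially the one the paper uses, via Lemma~\ref{geometric-lem1} proved in Appendix B): reduce to pairwise controls along backward free flow, treat the freshly created pair $(m_k,k+1)$ by hand using $\nu_{k+1}\cdot w<0$ after scattering and $|w^*|=|w|$, unfold the torus and count the $O((Et)^d)$ reachable lattice images, and iterate with $A^{K+1}\eps\ll\bar a$ so that \eqref{eq: example} survives down the tree. But the key quantitative step is wrong as you set it up. Your piece (c) excludes all $(\nu_{k+1},v_{k+1})$ for which the new (or rescattered) particle comes within $\eps_0$ of particle $j$ at \emph{some} $u\in\,]0,t]$, and you claim each lattice image contributes a tube of velocity measure $O(\eps_0^{d-1})$. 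For the nearest image this is false: at the creation time the relative position $x^0_{m_k}-x^0_j$ is only guaranteed to have norm $\geq\eps_0$, and a ball of radius $3\eps_0$ centred at a point at distance $\approx\eps_0$ from the origin is seen from the origin under a solid angle of order $1$ (it can even contain the origin), so the set of velocities you are removing has measure of order $E^d$, not $E^d\eps_0^{d-1}$, and \eqref{pathological-size} is lost. This is exactly why the paper's Lemma~\ref{geometric-lem1} splits the exclusion into two sets with different scales and different time ranges: a set $K$ preventing approaches within $\eps$ for \emph{all} $u\in[0,t]$, whose near-image contribution is a cone of aperture $(\bar a/\eps_0)^{d-1}$ (target radius $3\bar a$, initial distance $\geq\eps_0$), and a set $K_\delta$ preventing approaches within $\eps_0$ only for $u\in[\delta,t]$, whose near-image contribution is a cylinder condition $u\,|(v_1-v_2)\cdot n|\leq 3\eps_0$ giving measure $E(\eps_0/\delta)^{d-1}$; only the far images ($|x^0_1-x^0_2+k|\geq 1/4$) give the $\eps_0^{d-1}$-per-image count you invoke. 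Since your final assembly needs precisely these two statements (distance $>\eps$ on $]0,t]$, distance $>\eps_0$ on $[\delta,t]$), the fix is to replace your single recollision tube by the two sets of Lemma~\ref{geometric-lem1} applied to $v_{k+1}-v_j$ and, post-collisionally, to $v^*_{k+1}-v_j$ and $v^*_{m_k}-v_j$ (with the standard change of variables to carry the measure estimate through the scattering map).

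Two secondary points. First, your piece (a) is not needed: the deflection angle $\nu_{k+1}$ is a free integration parameter used identically in both hierarchies, so $Z_k$ and $Z_k^0$ undergo the same collision type and have \emph{exactly} equal scattered velocities (as stated in Proposition~\ref{translation-lem}), not merely $O(\bar a/\eps_0)$-close ones; moreover the measure of your grazing set $\{|\nu_{k+1}\cdot w|\lesssim\bar a/\delta\}$ is of order $E^{d-1}\bar a/\delta$, which is not dominated by $E^d(\bar a/\eps_0)^{d-1}$ for all admissible parameters under \eqref{sizeofparameters}, so keeping it would again jeopardise \eqref{pathological-size}. Second, in the exclusion $|w|\geq\eps_0/\delta$ you should take a slightly larger threshold (say $2\eps_0/\delta$) so that $\delta|w|-\eps\geq\eps_0$, and note that this ball has measure $(\eps_0/\delta)^d\leq E(\eps_0/\delta)^{d-1}$, which is how it fits into the third term of \eqref{pathological-size}.
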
 
\noindent Proposition \ref{geometric-prop} is the elementary step for adding a new particle. 
In Section \ref{traj-def}, we are going to show how this step can be iterated 
in order to build inductively good pseudo-trajectories~$Z$ and $Z^0$.
Note that after adding a new particle, the velocities remain identical at each time in both configurations, but their positions differ due the exclusion condition in the BBGKY hierarchy which induces a shift of $\eps$ at each creation of a new particle (see Figure \ref{fig: recollision}).
%In this construction only the positions of $x_1$ and $x_1^0$ coincide.

\medskip

\noindent We refer to \cite{GSRT} for a complete proof of Proposition \ref{geometric-prop} and simply recall that it can be obtained from  the following control on free trajectories.  
\begin{Lem}\label{geometric-lem1}
  Given~$t>0$,   and $ \bar a>0$ satisfying~$A^{K+1} \eps \ll \bar  a \ll \eps_0\ll  \min(\delta E, 1)$,
   consider two points~$x^0_1,x^0_2$ in~$\T^d$ such that $d(x^0_1, x^0_2)\geq \eps_0$,  and a velocity~$ v_1 \in B_E$.
 Then there exists a subset~$K(x^0_1- x^0_2, \eps_0, \bar a) $ of~$\R^d$   
 with  measure bounded by
 $$ |K( x^0_1- x^0_2, \eps_0, \bar a) | \leq  CE^d \left(  \left(\frac{\bar a}{\eps_0}\right)^{d-1} +  (E t  )^d \;  {\bar a} ^{d-1}\right) $$
 and a subset $K_\delta( x^0_1- x^0_2, \eps_0, \bar a) $ of~$\R^d$, the measure of which satisfies
 $$  
 |K_\delta(x^0_1- x^0_2, \eps_0, \bar a) | \leq CE
  \left(  
  \left(\frac{\eps_0 }{\delta}\right)^{d-1} 
  +  (E t  )^d E^{d-1} \eps_0  ^{d-1}
 \right) 
 $$
 such that for any~$ v_2\in B_E$ and~$x_1, x_2$ such that $|x_1 - x^0_1| \leq \bar a$, $|x_2 - x^0_2| \leq \bar a$, the following results hold :

\noindent
 $ \bullet $ $ $ If   $v_1 -v_2 \not \in K( x^0_1- x^0_2, \eps_0, \bar a) $, then 
  $$
   \forall u \in [0,t]  \,, \quad d(x_1-  u \, v_1, x _2 - u \, v_2)>\eps 
   $$
 $ \bullet $ $ $ If  $v_1 -v_2 \notin K_\delta( x^0_1- x^0_2, \eps_0, \bar a) $
   $$
   \forall u \in  [\delta, t]   \,, \quad d(x_1- u \, v_1, x _2- u \, v_2) >\eps_0 \, .
 $$
 \end{Lem}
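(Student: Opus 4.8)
\textbf{Plan of proof for Lemma~\ref{geometric-lem1}.}
The statement is a purely geometric estimate on two free trajectories on the torus, so the plan is to reduce everything to a single relative coordinate and to count, for how many relative velocities the relative trajectory can come back near the origin. Write $y^0 := x^0_1 - x^0_2$ and $w := v_1 - v_2$, and set $y := x_1 - x_2$, so that $|y - y^0| \leq 2\bar a$; the relative motion is $u \mapsto y - u\,w$ on $\T^d$, and since $|w| \leq 2E$ we have $|v_1|,|v_2| \leq E$ controlling the number of times this line wraps around the torus. The first bullet asks for the set of $w$ such that $d(y - uw, 0) \leq \eps$ for some $u \in [0,t]$; the second asks for the set of $w$ such that $d(y-uw,0) \le \eps_0$ for some $u \in [\delta, t]$, where now we may use that at $u=0$ the points are $\eps_0$-separated and that $u \ge \delta$ forces the relative displacement $|uw|$ to be bounded below when $|w|$ is not too small.

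\textbf{First bullet.}
I would split $[0,t]$ into $O(Et)$ subintervals on each of which $u \mapsto y - uw$ does not wind around the torus, so that the torus distance coincides with the Euclidean distance to a fixed lift. On such an interval the condition ``$d(y - uw,0) \leq \eps$ for some $u$'' says that the segment $\{y - uw\}$ enters the $\eps$-ball around a lattice point; since $|y - y^0| \le 2\bar a$ and the starting configuration has $d(y^0,0)\ge \eps_0 \gg \bar a \gg \eps$, this can only happen for $u$ bounded away from $0$, and then elementary geometry (a tube of radius $\eps$ around the $\eps_0$-separated point, seen from an $O(\bar a)$-perturbed base point) shows the admissible directions $w/|w|$ form a cap of measure $O((\bar a/\eps_0)^{d-1})$ on $\mathbf S^{d-1}$, while the admissible lengths $|w| \le 2E$ contribute a factor $E^{d}$ for the near part and, after multiplication by the number $O(Et)$ of winding intervals, the stated $E^d\big((\bar a/\eps_0)^{d-1} + (Et)^d \bar a^{d-1}\big)$ bound. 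The two terms reflect the two regimes: a recollision happening before the relative trajectory has wound (small $u$, controlled by the cap of opening $\bar a/\eps_0$) versus one happening after winding (controlled by the $\eps$-tube seen at Euclidean scale $\bar a$, times the number of windings).

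\textbf{Second bullet.}
Here the target radius is $\eps_0$ rather than $\eps$, but we also restrict to $u \in [\delta, t]$. For $|w|$ large, the displacement $|uw| \ge \delta |w|$ is large, so the trajectory has left a neighbourhood of the origin; the only way to come back within $\eps_0$ is again to wind around the torus and re-approach a lattice point, and on each of the $O(Et)$ winding intervals the set of $w$ for which this happens is a slab of thickness $\eps_0$ in the length variable, i.e.\ measure $O(E^{d-1}\eps_0^{d-1})$ per interval — giving the $(Et)^d E^{d-1}\eps_0^{d-1}$ term. For $|w|$ small, say $|w| \lesssim \eps_0/\delta$, the trajectory simply has not moved far enough to be controlled, and one must include the whole ball $\{|w| \le C\eps_0/\delta\}$, of measure $O((\eps_0/\delta)^{d-1})\cdot O(\eps_0/\delta)$; keeping only the relevant power gives the $E(\eps_0/\delta)^{d-1}$ contribution after using $\eps_0 \ll \delta E$. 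Combining the two regimes yields the claimed bound on $|K_\delta|$, and then Proposition~\ref{geometric-prop} follows by applying this with the roles of $(x^0_1,x^0_2)$ played by the newly created pair and by each already-present particle, and summing the $O(k)$ resulting bad sets.

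\textbf{Main obstacle.}
The only delicate point is the bookkeeping of the winding of the relative trajectory on the torus: one must argue that on a time interval of length $t$ with $|w| \le 2E$ the line $u \mapsto y - uw$ returns near a lattice point only $O(Et)$ times, and that on each near-return the contribution is a genuine product of a ``transverse'' cap (or slab) times a bounded ``longitudinal'' interval, uniformly over the $O(\bar a)$ uncertainty in the base point. Once this decomposition is set up cleanly, the rest is elementary volume estimates in $\R^d$; since the full argument is carried out in \cite{GSRT}, I would state the lemma and refer there for the detailed partition, reproducing only the decomposition into the ``before winding'' and ``after winding'' regimes that explains the two terms in each bound.
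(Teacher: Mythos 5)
Your first bullet follows essentially the paper's route: the paper lifts to the universal cover once and for all, noting that an $\eps$-approach at some $u\in[0,t]$ forces $u(v_1-v_2)\in\bigcup_{k\in\Z^d}B_{3\bar a}(x_1^0-x_2^0+k)$ with $|u(v_1-v_2)|\le 2Et$, so that $v_1-v_2$ must lie in a union of cones with vertex $0$ — one of solid angle $(\bar a/\eps_0)^{d-1}$ toward the nearest image, and at most $(4Et)^d$ others of solid angle $c\,\bar a^{d-1}$ — and intersecting with $B_{2E}$ gives the stated bound. Your "$O(Et)$ winding intervals'' bookkeeping is harmless there (it would even give the sharper factor $Et$ in place of $(Et)^d$), but be aware that deferring "the detailed partition'' to \cite{GSRT} is not an option: \cite{GSRT} treats the whole space, where no winding occurs, and the lattice-translate count is precisely the new ingredient this lemma must supply for the torus.

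The genuine gap is in your construction of $K_\delta$. You take (small ball $|w|\lesssim\eps_0/\delta$) $\cup$ (winding contributions), claiming that for $|w|\gtrsim\eps_0/\delta$ an $\eps_0$-approach at some $u\ge\delta$ requires winding. That is false: there is (at most) one periodic image $x_1^0-x_2^0+k$ at distance $D\le 1/4$ from the origin, with $D\ge\eps_0$ possibly very small, and a fast relative velocity nearly aligned with it reaches its $3\eps_0$-neighbourhood directly, within one fundamental domain, at some $u\in[\delta,t]$. Such $w$ are bad but are excluded neither by your small ball nor by your winding term, and the angular bound is useless for them since the cone toward an image at distance $D\sim\eps_0$ has solid angle $(\eps_0/D)^{d-1}=O(1)$. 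The paper's treatment of exactly this case is what produces the $(\eps_0/\delta)^{d-1}$ term: from $uw\in B_{3\eps_0}(x_1^0-x_2^0+k)$ and $u\ge\delta$ one gets $|w\cdot n|\le 3\eps_0/\delta$ for every unit $n$ orthogonal to $x_1^0-x_2^0+k$, i.e. $w$ lies in a cylinder of radius $3\eps_0/\delta$ about the axis $\R(x_1^0-x_2^0+k)$, whose intersection with $B_{2E}$ has measure $CE(\eps_0/\delta)^{d-1}$; your ball $\{|w|\le C\eps_0/\delta\}$ is a strict subset of that cylinder and misses the aligned fast velocities. Relatedly, your per-winding "slab of thickness $\eps_0$ in the length variable, of measure $O(E^{d-1}\eps_0^{d-1})$'' is not the right object (a radial slab of thickness $\eps_0$ in $B_{2E}$ has measure of order $E^{d-1}\eps_0$): for an image at distance $\ge 1/4$ the bad set is an angular cone of solid angle $O(\eps_0^{d-1})$, hence of measure $O(E^d\eps_0^{d-1})$ per image, and multiplying by the $(4Et)^d$ admissible images gives the second term $E\cdot(Et)^dE^{d-1}\eps_0^{d-1}$ of the statement.
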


\noindent  The proof of this lemma is a simple adaptation of  Lemma 12.2.1 in \cite{GSRT}, and is given in Appendix B. Note that this is the only point of the convergence proof which differs in the case of the torus $\T^d$ from    the case of the whole space $\R^d$.
 In the case of the torus, there are indeed no longer  dispersion properties so waiting for a sufficiently long time, we expect trajectories to go back $\eps$-close to their initial positions.

\subsubsection{Induction procedure for the pseudo-trajectories} \label{traj-def}

Using the elementary step of Section~\ref{subsec: The elementary step}, we are going to construct in Proposition 
\ref{translation-lem} a coupling between the BBGKY and Boltzmann pseudo-trajectories, defined in Section \ref{subsec: pseudo-trajectories}, such that both trajectories remain close for all times up to a small error. In particular, this proof shows that recollisions may occur for the BBGKY pseudo-trajectories only for a set 
of configurations at time 0 in~${\mathcal D}^{J_K}_\eps$ with small measure.

\medskip

\noindent
As the stability of the good configurations (proved in Proposition \ref{geometric-prop}) requires a delay $\delta >0$ in between 2 collisions, we introduce  a modified set of collision times
\begin{align}
\label{eq: delay time}
{\mathcal T}_{J, \delta}(h) := &  \Big\{ T = (t_1,\dots,t_{J_K-1}) \,  /\,   t_i < t_{i-1} - \delta \, , \, 
(t_{J_k},\dots,t_{{J_{k-1} + 1} }) \in [t - k h, t- (k-1) h] \Big\} .
\end{align}
The following statement is analogous to Lemma~14.1.1 of~\cite{GSRT}.
 \begin{Prop}\label{translation-lem}
Fix $J=(j_1, \dots, j_K)$, $m = (m_1, \dots, m_{J_K-1}) \in \mathcal M_J$ and $T\in {\mathcal T}_{J ,\delta}(h)$. 
Let the pseudo-trajectories $Z_i = (X_i,V_i)$, $Z^0_i = (X^0_i,V_i)$ be defined inductively by choosing at each collision time $t_i$ a deflection angle~$\nu_{i+1}$ and a velocity~$v_{i+1}$ such that~
$$
(\nu_{i+1},v_{i+1}) \in  \big( {\mathbf S}^{d-1} \times  B_E \big) \setminus {\mathcal B}^{m_i}_{i}(Z_{i}^0(t_i))
\quad \text{and} \quad
\sum_{k=1}^{i+1} v_k^2 < E^2.
$$
The velocities of both pseudo-trajectories coincide as well as the positions $x_1 (u) = x_1^0(u)$ for~$u \in [0,t]$.
Furthermore, for~$\eps$ sufficiently small 
\begin{equation}\label{theresult}
\forall i  \leq J_K - 1,  \;
\forall \ell\leq i+1,\qquad | x_ \ell(t_{i+1}) - x_ \ell ^0(t_{i+1}) |   \leq \eps i \, .
%\quad \mbox{and}\quad v_ \ell(t_{i+1 }) = v_ \ell ^0(t_{i +1})  
\end{equation}
\end{Prop}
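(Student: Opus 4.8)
\textbf{Proof plan for Proposition~\ref{translation-lem}.} The proof is an induction on the number $i$ of collisions already performed, and the engine of the induction is the elementary step of Proposition~\ref{geometric-prop}. The plan is to establish, for each $i\leq J_K-1$, the following three facts simultaneously: (a) the velocities of the two pseudo-trajectories $Z_i$ and $Z_i^0$ coincide on the whole interval $[0,t_i]$; (b) the positions satisfy $|x_\ell(u)-x_\ell^0(u)|\leq \eps\, i$ for all $\ell\leq i$ and all $u\in[0,t_i]$, with equality of the $z_1$-coordinate; and (c) after the delay $\delta$ the configuration $Z_i^0$ lies in the good set $\cG_i(\eps_0)$ (and $Z_i$ in $\cG_i(\eps_0/2)$), so that on $[t_{i+1},t_i-\delta]$ — here we use $T\in{\mathcal T}_{J,\delta}(h)$, which guarantees $t_{i+1}<t_i-\delta$ — both flows ${\bf\Psi}_i$ and ${\bf\Psi}_i^0$ reduce to the free flow and therefore preserve the position discrepancy and the equality of velocities. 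The base case $i=1$ is immediate: there is a single particle $z_1=z_1^0$, the free flow is trivially free, and the discrepancy is $0\leq\eps$.

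\textbf{The induction step.} Suppose the statement holds at step $i$. At time $t_i$ a new particle labelled $i+1$ is adjoined next to particle $m_i$. We have $Z_i^0(t_i)\in\cG_i(\eps_0)$ by the inductive hypothesis (c), and $Z_i(t_i)$ satisfies $|X_i(t_i)-X_i^0(t_i)|\leq\eps\, i\ll\bar a$ by (b), since $\eps\, i\leq \eps A^{K+1}\ll\bar a$ by the scaling assumption~\eqref{sizeofparameters}. Hence the hypotheses of Proposition~\ref{geometric-prop} are met for the configuration $Z_i^0(t_i)$ with the pair $(\nu_{i+1},v_{i+1})$ chosen outside the bad set $\cB^{m_i}_i(Z_i^0(t_i))$ and with bounded energy $\sum_{k\leq i+1}v_k^2<E^2$. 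In the pre-collisional case the velocities of $m_i$ and $i+1$ are unchanged; in the post-collisional case they are updated by the same scattering rule $(v\mapsto v^*)$ in both hierarchies, so the velocities of $Z_{i+1}$ and $Z_{i+1}^0$ still agree at time $t_i$, and this agreement is propagated by the free flow on $[t_{i+1},t_i]$ once we are in the good set. The new position discrepancy is controlled as follows: particle $i+1$ is placed at $x_{m_i}(t_i)+\eps\nu_{i+1}$ in the BBGKY trajectory and at $x^0_{m_i}(t_i)$ in the Boltzmann one, so at time $t_i$ its discrepancy is at most $|x_{m_i}(t_i)-x^0_{m_i}(t_i)|+\eps\leq \eps\, i+\eps=\eps(i+1)$; the old particles keep their discrepancy $\leq\eps\, i\leq\eps(i+1)$; and since velocities coincide, the free transport on $[t_{i+1},t_i]$ preserves these bounds, giving~\eqref{theresult}. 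Finally, Proposition~\ref{geometric-prop} also furnishes fact (c) at step $i+1$: the conclusions~\eqref{taugeqdelta2precoll} say precisely that after the delay $\delta$ the $(i{+}1)$-particle configurations lie in $\cG_{i+1}(\eps_0/2)$ and $\cG_{i+1}(\eps_0)$ respectively, which closes the induction.

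\textbf{Expected main obstacle.} The genuinely delicate point is not the bookkeeping above but checking that the elementary step of Proposition~\ref{geometric-prop} is legitimately applicable at each of the $J_K-1$ adjunctions, i.e.\ that the hierarchy of scales~\eqref{sizeofparameters}, namely $A^{K+1}\eps\ll\bar a\ll\eps_0\ll\min(\delta E,1)$, is consistent with the largest number of collisions $J_K-1\leq\mathcal N_K=(A^{K+1}-1)/(A-1)$ that can occur: the accumulated position error $\eps\,(J_K-1)$ must stay $\ll\bar a$, which is exactly why the factor $A^{K+1}$ appears on the left of~\eqref{sizeofparameters}. One must also verify that the ``good set'' property is not destroyed by the fact that $Z_i$ (as opposed to $Z_i^0$) has slightly shifted positions — this is handled in Proposition~\ref{geometric-prop} itself through the $\bar a$-neighbourhood formulation of~\eqref{eq: example}, so here it suffices to invoke it. The remaining subtlety is the left-continuity / time-orientation issue: the pseudo-trajectories run backwards in time and the post-collisional updates are defined using $t_k^+$; one checks that the delay condition $t_{i+1}<t_i-\delta$ leaves a clean interval on which everything is the free flow, so the matching of $Z$ and $Z^0$ is unambiguous. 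With Proposition~\ref{geometric-prop} and Lemma~\ref{geometric-lem1} in hand, the induction is then routine.
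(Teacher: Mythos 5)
Your proof follows essentially the same route as the paper's: an induction on the number of particle adjunctions whose engine is Proposition~\ref{geometric-prop}, with the recursion hypothesis combining velocity agreement, an $O(\eps\, i)$ position discrepancy, and membership of $Z_i^0(t_i)$ in $\cG_i(\eps_0)$ (the delay $t_i-t_{i+1}>\delta$ from ${\mathcal T}_{J,\delta}(h)$ being used exactly as in the paper to recover the good set at the next branching time). The only slip is a harmless off-by-one in your hypothesis (b): the paper carries the discrepancy $\eps(i-1)$ at time $t_i$, which after adding $\eps$ at the adjunction yields exactly the bound $\eps\, i$ at $t_{i+1}$ claimed in \eqref{theresult}, whereas your looser $\eps\, i$ at step $i$ outputs $\eps(i+1)$; starting from the tight base case (zero discrepancy for a single particle) and adding $\eps$ per adjunction, your argument gives precisely the stated estimate.
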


\noindent As a consequence of this proposition, we define  a bad set of velocities and deflection angles for 
the pathological pseudo-trajectories
\begin{align}
\label{eq: bad set}
\cB (z_1,J, T, m)  & := \Big \{ ( \nu_i, v_i)_{2 \leq i \leq J_K} \in \big( {\mathbf S}^{d-1} \times  B_E \big)^{J_K-1}
\; \Big| \; \sum_{k=1}^{J_K} v_k^2 < E^2 \quad \text{and} \quad \exists i_0 \leq J_K-1 \nonumber \\
&
\qquad \qquad \text{such that} \quad  \forall i < i_0, \quad
   ( \nu_{i+1}, v_{i+1}) \in \big( {\mathcal B}^{m_i}_{i}(Z_{i}^0(t_i)) \big)^c \\
& \qquad \qquad \text{and} \quad   ( \nu_{i_0+1}, v_{i_0+1}) \in {\mathcal B}^{m_{i_0}}_{i_0}(Z_{i_0}^0(t_{i_0}))
\Big\}. \nonumber
\end{align}
%This set is defined in terms of the configuration of $k$ particles at time $t_k$ (which will be a good configuration by construction), and therefore depends on the sequence of collision times $(t_1, t_2,\dots t_k)$, as well as on the initial state $z_1$ and of the velocities, deflection angles and parameters $(m_i, v_{i+1},\nu_{i+1})_{i<k}$ describing the previous collisions.

\begin{proof}
We proceed by induction on~$i$, the index of the time variables~$t_i$ for~$1\leq i \leq J_K -1$. 
The recursion hypothesis at step $i$ is
\begin{equation}
\label{recurrence hyp}
Z^0_i (t_{i}) \in \cG_i (\eps_0) 
\quad \mbox{and} \quad 
\forall \ell\leq i \, ,\qquad | x_ \ell(t_{i}) - x_ \ell ^0(t_{i}) |   \leq \eps (i-1) \, , 
\quad v_ \ell(t_{i}) = v_ \ell ^0(t_{i})  \, .
\end{equation}

\medskip

\noindent
We first notice that by construction, $ z_1( t_1) = z_1^0(t_1)$, 
so~(\ref{recurrence hyp}) holds for~$i=1$. The initial configuration containing only one particle, there is no possible  recollision!

\medskip

\noindent
Assume that (\ref{recurrence hyp}) holds up to some ~$i \leq J_K-1$ and let us prove that~(\ref{recurrence hyp}) holds for~$i+1$.
% and that $Z^0_{i+1} (t_{i+1}) \in \cG_{i+1} (\eps_0)$.
We shall consider two cases depending on whether the particle adjoined at time~$t_{i}$ is pre-collisional or post-collisional. 

\medskip
\noindent
$\bullet$
Let us start with the case of pre-collisional velocities $(v_{i+1}, v_{m_i}(t_i))$ at time $t_i$. 
We recall that the particle is adjoined in such a way that~$(\nu_{i+1}, v_{i+1})$  belongs 
to~$ \big( {\mathbf S}^{d-1} \times  B_E \big) \setminus {\mathcal B}_{i} ^{m_i} (  Z_{i}^0(t_i))$.
The new configuration $Z^0_{i+1}$ satisfies for all~$u\in ]t_{i+1},t_i]$
$$
\begin{aligned}
\forall \ell\leq i \,,\quad &x^0_ \ell(u ) =x^0_ \ell (t_i) +(u-t_i) v_ \ell(t_i)\,,\qquad &v_ \ell(u) =v_ \ell(t_i)\,,\\
&x^0_{i+1}(u) = x^0_{m_i} (t_i)  + (u -t_i) v_{i+1}\,, &v_{i+1}(u) =v_{i+1}\,.
\end{aligned}
$$
Since $t_i - t_{i+1} > \delta$, Proposition~\ref{geometric-prop} implies that $Z_{i+1}^0(t_{i+1})$ will be in~$\cG_{i+1}(\eps_0)$.

\medskip

\noindent
Now let us study $Z_{i+1}$ the BBGKY pseudo-trajectory.
 Provided that $\eps$ is sufficiently small, by the induction assumption~(\ref{recurrence hyp}) and the fact that 
$A^{K+1} \eps \leq \bar a$ (see  \eqref{sizeofparameters}), we have
$$
\forall \ell\leq i,\quad  | x_{\ell}  (t_i) -x_{\ell}^0 (t_i) |\leq \eps (i-1) \leq  \bar a \,.
$$

\noindent
Since~$Z_{i}^0(t_i)$ belongs to~$\cG_{i}(\eps_0)$, Proposition~\ref{geometric-prop}
 implies  that backwards in time, there is free flow for~$Z _{i+1}$. In particular,
\begin{equation}\label{elli+1}
\begin{aligned}
\forall \ell < i+1\,,\quad &x_ \ell(u) =x_ \ell (t_i) +(u -t_i) v_ \ell(t_i)\,,\qquad &v_ \ell(u) =v_ \ell(t_i)\,,\\
&x_{i+1}(u) = x_{m_i} (t_i) + \eps \nu_{i+1}  + (u -t_i) v_{i+1}\,,\qquad &v_{i+1}(u) =v_{i+1}\,.
\end{aligned}
\end{equation}
Therefore, the velocities of both configurations coincide and  by the induction assumption~(\ref{recurrence hyp}) 
$$\forall \ell\leq i+1\,,\quad  \forall u  \in ]t_{i+1},t_i] \, , \quad  
 |x_ \ell(u)- x_ \ell ^0(u)| \leq \eps (i-1) +\eps \leq \eps i 
$$
where we used that in \eqref{elli+1} there is a shift by at most $\eps$.
 
 \medskip
\noindent
$\bullet$
The case of post-collisional velocities  $(v_{i+1}, v_{m_i}(t_i))$ at time $t_i$ is identical up to a scattering of the 
velocities $ v_{i+1}, v_{m_i}$ in $v_{i+1}^*, v^*_{m_i}$.
Note that the constraint $\sum_{k=1}^{i+1} |v_k^2| < \frac{E^2}{2}$ implies that both velocities $v_{i+1}^*, v^*_{m_i}$ remain in $B_E$.
%We indeed have for $u  \in [t_{i+1},t_i[$
%$$ 
%\begin{aligned}
%\forall \ell < i+1, \quad \ell\neq m_i \,,\quad 
%&x^0_ \ell(u ) =x^0_ \ell (t_i) +(u -t_i) v^{*}_ \ell(t_i)\,,\quad &v_ \ell(u) =v_ \ell(t_i)\,,\\
%&x^0_{m_i}(u) =x^0_{m_i}  (t_i) +(u -t_i) v^*_{m_i}(t_i)\,,\quad &v_{m_i}(u) =v^{*}_{m_i}(t_i)\,,\\
%&x^0_{i+1}(u) = x^0_{m_i} (t_i)  + (u -t_i) v_{i+1}^*\,,\quad &v_{i+1}(u ) =v_{i+1}^*\,.
%\end{aligned}
%$$
%Now let us study the BBGKY pseudo-trajectory.
%We recall that the particle is adjoined in such a way that~$(\nu_{i+1}, v_{i+1})$  belongs 
%to~$\big( {\mathbf S}^{d-1} \times  B_E \big) \setminus {\mathcal B}_{i} ^{m_i} (  Z_{i}^0(t_i))$. Provided that $\eps$ is sufficiently small, by the induction assumption~(\ref{theresult}), we have
%$$\forall \ell\leq i,\quad  | x_{\ell} (t_i) -x_ \ell ^0 (t_i) |\leq \eps (i-1) \,.$$
%Then as in~(\ref{elli+1}), we can apply Proposition~\ref{geometric-prop} which implies that
%%$$
%%\forall \ell\leq i+1\,,\quad \forall s  \in [t_{i+1},t_i[ \, , \quad v_ \ell(s  ) -v_ \ell ^0(s ) = v_ \ell(t_i ^-) -v_ \ell ^0(t_i^-) = 0\, ,
%%$$
%$$\forall \ell\leq i+1\,,\quad  \forall s  \in [t_{i+1},t_i[\, , \quad  
% |x_ \ell(s )- x_ \ell ^0(s )| \leq \eps (i-1) +\eps \leq i\eps \,,
%$$
 This concludes the proof of Proposition~\ref{translation-lem}.
    \end{proof}

\subsection{Estimate of the error term}
\label{subsec: Estimates}

We turn now to the main goal of this section and use the coupling of Proposition \ref{translation-lem} between the hierachies to show that for $K \ll \log \log N$ and~$\alpha t \ll (\log \log N)^{(A-1)/A}$ then 
\begin{equation}
\label{eq: error estimate (1,K)}
\| f_N^{(1,K)} - g_\alpha^{(1,K)} \|_{L^\infty ([0,t] \times {\mathbf T}^d\times \R^d) }\to 0 %\, , \quad \mbox{as} \, \, N \to \infty \, .
\end{equation}
with an explicit rate of convergence when $N$ diverges.
The coupling of Proposition~\ref{translation-lem} can be implemented only for a reduced set of velocities taking values in $B_E$ and for collision times separated at least by $\delta$. Thus the first step will be to estimate the cost of cutting-off the large velocities and the collision time separation in 
\eqref{BBGKYfunctionalapprox 0} and \eqref{botzfunctionalapprox 0}.
Then in Section \ref{subsec: Estimate of the main term}, the parameters $\delta, E$ and $K$ will be tuned and the error term evaluated.

\subsubsection{Energy truncation}\label{energytruncation}

Given $E>0$, define the large velocity cut-off 
for $f^{(1,K)}_N$ introduced in  \eqref{eq: serie f1developpee}  as
\begin{align*}
f^{(1,K)}_{N,E} := \sum_J   \eps ^{(d-1)(J_K-1 )}    \frac{(N-1)!}{ (N-J_K )!} \; 
\sum_{m \in \mathcal{M}_J}  F^{(1,K)}_{N,E} (J,m)
\end{align*}
where $\sum_J$ stands for $ \sum_{j_1= 0}^{n_1-1} \dots     \sum_{j_K= 0}^{n_K-1} $ and
the velocities in the integral \eqref{BBGKYfunctionalapprox 0} are truncated
\begin{align}
& F^{(1,K)}_{N,E} (J,m) \, (t,z_1) \\
& \qquad := 
     \int_{{\mathcal T}_J(h) }  dT
    \int_{({\mathbf S}^{d-1} \times B_E)^{J_K-1}} \!  d \bar \nu \, d \bar V 
    \; \cA(T, z_1, \bar \nu, \bar V)   & \indc_{\{ H_{J_K} ( Z_{J_K}(0) ) \leq \frac{E^2}{2} \}} 
    f_N^{0(J_K)} \big( Z_{J_K} (0) \big) \nonumber
\end{align}
where $\cA$ was defined in \eqref{eq: product velocities} and 
$H_k ( Z_k ) = \frac{1}{2} \sum_{i = 1}^k |v_i|^2$.

%\begin{equation*}
%%\label{eq: product velocities}
%\cA(\bar \nu, \bar V) := \prod_{i=1}^{J_K - 1} ((v_{i+1} - v_{m_i} (t_i))\cdot \nu_{i+1} )
%\quad \text{and} \quad
%\begin{cases}
%\bar \nu = \{ \nu_2, \dots, \nu_{J_K} \}\\
%\bar V = \{ V_2, \dots, V_{J_K} \}
%\end{cases}
%\end{equation*}
In the same way, for $g _\alpha^{(1,K)}$ in~\eqref{eq: serie f1developpeeboltz}, the large velocity cut-off is
defined as 
\begin{align*}
g^{(1,K)}_{\alpha,E} := \sum_J \alpha^{J_K-1} \; \sum_{m \in \mathcal{M}_J}  G^{(1,K)}_{E} (J,m)
\end{align*}
where the velocities in the integral \eqref{botzfunctionalapprox 0} are truncated 
\begin{align}
& G^{(1,K)}_{E} (J,m) \, (t,z_1) \\
& \qquad :=  \int_{{\mathcal T}_J(h) }  dT
    \int_{({\mathbf S}^{d-1} \times B_E)^{J_K-1}} \!  d \bar \nu \, d \bar V 
    \; \hat \cA(T, z_1, \bar \nu, \bar V)  & \indc_{\{ H_{J_K} ( Z^0_{J_K}(0) ) \leq \frac{E^2}{2} \}} 
g^{0(J_K)}    (Z^0_{J_K} (0)) \, . \nonumber
\end{align}
%
% \begin{equation}
%\label{gs1-fs-R}
%\begin{aligned}
%f_{N,E}^{(1,K)}  : = \sum_{j_1=1}^{n_1-1} \dots \sum_{j_K=0}^{n_K-1}Q_{1,J_1} (h )Q_{J_1,J_2} (h )
% \dots  Q_{J_{K-1},J_K} (h )   \indc_{H_{J_K}(Z_{J_K})\leq E^2}  \, f^{0(J_K)}_N\, , \\
%g^{(1,K)}_{N,E}   := \sum_{j_1=1}^{n_1-1} \dots \sum_{j_K=0}^{n_K-1}Q^0_{1,J_1} (h )Q^0_{J_1,J_2} (h )
% \dots  Q^0_{J_{K-1},J_K} (h )   \indc_{H_{J_K}(Z_{J_K})\leq E^2}  \,  g_N^{0(J_K)} \, .
% \end{aligned}
%\end{equation}
Then, we have the following error estimate.
        \begin{Prop}
\label{truncatedenergy}
There is  a constant~$C$ depending only on $\beta$ and $d$ such that, as~$N$ goes to infinity in the scaling~$N \eps^{d-1} \alpha^{-1} \equiv 1$, the following bounds hold:
$$
\begin{aligned}
\| f_{N}^{(1,K)}- f_{N,E}^{(1,K)} \|_{L^\infty ([0,t]\times {\mathbf T}^{d}\times \R^{d})  }+
\| g_\alpha^{(1,K)} - g^{(1,K)}_{\alpha,E}\|_{L^\infty ([0,t]\times{\mathbf T}^{d}\times \R^{d})  } \\
\leq   A^{K(K+1)} (C \alpha  t)^{A^{K+1} } e^{-\frac \beta 4  E^2}  \|\rho^0\|_{L^\infty}\, ,
\end{aligned}
$$
 with $A, K$ as in Proposition {\rm\ref{prop: remainders}}.
 \end{Prop}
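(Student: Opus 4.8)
The plan is to estimate the difference $f_N^{(1,K)}-f_{N,E}^{(1,K)}$ (and analogously for the Boltzmann hierarchy) by using the iterated Duhamel structure of each term together with the weighted $L^\infty$ bounds of Lemma~\ref{continuity}, the only new ingredient being that we must gain the Gaussian factor $e^{-\beta E^2/4}$ from the restriction $\{H_{J_K}(Z_{J_K}(0))> E^2/2\}$. First I would write
$$
f_N^{(1,K)}-f_{N,E}^{(1,K)}=\sum_{j_1=0}^{n_1-1}\!\cdots\!\sum_{j_K=0}^{n_K-1}\alpha^{J_K-1}\,Q_{1,J_1}(h)\cdots Q_{J_{K-1},J_K}(h)\Big(\indc_{\{H_{J_K}>E^2/2\}}\,f_N^{0(J_K)}\Big),
$$
which is legitimate because the pseudo-trajectory velocities are constant between branchings, so the indicator $\indc_{\{H_{J_K}(Z_{J_K}(0))\le E^2/2\}}$ inside $F_{N,E}^{(1,K)}(J,m)$ is exactly the indicator that all velocities $v_1,\dots,v_{J_K}$ (the only data appearing at time $0$) satisfy $\sum|v_i|^2\le E^2$; hence $f_N^{(1,K)}-f_{N,E}^{(1,K)}$ is the same series expansion applied to the truncated initial datum $\indc_{\{H_{J_K}>E^2/2\}}f_N^{0(J_K)}$.

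Next I would control this truncated datum in the weighted norm $\|\cdot\|_{\e,J_K,\beta/2}$. Splitting the exponent, $\indc_{\{H_{J_K}>E^2/2\}}e^{(\beta/2)H_{J_K}}\le e^{-\beta E^2/8}e^{\beta H_{J_K}}$, so by the a priori bound \eqref{estimatefNktweight},
$$
\big\|\indc_{\{H_{J_K}>E^2/2\}}f_N^{0(J_K)}\big\|_{\e,J_K,\beta/2}\le e^{-\beta E^2/8}\big\|f_N^{0(J_K)}\big\|_{\e,J_K,\beta}\le e^{-\beta E^2/8}\,C^{J_K}\Big(\tfrac{\beta}{2\pi}\Big)^{J_Kd/2}\|\rho^0\|_{L^\infty}.
$$
Then I would apply the continuity estimate \eqref{estimatelemmacontinuity} of Lemma~\ref{continuity} $K$ times, once per factor $Q_{J_{k-1},J_k}(h)$ (each operating on $[0,h]$ with a loss of $\lambda$ halved at each stage, but since $h$ is tiny we can afford to re-normalize the weight), exactly as in the proof of Proposition~\ref{prop: remainders}: each collision contributes a factor $C_d h\beta^{-(d+1)/2}$, each factor $\alpha$ is absorbed into $\alpha h$, and $(k-1)h\le t$. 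This yields, for each multi-index $J$ and each $m\in\mathcal M_J$, a bound of the form $(C_d\alpha h)^{J_K-1}(\alpha t)^{\cdots}\cdots$; since $\alpha h\le C\alpha t\cdot(\text{something})$ and $J_K-1\le\mathcal N_K\le A^{K+1}/(A-1)$, summing over $m\in\mathcal M_J$ (which has at most $(J_K-1)!\le A^{A^{K+1}}$ elements, absorbed into the constant) and over the $\prod_{k\le K}n_k=A^{K(K+1)/2}$-many multi-indices $J$ gives the claimed factor $A^{K(K+1)}(C\alpha t)^{A^{K+1}}$, while the Gaussian factor $e^{-\beta E^2/8}$ survives untouched. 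Replacing $\beta/8$ by $\beta/4$ costs nothing if one is slightly more careful (e.g. truncating at $\{H_{J_K}>E^2\}$ instead, or absorbing constants); I would simply carry the sharper split $\indc_{\{H_{J_K}>E^2/2\}}e^{(\beta/2)H_{J_K}}\le e^{-\beta E^2/4}e^{(3\beta/4)H_{J_K}}$ and run Lemma~\ref{continuity} with $\lambda=3\beta/4$ in place of $\beta$, adjusting the constant $C$. The Boltzmann-hierarchy term is handled identically, using \eqref{Q0-est} and \eqref{estimategNktweight} instead.

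The main obstacle is purely bookkeeping: one must make sure the combinatorial factors from the multiple sums over $J$ and over the branching labels $m$, together with the factorials produced by Stirling in Lemma~\ref{continuity}, all combine into the single clean expression $A^{K(K+1)}(C\alpha t)^{A^{K+1}}$ and do not secretly hide a worse-than-double-exponential growth in $K$. The key quantitative input that makes this work is $J_K\le\mathcal N_K=(A^{K+1}-1)/(A-1)$, so that everything — number of terms, size of each collision kernel, number of label choices — is at most exponential in $A^{K+1}$, which is exactly the exponent appearing in the statement; the time integration factor $t^n/n!$ from Lemma~\ref{continuity} is what keeps the base $(C\alpha t)$ rather than something larger. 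No step here uses anything beyond Lemma~\ref{continuity}, Proposition~\ref{apriori-est}, and the already-established bounds \eqref{estimatefNktweight}–\eqref{estimategNktweight}, so the proof is a routine (if notation-heavy) iteration.
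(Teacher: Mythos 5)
Your strategy is the same as the paper's: since the total kinetic energy is conserved along the BBGKY pseudo-trajectories (it is the conservation of $H_{J_K}$ that matters here, not constancy of the velocities between branchings --- recollisions in the hard-sphere flow do change individual velocities), the difference $f_N^{(1,K)}-f_{N,E}^{(1,K)}$ is the truncated series applied to $\indc_{\{H_{J_K}\ge E^2/2\}}f_N^{0(J_K)}$, and one concludes with the weighted-norm machinery of Lemma~\ref{continuity}. Two of your bookkeeping steps, however, would not produce the bound as stated. First, the Gaussian gain: on $\{H_{J_K}\ge E^2/2\}$ one has $e^{\beta H_{J_K}/2}=e^{\beta H_{J_K}}e^{-\beta H_{J_K}/2}\le e^{-\beta E^2/4}e^{\beta H_{J_K}}$, so the split you wrote first (truncated datum measured in the $\beta/2$-weight, $f_N^{0(J_K)}$ in the $\beta$-weight, which is exactly what \eqref{estimatefNktweight} provides) already yields the stated $e^{-\beta E^2/4}$; your $e^{-\beta E^2/8}$ is a slip, and the proposed remedy is backwards, since with the intermediate weight $3\beta/4$ the gain on that set is only $e^{-\beta E^2/8}$, not $e^{-\beta E^2/4}$.

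Second, the iteration and the sum over $m$: applying Lemma~\ref{continuity} $K$ times with the weight halved at each stage leaves you with weight $\beta 2^{-K}$ while $K\to\infty$, and multiplying separately by $|\mathcal{M}_J|\le (J_K-1)!\le (A^{K+1})!$ introduces a factor superexponential in $A^{K+1}$ that cannot be absorbed into a constant depending only on $\beta$ and $d$ (you would need $C\gtrsim A^{K+1}$); moreover it double counts, because the bound you invoke for a single $(J,m)$ comes from an operator that already contains the sum over attachments. The paper's route avoids both problems: since the nested time simplices are contained in the full simplex on $[0,t]$ with $J_K-1$ collisions, the whole composition $Q_{1,J_1}(h)\cdots Q_{J_{K-1},J_K}(h)$, together with the sum over the labels $m$, is dominated by the single operator $|Q|_{1,J_K}(t)$ applied to the truncated datum; one application of Lemma~\ref{continuity} with $\lambda=\beta/2$ (the sum over attachments being precisely what the Stirling step $(s+n)^n/n!\le e^{s+n}$ accounts for) gives $\bigl(Ct(\beta/2)^{-(d+1)/2}\bigr)^{J_K-1}$ times $e^{-\beta E^2/4}C^{J_K}\|\rho^0\|_{L^\infty}$, and after the prefactor bound $(\eps^{d-1})^{J_K-1}(N-1)!/(N-J_K)!\le\alpha^{J_K-1}$ and $J_K\le\mathcal{N}_K\le A^{K+1}$, only the sum over $J$ remains, contributing $\prod_{k\le K}n_k\le A^{K(K+1)}$. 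With these repairs your argument coincides with the paper's proof.
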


 \begin{proof}
We first consider the BBGKY hierarchy.
Since the kinetic energy is preserved by the transport ${\bf S}_k$, the difference $(f_{N}^{(1,K)}- f_{N,E}^{(1,K)} )$ can be bounded from above by estimating the contribution of the pseudo-trajectories such that $\{ H_{J_K}(Z_{J_K}(0))\geq \frac{E^2}{2} \}$ at time 0.
Note that from \eqref{estimatefNktweight}
\begin{equation}\label{estimateexp}
\|  \indc_{\{ H_{J_K}(Z_{J_K})\geq \frac{E^2}{2}  \}}  \, f^{0 (J_K)}_N \|_{\e, J_K, \beta/2} 
\leq \|   f^{0 (J_K)}_N   \|_{\e, J_K, \beta} \,  e^{-\frac \beta 4  E^2}
\leq C^{J_K} \, e^{-\frac \beta 4  E^2}   \|\rho^0\|_{L^\infty}\, .
\end{equation}
By Lemma \ref{continuity},   we get
$$
\begin{aligned}
&\|F^{(1,K)}_N (J,m) - F^{(1,K)}_{N,E} (J,m) \|_{L^\infty ([0,t]\times {\mathbf T}^{d}\times \R^{d})  }
% & \Big\| Q_{1,J_1} (h )Q_{J_1,J_2} (h )
% \dots  Q_{J_{K-1},J_K} (h )   \indc_{H_{J_K}(Z_{J_K})\geq E^2}  \, f^{(J_K)}_N(0)\Big\|_{L^\infty ({\mathbf T}^{d}\times \R^{d})  } 
\\
& \qquad \qquad \qquad  \leq  \Big\| |Q|_{1,J_K} (t)   
\indc_{\{ H_{J_K}(Z_{J_K}) \geq \frac{E^2}{2}  \} }  
 \, f^{0 (J_K)}_N\Big\|_{L^\infty ([0,t]\times {\mathbf T}^{d}\times \R^{d})  } \\
& \qquad \qquad \qquad  
\leq \left( {C  t \over (\beta/2) ^{(d+1)/2} } \right) ^{J_K-1 }   	
 \| \indc_{\{ H_{J_K}(Z_{J_K})\geq \frac{E^2}{2}  \} }    f^{0 (J_K)}_N\|_{\e, J_K, \beta/2} \, .
  \end{aligned}
 $$
It follows that 
  $$
 \begin{aligned}
\|F^{(1,K)}_N (J,m) - F^{(1,K)}_{N,E} (J,m) \|_{L^\infty ([0,t]\times {\mathbf T}^{d}\times \R^{d})  }
% & \Big\| Q_{1,J_1} (h )Q_{J_1,J_2} (h )
% \dots  Q_{J_{K-1},J_K} (h )   \indc_{H_{J_K}(Z_{J_K})\geq E^2}  \, f^{(J_K)}_N(0)\Big\|_{L^\infty ({\mathbf T}^{d}\times \R^{d})  }\\
 \leq   (  C     t)^{A^{K+1} } e^{-\frac \beta 4  E^2}  \|\rho^0\|_{L^\infty}
 \end{aligned}
 $$
 thanks to~(\ref{estimateexp}) and to the fact that~$J_K \leq {\mathcal N}_K \leq A^{K+1}$.
A similar estimate holds for the Boltzmann hierarchy. Summing over all possible choices of $j_k$  proves the proposition, recalling that in the Boltzmann-Grad scaling 
$$
( \eps ^{d-1} )^{J_K-1} \frac{(N-1)!}{ (N-J_K )!} \leq \alpha^{J_K -1} \, .
$$
Proposition~\ref{truncatedenergy} is proved.
 \end{proof}

\subsubsection{Time separation}
\label{timetruncation}

We choose  a small parameter~$\delta>0$ such that~$A^K \delta \ll h$  and estimate the error for separating the collision times by at least $\delta$.
The time cut-off of the pseudo-trajectories is defined as
\begin{align}
f^{(1,K)}_{N,E, \delta}
:= \sum_J  \eps ^{(d-1)(J_K-1) }  \frac{(N-1)!}{ (N-J_K )!} \;
\sum_{m \in \mathcal{M}_J}  F^{(1,K)}_{N,E, \delta} (J,m)
\end{align}
where the time integrals are restricted to the set 
${\mathcal T}_{J, \delta}(h)$ defined in \eqref{eq: delay time}
\begin{align*}
& F^{(1,K)}_{N,E, \delta} (J,m) \, (t,z_1) \\
& \qquad := 
     \int_{{\mathcal T}_{J, \delta} (h) }  dT
    \int_{({\mathbf S}^{d-1} \times B_E)^{J_K-1}} \!  d \bar \nu \, d \bar V 
    \; \cA(T, z_1, \bar \nu, \bar V) \;    1_{\{ H_{J_K} ( Z_{J_K}(0) ) \leq \frac{E^2}{2} \}} f_N^{0(J_K)}    (Z_{J_K} (0))  \nonumber
\end{align*}
with $\cA(t, z_1,\bar \nu, \bar V)$ as in \eqref{eq: product velocities}.
In the same way, for the Boltzmann hierarchy, we set
\begin{align*}
g^{(1,K)}_{\alpha, E, \delta} := \sum_J  \alpha^{J_K-1}\, 
\sum_{m \in \mathcal{M}_J} G^{(1,K)}_{E, \delta} (J,m)
\end{align*}
where the separation time cut-off is defined as
\begin{align*}
& G^{(1,K)}_{E, \delta} (J,m) \, (t,z_1) \\
& \qquad :=  \int_{{\mathcal T}_{J, \delta} (h) }  dT
    \int_{({\mathbf S}^{d-1} \times B_E)^{J_K-1}} \!  d \bar \nu \, d \bar V 
    \; \hat \cA(T, z_1, \bar \nu, \bar V) \;  1_{\{ H_{J_K} ( Z^0_{J_K}(0) ) \leq \frac{E^2}{2} \}}
     g^{0(J_K)}    (Z^0_{J_K} (0)) .
\end{align*}
Then the following holds.
 \begin{Prop}\label{delta2smallHS}
There is  a constant~$C$ depending only on $\beta$ and $d$ such that, as~$N$ goes to infinity in the scaling~$N \eps^{d-1} \alpha^{-1} \equiv 1$, the following holds
\begin{align}
& \| f_{N,E}^{(1,K)}- f_{N,E,\delta}^{(1,K)} \|_{L^\infty ([0,t] \times{\mathbf T}^{d}\times \R^{d})  } +\| g^{(1,K)}_{\alpha,E} - g^{(1,K)}_{\alpha,E,\delta}\|_{L^\infty ([0,t] \times{\mathbf T}^{d}\times \R^{d})  } 
\nonumber \\  
& \qquad \qquad \qquad 
\leq    A^{(K+2)(K+1)} (C \alpha t)^{A^{K+1} }  \,  { \delta \over t}   \|\rho^0\|_{L^\infty}\, ,
\label{eq: time cut-off}
\end{align}
with $A,K$ as in Proposition~{\rm\ref{prop: remainders}}.
\end{Prop}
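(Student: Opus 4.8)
The plan is to estimate the difference between the full truncated series and the one where collision times are separated by at least $\delta$, by bounding directly the contribution of the ``bad'' set of collision times in~${\mathcal T}_J(h) \setminus {\mathcal T}_{J,\delta}(h)$, i.e.\ configurations for which at least two consecutive collision times~$t_{i-1}, t_i$ (within the same sub-interval $[t-kh, t-(k-1)h]$) are closer than~$\delta$. The starting observation is that since $f^{(1,K)}_{N,E}$ and $f^{(1,K)}_{N,E,\delta}$ differ only in the domain of time integration, their difference is controlled by the same collision-operator bounds as in Lemma~\ref{continuity}, but with the time integral restricted to this small set.

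First I would fix $J$, $m$, and a position $i_0 \in \{1,\dots,J_K-1\}$ at which the time gap is small, and bound the corresponding contribution. Because the collision operators in the Duhamel expansion are controlled in the weighted norms $\|\cdot\|_{\e, k, \lambda}$, the argument of Lemma~\ref{continuity} applies verbatim except that one of the $n$ nested time integrations $\int_0^{t_{i_0 - 1}} dt_{i_0}$ is replaced by $\int_{t_{i_0-1} - \delta}^{t_{i_0 - 1}} dt_{i_0}$, which produces a factor $\delta$ instead of a factor $t_{i_0 - 1} \leq t$. Hence the total gain is a factor $\delta/t$ compared to the bound $(C t / \beta^{(d+1)/2})^{J_K - 1}$ obtained in Lemma~\ref{continuity}. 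Using~(\ref{estimatefNktweight}) for the initial datum and $J_K \leq \mathcal{N}_K \leq A^{K+1}$ exactly as in the proof of Proposition~\ref{truncatedenergy}, this yields, for a single $(J, m, i_0)$,
$$
\| F^{(1,K)}_{N,E}(J,m) - F^{(1,K)}_{N,E,\delta}(J,m)\|_{L^\infty} \leq (C \alpha t)^{A^{K+1}} \, \frac{\delta}{t} \, \|\rho^0\|_{L^\infty}
$$
after absorbing the $\alpha$-powers via the Boltzmann--Grad scaling $(\eps^{d-1})^{J_K - 1} (N-1)!/(N-J_K)! \leq \alpha^{J_K - 1}$, just as before.

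Then I would sum over the at most $J_K - 1 \leq A^{K+1}$ choices of $i_0$, over the $|\mathcal{M}_J| \leq (J_K-1)! \leq A^{(K+1)A^{K+1}}$ — actually more crudely over the $\prod_k n_k \leq A^{K(K+1)}$ choices coming from $\sum_J$ together with the $|\mathcal{M}_J|$ factor — collecting all combinatorial prefactors into a power $A^{(K+2)(K+1)}$ of $A$ (here one uses $\mathcal{N}_j = (A^{j+1}-1)/(A-1)$ and $\sum_{k\le K} k(k+1) \le K(K+1)(K+2)/3$, bounded by $(K+2)(K+1)$ up to adjusting constants, exactly in the spirit of the computations in the proof of Proposition~\ref{prop: remainders}). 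The extra factor $i_0 \leq A^{K+1}$ is harmless as it is absorbed in the same way. The Boltzmann-hierarchy term is handled identically using~(\ref{estimategNktweight}) and the free flow bound~(\ref{est-col:1bisHS}), since the time cut-off acts on the same time simplex. Assembling everything gives~(\ref{eq: time cut-off}).

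The only mildly delicate point is bookkeeping: one must make sure that restricting to ${\mathcal T}_J(h)\setminus {\mathcal T}_{J,\delta}(h)$ really costs only a single factor $\delta/t$ rather than an uncontrolled number of them — this is why we fix the \emph{first} index $i_0$ where the gap is small and bound by a union over $i_0$, rather than trying to iterate the argument. Once this is set up, the estimate is a routine repetition of the Cauchy--Kovalevskaya-type bound of Lemma~\ref{continuity} with one time integral shrunk, so I do not expect any genuine obstacle beyond careful tracking of the $A$-dependent constants, which mirror those already appearing in Proposition~\ref{prop: remainders} and Proposition~\ref{truncatedenergy}.
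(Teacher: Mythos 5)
Your overall route is the same as the paper's: decompose $\mathcal{T}_J(h)\setminus\mathcal{T}_{J,\delta}(h)$ according to the position $i_0$ of the small gap, note that one nested time integration shrinks from length of order $t$ to length $\delta$, reuse the continuity machinery of Lemma~\ref{continuity}, and sum over $i_0$ and $J$, the Boltzmann term being treated identically. Two points of bookkeeping do not work as written, one minor and one genuine. The minor one: restricting one time integration turns the simplex volume $t^{J_K-1}/(J_K-1)!$ into $\delta\, t^{J_K-2}/(J_K-2)!$, so the gain is $\delta (J_K-1)/t$ and not $\delta/t$; this extra factor $J_K-1\le A^{K+1}$ is exactly one of the two factors in the paper's prefactor $(A^{K+1})^2$ (it could alternatively be absorbed into $C^{A^{K+1}}$, but it has to appear somewhere). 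Also, the small gap need not lie inside a single sub-interval $[t-kh,t-(k-1)h]$, since $\mathcal{T}_{J,\delta}(h)$ imposes $t_i<t_{i-1}-\delta$ for all $i$; the union over $i_0$ covers this, but your description of the bad set is slightly too narrow.

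The genuine gap is the treatment of the sum over the branching labels $m\in\mathcal{M}_J$. You state a bound of size $(C\alpha t)^{A^{K+1}}\,\delta/t$ for each fixed $(J,m,i_0)$ and then propose to sum over $m$, asserting that this count can be lumped together with $\sum_J$ into $A^{K(K+1)}$. But $|\mathcal{M}_J|=(J_K-1)!$, which is of order $A^{(K+1)A^{K+1}}$, super-exponentially larger than the admissible prefactor $A^{(K+2)(K+1)}$, and your per-term bound as stated contains no compensating $1/(J_K-1)!$, so multiplying it by the number of terms destroys the estimate (recall $C$ may depend only on $\beta$ and $d$). The correct procedure -- and what the paper does -- is never to separate the sum over $m$: the choice of collision partner is precisely the sum $\sum_{i=1}^{s}$ inside the operators $|C_{s,s+1}|$, so one estimates directly the full operator $|Q|_{1,J_K}$ as in Lemma~\ref{continuity}, where the factor $(s+n)^n$ generated by these sums is beaten by the $1/n!$ of the time simplex via Stirling. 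With the sum over $m$ kept inside, the only explicit combinatorial prefactors are the $J_K-1\le A^{K+1}$ choices of the gap position, the factor $J_K-1$ from the simplex-volume ratio, and the at most $\prod_k n_k\le A^{K(K+1)}$ terms of $\sum_J$, which together give $A^{(K+2)(K+1)}$. Your argument is therefore repairable (equivalently, a careful per-$m$ bound does carry an extra $1/(J_K-1)!$, which you would then need to exploit), but as written the term-by-term summation over $\mathcal{M}_J$ does not yield the stated bound.
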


 \begin{proof}
Given $J,m$ the difference $(F_{N,E}^{(1,K)} - F_{N,E,\delta}^{(1,K)}) (J, m)$ involves the integration over two consecutive times such that $|t_{i+1} - t_i | \leq \delta$. This leads to a contribution $\delta t^{J_K-2}/ (J_K-2)!$ instead of  $t^{J_K-1}/ (J_K-1)!$ and there are $J_K-1$ possible choices for the collision with a short time separation.
Modifying accordingly the estimates of Lemma \ref{continuity}, we get for a given $J$
\begin{align*}
& \big\| 
\sum_{m \in \cM_J} \, \big( F_{N,E}^{(1,K)} - F_{N,E,\delta}^{(1,K)} \big) (J, m)
\big\|_{L^\infty ([0,t] \times{\mathbf T}^{d}\times \R^{d})  }  
%& \qquad \qquad 
\leq   (C \alpha t)^{A^{K+1} } \,   \,  { ( A^{K+1} )^2 \,  \delta\over t}   \|\rho^0\|_{L^\infty}\, ,
\end{align*}
where we used that $J_K \leq A^{K+1}$.
Summing over all possible choices of $j_k$ leads to an extra factor $A^{K (K+1)}$ as in 
\eqref{eq: time cut-off}.

%The point is to see that the following continuity estimate holds:
% $$
% \Big\| \big ( Q_{1,n} (t) -Q_{1, n}^\delta (t)\big)    \, f_n \Big\|_{L^\infty ([0,t] \times {\mathbf T}^{d}\times \R^{d})  } \\
% \leq \left( {C  t\over \beta  ^{\frac d2+1}}  \right) ^{n-1 }    	n^2{\delta\over t} \| f_n\|_{\e, n, \beta}\\
% $$
%since the integration with respect to time provides $\delta t^{n-1}/ (n-1)!$ instead of  $t^n/n!$, and there are $n$ possible choices of the integral to be modified.
\medskip

\noindent A similar estimate holds  in the Boltzmann case and completes the proof.
\end{proof}

\subsubsection{Neglecting the pathological pseudo-trajectories}

We now reduce the domain of integration of the velocities and deflection angles
outside  the set $\cB (J, T, m)$ defined in \eqref{eq: bad set} in order to remove the pathological pseudo-trajectories.
We set
\begin{align}
\label{eq: f tronquee}
\tilde f^{(1,K)}_{N,E, \delta}   = \sum_J  \alpha^{J_K-1} 
\; \sum_{m \in \mathcal{M}_J} 
\left( \frac{\eps ^{d-1}}{\alpha} \right)^{J_K-1} \frac{(N-1)!}{ (N-J_K )!} \; \tilde F^{(1,K)}_{N,E, \delta} (J,m)
\end{align}
where %$\sum_J$ stands for $ \sum_{j_1= 1}^{n_1-1} \dots     \sum_{j_K= 0}^{n_K-1} $ and
\begin{align}
\label{eq: F tronquee}
& \tilde F^{(1,K)}_{N,E, \delta} (J,m) \, (t,z_1)\\
& \qquad  := 
     \int_{{\mathcal T}_{J, \delta} (h) }  dT
    \int_{\cB (J, T, m)^c} \!  d \bar \nu \, d \bar V 
    \; \cA(T, z_1, \bar \nu, \bar V) \;  1_{\{ H_{J_K} ( Z_{J_K}(0) ) \leq \frac{E^2}{2} \}}   f_N^{0(J_K)}    (Z_{J_K} (0))  \nonumber
\end{align}
with $\cA(t, z_1,\bar \nu, \bar V)$ as in \eqref{eq: product velocities}.
In the same way, we define
\begin{align}
\label{eq: g tronquee}
\tilde g^{(1,K)}_{\alpha, E, \delta}  = \sum_J \alpha^{J_K-1} \; \sum_{m \in \mathcal{M}_J}  
\tilde G^{(1,K)}_{E, \delta} (J,m)
\end{align}
where the domain of integration is restricted to the complement of $\cB (J, T, m)$
\begin{align}
\label{eq: G tronquee}
&\tilde G^{(1,K)}_{E, \delta} (J,m) \, (t,z_1) \\
& :=  \int_{{\mathcal T}_{J, \delta} (h) }  dT
    \int_{\cB (J, T, m)^c} \!  d \bar \nu \, d \bar V 
    \; \hat \cA(T, z_1,\bar \nu, \bar V) \;  1_{\{ H_{J_K} ( Z^0_{J_K}(0) ) \leq \frac{E^2}{2} \}} \;  
       g^{0(J_K)}    (Z^0_{J_K} (0)) . \nonumber
\end{align}

 \noindent As a consequence of Proposition~\ref{geometric-prop} and of the continuity estimates in Lemma \ref{continuity}, the error induced by neglecting the pathological pseudo-trajectories can be estimated from above.
 \begin{Prop}\label{lastapprox}
  Let $\bar a,\eps_0, \delta$  satisfying {\rm(\ref{sizeofparameters})}. 
There is  a constant~$C$ depending only on~$\beta$ and $d$ such that, as~$N$ goes to infinity in the scaling~$N \eps^{d-1} \alpha^{-1} \equiv 1$, the following holds
$$
\begin{aligned}
&\Big\|     g^{(1,K)}_{\alpha, E,\delta}  - \widetilde g^{(1,K)}_{\alpha, E,\delta}  \Big\|_{L^\infty(   [0,t] \times {\mathbf T}^{d} \times \R^{d})} +\Big\|  f^{(1,K)}_{N,E,\delta}  - \widetilde f^{(1,K)}_{N,E,\delta}  \Big\|_{L^\infty(  [0,t] \times{\mathbf T}^{d} \times \R^{d})}   \\
& \qquad \leq  
  A^{(K+2)(K+1)} (C \alpha t)^{A^{K+1} } 
\left( E^d  \left( {\bar a \over \eps_0}\right)^{d-1} +E^d  (Et  )^d  \eps_0  ^{d-1}+E\left({ \eps_0 \over \delta}\right)^{d-1}  \right)  \|\rho^0\|_{L^\infty} \,  .
\end{aligned}
$$
\end{Prop}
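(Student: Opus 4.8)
The plan is to notice that $g^{(1,K)}_{\alpha,E,\delta}-\widetilde g^{(1,K)}_{\alpha,E,\delta}$ and $f^{(1,K)}_{N,E,\delta}-\widetilde f^{(1,K)}_{N,E,\delta}$ are precisely the contributions of the \emph{bad} set: for each $J$ and each $m\in\mathcal M_J$ they equal the integrals in \eqref{eq: F tronquee}, \eqref{eq: G tronquee} taken over $\cB(J,T,m)$ rather than over its complement. I would first split, according to the \emph{first} pathological adjunction,
$$
\cB(J,T,m)\;=\;\bigsqcup_{i_0=1}^{J_K-1}\cB_{i_0}(J,T,m),
$$
which by the very definition \eqref{eq: bad set} is a disjoint union, $\cB_{i_0}$ being the set of $(\nu_i,v_i)$ for which $(\nu_{i+1},v_{i+1})\notin\cB^{m_i}_i(Z^0_i(t_i))$ for all $i<i_0$ and $(\nu_{i_0+1},v_{i_0+1})\in\cB^{m_{i_0}}_{i_0}(Z^0_{i_0}(t_{i_0}))$.

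On $\cB_{i_0}$ every adjunction of label $\leq i_0$ is admissible, so Proposition~\ref{translation-lem} applies up to the collision time $t_{i_0}$: $Z^0_{i_0}(t_{i_0})$ lies in $\cG_{i_0}(\eps_0)$, and the BBGKY pseudo-trajectory is well defined and stays coupled to it (positions $\eps i_0$-close, hence $\ll\bar a$, velocities equal). Therefore the measure bound \eqref{pathological-size} may be invoked for the next adjunction, giving
$$
\big|\cB^{m_{i_0}}_{i_0}(Z^0_{i_0}(t_{i_0}))\big|\;\leq\;C\,i_0\Big(E^d(\bar a/\eps_0)^{d-1}+E^d(Et)^d\eps_0^{d-1}+E(\eps_0/\delta)^{d-1}\Big),
$$
uniformly in $m_{i_0}\leq i_0$ and in $Z^0_{i_0}\in\cG_{i_0}(\eps_0)$ --- which is exactly what makes the argument go through. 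Then I would bound the $\cB_{i_0}$-contribution by repeating the estimates of the proofs of Propositions~\ref{prop: remainders}, \ref{truncatedenergy} and \ref{delta2smallHS}: all collision operators of label $\neq i_0$ are handled by the continuity bounds of Lemma~\ref{continuity} (the transport operators preserving the weighted norms, and \eqref{estimatefNktweight}, \eqref{estimategNktweight} controlling the initial marginals together with the Gaussian weight), while the $i_0$-th operator is bounded crudely by $2E\,\big|\cB^{m_{i_0}}_{i_0}(Z^0_{i_0}(t_{i_0}))\big|$, using that on the energy shell $\{H_{J_K}\leq E^2/2\}$ and on $\big(\mathbf S^{d-1}\times B_E\big)^{J_K-1}$ the kernel $|(v_{i_0+1}-v_{m_{i_0}}(t_{i_0}))\cdot\nu_{i_0+1}|$ does not exceed $2E$. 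Integrating the collision times over $\cT_{J,\delta}(h)\subset\cT_J(h)$ (cf.\ \eqref{eq: delay time}) yields the usual $t^{J_K-1}/(J_K-1)!$, which together with the Stirling-type estimate from Lemma~\ref{continuity} and $J_K\leq\mathcal N_K\leq A^{K+1}$ produces a factor $(C\alpha t)^{A^{K+1}}$.

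It then remains to sum: over $i_0\leq J_K-1$ (a harmless extra factor $\leq A^{2(K+1)}$), over the labels $m\in\mathcal M_J$, and over $j_1,\dots,j_K$ with $j_k<n_k=A^k$, exactly as at the end of the proof of Proposition~\ref{delta2smallHS}; this produces the combinatorial prefactor $A^{(K+2)(K+1)}$, and collecting everything one arrives at the claimed inequality. The Boltzmann side is identical, with the simplification that $Z^0$ is automatically well defined, so only the part of Proposition~\ref{translation-lem} asserting $Z^0_{i_0}(t_{i_0})\in\cG_{i_0}(\eps_0)$ is needed. The one point requiring care is the bookkeeping at the $i_0$-th collision: one must ``open up'' the chain of operators $|Q|$ (resp.\ $|Q^0|$) there, retain the label $m_{i_0}$ so as to be able to apply \eqref{pathological-size}, and check that redistributing the exponential weight $e^{-\beta H_k}$ as in Lemma~\ref{continuity} leaves enough decay after the crude $i_0$-th step; the rest is a routine rerun of the estimates already performed for Propositions~\ref{prop: remainders}--\ref{delta2smallHS}.
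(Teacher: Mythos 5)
Your proposal follows essentially the same route as the paper's proof: you identify the difference with the contribution of the pathological set $\cB(J,T,m)$, decompose it according to the first bad adjunction $i_0$ (so that Proposition~\ref{translation-lem} guarantees $Z^0_{i_0}(t_{i_0})\in\cG_{i_0}(\eps_0)$ and the measure bound \eqref{pathological-size} of Proposition~\ref{geometric-prop} applies), treat all other collision operators by the continuity estimates of Lemma~\ref{continuity}, and sum over $i_0$, $m$ and $J$ to produce the prefactor $A^{(K+2)(K+1)}$, exactly as the paper does. The only cosmetic deviation is your crude bound of the $i_0$-th kernel by $2E$, which yields an extra factor $E$ relative to the stated right-hand side; the paper instead absorbs this kernel into the Gaussian weight as in Lemma~\ref{continuity}, and in any case the loss is immaterial for the eventual choice $E\sim\sqrt{|\log\eps|}$.
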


\begin{proof}
The proof follows  the same lines as the proofs of Propositions \ref{truncatedenergy} and \ref{delta2smallHS}. 
In the usual continuity estimate for the elementary collision operator, the integration with respect to velocity brings a factor $(2\pi /\beta)^{d/2}$, while removing the integration over the pathological set~${\mathcal B}_{k}^{m_{k}}$ gives an error
\begin{equation}
\label{eq: erreur Bm}
C k \left( E^d  \left( {\bar a \over \eps_0}\right)^{d-1} +E^d \Big(Et \Big)^d \, \eps_0^{d-1}+E\left({ \eps_0 \over \delta}\right)^{d-1}  \right)
\end{equation}
according to Proposition~\ref{geometric-prop}.

\noindent

For a given $J$, there are $J_K - 1\leq A^{K+1}$ possible choices of the integral to be modified. 
Therefore, the estimate on the collision operator leads to 
$$
\begin{aligned}
&\big\| 
\sum_{m \in \cM_J} \, \big( \widetilde F_{N,E,\delta}^{(1,K)} - F_{N,E,\delta}^{(1,K)} \big) (J, m)
\big\|_{L^\infty ([0,t] \times{\mathbf T}^{d}\times \R^{d})  } \\
%&\Big\|     g^{(1,K)}_{\alpha,E,\delta}  - \widetilde g^{(1,K)}_{\alpha, E,\delta}  \Big\|_{L^\infty(  [0,t] \times{\mathbf T}^{d} \times \R^{d})} 
%+
%\Big\| f^{(1,K)}_{N,E,\delta}  - \widetilde f^{(1,K)}_{N,E,\delta} \Big\|_{L^\infty(  [0,t] \times{\mathbf T}^{d} \times \R^{d})}    \\
& \qquad  \leq     (C t)^{A^{K+1} } A^{2 (K+1)}\left( E^d  \left( {\bar a \over \eps_0}\right)^{d-1} +E^d (Et  )^d  \eps_0  ^{d-1}+E\left({ \eps_0 \over \delta}\right)^{d-1}  \right)  \|\rho^0\|_{L^\infty}
\end{aligned}
$$
where as previously $C$ depends only on $d$ and $\beta$.
The term $A^{2 (K+1)}$ comes from the $A^{K+1}$ possible choices of the integral to be modified and from
the additional factor $k \leq A^{K+1}$ in \eqref{eq: erreur Bm}.

\noindent Finally summing over all the possible choices of $J = (j_1, \dots, j_K)$ provides the additional factor~$A^{K(K+1)}$ in the estimate. Similar bounds hold also for the Boltzmann hierarchy. This completes the Proposition.
\end{proof}

\noindent 
Once the pathological pseudo-trajectories have been removed, the integrals \eqref{eq: F tronquee} and \eqref{eq: G tronquee} differ only by the small error on the positions $Z_{J_K} (0), Z^0_{J_K} (0)$ and by the initial data  $f_N^{0(J_K)}$ and~$ g^{0(J_K)}$.
Thus, one gets
\begin{Prop}
\label{lastapprox250000}
There is a constant~$C$ depending only on $\beta$ and $d$ 
 such that, as~$N$ goes to infinity in the scaling~$N \eps^{d-1}= \alpha$, 
 the following holds
\begin{align*}
\Big\|  \tilde f^{(1,K)}_{N,E, \delta} - \tilde g^{(1,K)}_{\alpha, E, \delta} 
%\widetilde g^{(1,K)}_{N,E,\delta}  - \widetilde g^{(1,K)}_{ N,E,\delta,\eps}  
\Big\|_{L^\infty(   [0,t] \times {\mathbf T}^{d} \times \R^{d})}   
\leq   A^{K(K+1)} (C \alpha t)^{A^{K+1} }  \left( \frac{A^{2(K+1)}}{N} + \alpha \eps \right)  \|\rho^0\|_{L^\infty} \,  .
\end{align*}
%where $| \nabla \rho^0 |$ stands for the Lipschitz norm of $\rho^0$ introduced in \eqref{initial}.
\end{Prop}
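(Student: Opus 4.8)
The plan is to exploit the fact that, after the three successive truncations of Propositions~\ref{truncatedenergy}, \ref{delta2smallHS} and~\ref{lastapprox}, the functionals $\tilde F^{(1,K)}_{N,E,\delta}(J,m)$ of~\eqref{eq: F tronquee} and $\tilde G^{(1,K)}_{E,\delta}(J,m)$ of~\eqref{eq: G tronquee} (abbreviated below as $\tilde F(J,m)$ and $\tilde G(J,m)$) are integrated against exactly the same data: the same time set $\mathcal T_{J,\delta}(h)$, the same angle--velocity domain $\cB(J,T,m)^c$, which by~\eqref{eq: bad set} is defined purely through the Boltzmann pseudo-trajectory $Z^0$, and the same integration variables $(T,\bar\nu,\bar V)$. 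By Proposition~\ref{translation-lem}, along the coupled pseudo-trajectories the velocities of $Z$ and $Z^0$ coincide at every time and $x_1(u)=x_1^0(u)$ on $[0,t]$, while the other positions satisfy $|x_\ell(u)-x_\ell^0(u)|\le \eps J_K$. I would record two consequences: first, since the weights $\cA$ and $\hat\cA$ of~\eqref{eq: product velocities} depend only on the $\nu$'s and on the velocities, one has $\cA(T,z_1,\bar\nu,\bar V)=\hat\cA(T,z_1,\bar\nu,\bar V)$ identically; second, since $H_{J_K}$ depends only on velocities, the energy cut-offs $\indc_{\{H_{J_K}(Z_{J_K}(0))\le E^2/2\}}$ and $\indc_{\{H_{J_K}(Z_{J_K}^0(0))\le E^2/2\}}$ coincide as well. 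Hence the integrand of $\tilde F(J,m)$ differs from that of $\tilde G(J,m)$ only through the combinatorial prefactor $(\eps^{d-1}/\alpha)^{J_K-1}(N-1)!/(N-J_K)!$ carried by $\tilde f^{(1,K)}_{N,E,\delta}$, and through the replacement of $f^{0(J_K)}_N$ evaluated at $Z_{J_K}(0)$ by $g^{0(J_K)}$ evaluated at $Z^0_{J_K}(0)$.

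Using $\eps^{d-1}/\alpha=1/N$ in the Boltzmann--Grad scaling, I would split, for each $J$ and $m$,
\begin{equation*}
\Big(\tfrac{\eps^{d-1}}{\alpha}\Big)^{J_K-1}\frac{(N-1)!}{(N-J_K)!}\,\tilde F(J,m)-\tilde G(J,m)
=\Big(\prod_{j=1}^{J_K-1}\Big(1-\tfrac jN\Big)-1\Big)\tilde F(J,m)+\big(\tilde F(J,m)-\tilde G(J,m)\big).
\end{equation*}
For the first term, $\prod_{j=1}^{J_K-1}(1-j/N)\in[1-J_K^2/(2N),1]$, so the prefactor defect is at most $A^{2(K+1)}/N$ since $J_K\le\mathcal N_K\le A^{K+1}$. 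It is then multiplied by $\alpha^{J_K-1}\sum_m|\tilde F(J,m)|$, which is controlled (a restriction of the full collision integral) by $2\alpha^{J_K-1}\,|Q|_{1,J_K}(t)\,|f^{0(J_K)}_N|$; by Lemma~\ref{continuity} together with~\eqref{estimatefNktweight} (and $\alpha t>1$) the latter is $\le(C\alpha t)^{A^{K+1}}\|\rho^0\|_{L^\infty}$. Summing over $j_1,\dots,j_K$ (a factor at most $A^{K(K+1)}$) produces the $A^{K(K+1)}(C\alpha t)^{A^{K+1}}A^{2(K+1)}N^{-1}\|\rho^0\|_{L^\infty}$ contribution.

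For the second term I would observe that $g^{0(J_K)}(Z_s)=\rho^0(x_1)M_\beta^{\otimes J_K}(V_{J_K})$ depends on the configuration only through $x_1$ and the velocities, which are common to $Z_{J_K}(0)$ and $Z_{J_K}^0(0)$ by Proposition~\ref{translation-lem}; hence $g^{0(J_K)}(Z^0_{J_K}(0))=g^{0(J_K)}(Z_{J_K}(0))$, so the $O(\eps J_K)$ position error is immaterial here. It remains to estimate $f^{0(J_K)}_N(Z_{J_K}(0))-g^{0(J_K)}(Z_{J_K}(0))$: since the pathological (recolliding) pseudo-trajectories have been discarded, $Z_{J_K}(0)\in{\mathcal D}^{J_K}_\eps$ (by Proposition~\ref{translation-lem}, cf.\ Proposition~\ref{geometric-prop}), so Proposition~\ref{exclusion-prop2} gives the pointwise bound $C^{J_K}\eps\alpha\,M_\beta^{\otimes J_K}(V_{J_K})\|\rho^0\|_{L^\infty}$. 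Inserting this into the integral defining $\tilde F(J,m)-\tilde G(J,m)$ and using once more the continuity estimate of Lemma~\ref{continuity} (the Gaussian weight $M_\beta^{\otimes J_K}$ being absorbed by the exponential weight $\lambda=\beta$), then summing over $m$ and over $J$, produces the $A^{K(K+1)}(C\alpha t)^{A^{K+1}}\alpha\eps\,\|\rho^0\|_{L^\infty}$ contribution. Adding the two terms gives the claimed bound. No analytic difficulty remains at this stage — the exclusion of recollisions, which is the substance of the argument, has already been carried out in Propositions~\ref{geometric-prop}, \ref{translation-lem} and~\ref{lastapprox}; the one point requiring care is the verification that the two truncated functionals are integrated against literally identical sets and weights, so that their difference collapses to the elementary prefactor and initial-data discrepancies above.
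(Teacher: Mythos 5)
Your proposal is correct and follows essentially the same route as the paper's own proof: both reduce the difference to the combinatorial prefactor defect $1-\frac{(N-1)\dots(N-J_K+1)}{N^{J_K-1}}\leq C J_K^2/N$ and to the initial-data discrepancy, using that $g^{0(J_K)}(Z_{J_K}(0))=g^{0(J_K)}(Z^0_{J_K}(0))$ since the coupled pseudo-trajectories share the same velocities and $x_1=x_1^0$, and then invoking Proposition~\ref{exclusion-prop2} together with the continuity estimates of Lemma~\ref{continuity} and the summation over $J$ and $m$. Your extra remarks (that $\cA=\hat\cA$ and that the two energy cut-offs coincide along the coupling) only make explicit points the paper leaves implicit.
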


\begin{proof}  
There are 2 sources of discrepancies between  \eqref{eq: f tronquee} and \eqref{eq: g tronquee}.

\smallskip

\noindent
$\bullet$
{\it The prefactors in the collision operators} :
In \eqref{eq: f tronquee}, the elementary collision operators have prefactors of the type $ (N-k) \eps^{d-1}/\alpha$ that can be replaced in the limit by $1$.
For fixed $J_K$, the corresponding error is 
$$ \Big( 1- {(N-1) \dots (N- J_K+1) \over N^{J_K+1}}\Big)  \leq C {J_K^2 \over N}$$
which, combined with the bound on the collision operators,  leads to an error of the form 
$$ A^{K(K+1)} (C \alpha t)^{A^{K+1} } \frac{A^{2(K+1)} }{N}   \|\rho^0\|_{L^\infty}\,\cdotp$$
%We therefore conclude that 
%$$\Big\|   \widetilde f^{(1,K)}_{N,E,\delta}-\widetilde   g^{(1,K)}_{N,E,\delta
% } \Big\|_\infty \leq C\mu_N^2 2^{K(K+1)} (C t)^{2^{K+1} }\Big(  2^{2K}\mu_N \Lambda_N \eps +2^{2K} \eps^{d-1}   \Big) \,,$$
%which concludes the proof.

\medskip

\noindent
$\bullet$ {\it Discrepancy between $f_N^{0(J_K)}(Z_{J_K} (0))$  and $g^{0(J_K)}(Z^0_{J_K} (0))$} : 
First of all, we note that for the coupled pseudo-trajectories
$$
g^{0(J_K)}   (Z_{J_K} (0)) = g^{0(J_K)}   (Z_{J_K} ^0(0)). 
$$ 
Indeed, by construction both pseudo-trajectories have the same velocities and $x_1 = x^0_1$. 
The differences between the two configurations are only on the positions of the particles added and $g^{0(J_K)}$ is independent of these positions.

\smallskip

\noindent By Proposition~\ref{translation-lem}, the initial data satisfies $Z_{J_K}(0)\in \cG_{J_K}(\eps_0/2)$.
According to Proposition~\ref{exclusion-prop2}, we have
$$
\Big\| \indc_{  \cG_{J_K}(\eps_0/2)}(  f_N^{0(J_K)} - g^{0(J_K)}) \Big\|_{0,J_K,\beta } \leq   \|\rho^0\|_{L^\infty} \, 
C^{J_K} \,  \alpha \eps \,.
$$ 
% It follows that XXXXXXXX
%$$
%|\lambda^{d } f_N^{0(J_K)} -  |\lambda^{d(1-J_K)} f_N^{0(J_K)} - M_\beta^{\otimes J_K} \varphi^0M_\beta^{\otimes J_K} \varphi^0
%$$
%$$\indc_{Z_k \in {\mathcal D}_k}f^{(k)}_{0} 
% -  \lambda^{dk}  f_{0,N}^{(k)} = \Big(1 - \lambda^{dk} {\mathcal Z}_{N}^{-1} {\mathcal Z}_{N-k}\Big) \indc_{Z_k \in {\mathcal D}_k}f^{(k)}_{0}  
%+  \lambda^{dk} {\mathcal Z}_N^{-1} {\mathcal Z}^\flat_{(k+1,N)} \indc_{Z_k \in {\mathcal D}_k}f^{(k)}_{0}   $$
%with 
%$$ 
% \Big|1 - \lambda^{dk} {\mathcal Z}_{N}^{-1} {\mathcal Z}_{N-k}\Big| \leq (1-\eps \kappa_d )^{-k} - 1\leq \e k \k_d $$
% and
% $$ \lambda^{dk}  {\mathcal Z}_N^{-1} {\mathcal Z}^\flat_{(k+1,N)}  \leq \e k \k_d \big(1 - \e \k_d \big)^{-(k+1)} \,.$$
 Using the continuity estimate in Lemma \ref{continuity}, we then deduce that 
the error due to the initial data can be controlled by
$$   \|\rho^0\|_{L^\infty} \, A^{K(K+1)} (C \alpha t)^{A^{K+1} }   \, \alpha \eps .$$
This concludes Proposition \ref{lastapprox250000}.
\end{proof}

\subsubsection{Estimate of the main term}
\label{subsec: Estimate of the main term}

Finally combining the previous estimates, we get 
\begin{Prop}
\label{main}
For  parameters satisfying \eqref{eq: parametre conditions} and such that 
\begin{equation}
\label{eq: parameters}
\alpha t  \ll  \big( \log \log N \big)^{\frac{A-1}{A}}
\quad \text{and} \quad 
K \leq \frac{\log \log N }{2 \log A}
\end{equation}
then as $N$ goes to infinity
\begin{equation}
\label{eq: scaling partiel}
\big\|  f_N^{(1,K)} - g_\alpha^{(1,K)}  \big \|_{L^\infty([0,t] \times {\mathbf T}^d\times \R^d)}
\leq     \|\rho^0\|_{L^\infty} \,  \eps^{\frac{d-1}{d+1}} \, \exp \left( C  \,  (\log N)^{1/2}
\, \log \log N  \right).
\end{equation}
\end{Prop}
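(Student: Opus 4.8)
The plan is to assemble the desired estimate by telescoping the four one–sided comparison bounds of Propositions~\ref{truncatedenergy},~\ref{delta2smallHS},~\ref{lastapprox} and~\ref{lastapprox250000} (together with their Boltzmann counterparts), and then to tune the free parameters $E,\delta,\eps_0,\bar a$ so that each resulting error is at most $\eps^{(d-1)/(d+1)}$ times a common prefactor which the constraints~\eqref{eq: parameters} force to be subpolynomial in $N$. Writing all norms as $L^\infty$–norms on $[0,t]\times\T^d\times\R^d$, one has
\begin{align*}
\big\| f_N^{(1,K)} - g_\alpha^{(1,K)} \big\|
&\leq \big\| f_N^{(1,K)} - f_{N,E}^{(1,K)} \big\| + \big\| f_{N,E}^{(1,K)} - f_{N,E,\delta}^{(1,K)} \big\| + \big\| f_{N,E,\delta}^{(1,K)} - \tilde f_{N,E,\delta}^{(1,K)} \big\| \\
&\quad + \big\| \tilde f_{N,E,\delta}^{(1,K)} - \tilde g_{\alpha,E,\delta}^{(1,K)} \big\| + \big\| \tilde g_{\alpha,E,\delta}^{(1,K)} - g_{\alpha,E,\delta}^{(1,K)} \big\| + \big\| g_{\alpha,E,\delta}^{(1,K)} - g_{\alpha,E}^{(1,K)} \big\| + \big\| g_{\alpha,E}^{(1,K)} - g_\alpha^{(1,K)} \big\| ,
\end{align*}
and the seven terms are controlled respectively by Propositions~\ref{truncatedenergy},~\ref{delta2smallHS},~\ref{lastapprox},~\ref{lastapprox250000},~\ref{lastapprox},~\ref{delta2smallHS},~\ref{truncatedenergy}.

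First I would isolate the common prefactor. Every quoted bound carries a factor of the form $A^{(K+2)(K+1)}(C\alpha t)^{A^{K+1}}$. Under~\eqref{eq: parameters}, the bound $K\leq\frac{\log\log N}{2\log A}$ gives $A^{K}\leq(\log N)^{1/2}$, hence $A^{K+1}\leq A(\log N)^{1/2}$ and $A^{(K+2)(K+1)}\leq\exp\!\big(C(\log\log N)^2\big)$; combined with $\alpha t\ll(\log\log N)^{(A-1)/A}$ (so that $\log(C\alpha t)\leq C\log\log N$) this yields
\[
A^{(K+2)(K+1)}(C\alpha t)^{A^{K+1}}\;\leq\;\exp\!\big(C(\log N)^{1/2}\log\log N\big)\;=:\;P .
\]
Since $P$ already dominates every factor that is polynomial in $\log N$, in $t$ or in $E$, it then suffices to choose the parameters so that the purely geometric part of each of the seven error terms is $\le\eps^{(d-1)/(d+1)}$ up to such polynomial corrections.

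Next I would fix the parameters. Using $(d-1)|\log\eps|=\log(N/\alpha)$, set $E:=c_\beta|\log\eps|^{1/2}$ with $c_\beta$ large enough that $e^{-\beta E^2/4}\le\eps^{(d-1)/(d+1)}$ (handling the energy truncation of Proposition~\ref{truncatedenergy}), and then
\[
\eps_0:=\eps^{\frac{d}{d+1}},\qquad \delta:=t\,\eps^{\frac{d-1}{d+1}},\qquad \bar a:=A^{K+2}\eps .
\]
One checks the ordering~\eqref{sizeofparameters}: $A^{K+1}\eps<\bar a=A^{K+2}\eps\ll\eps_0\ll\min(\delta E,1)$, since $A^{K+2}\eps^{1/(d+1)}\to0$ and $\eps^{1/(d+1)}\ll t\,E$; one also checks $A^K\delta\ll h$ needed in Section~\ref{timetruncation}, as $\eps^{(d-1)/(d+1)}\,KA^K\to0$. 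With these choices, Proposition~\ref{delta2smallHS} contributes $\delta/t=\eps^{(d-1)/(d+1)}$; the three contributions in Proposition~\ref{lastapprox} become $E^d(\bar a/\eps_0)^{d-1}\lesssim(\log N)^{C}\eps^{\frac{d-1}{d+1}}$, $E^d(Et)^d\eps_0^{d-1}\lesssim(\log N)^{C}t^d\eps^{\frac{d(d-1)}{d+1}}\le(\log N)^{C}t^d\eps^{\frac{d-1}{d+1}}$, and $E(\eps_0/\delta)^{d-1}\le E\,\eps^{\frac{d-1}{d+1}}$; finally in Proposition~\ref{lastapprox250000} both $A^{2(K+1)}/N$ and $\alpha\eps$ are $\ll\eps^{(d-1)/(d+1)}$, the latter because $\alpha\eps\,\eps^{-\frac{d-1}{d+1}}=\alpha\,\eps^{\frac{2}{d+1}}=\alpha^{1+\frac{2}{d^2-1}}N^{-\frac{2}{d^2-1}}\to0$ since $\alpha$ grows much more slowly than any power of $N$. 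Summing the seven terms and absorbing all powers of $\log N$, of $t$ and of $E$ into $P$ gives exactly~\eqref{eq: scaling partiel}.

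The only substantive point beyond this bookkeeping is the simultaneous balancing: the three geometric errors of Proposition~\ref{lastapprox} pull $\eps_0$ and $\delta$ in opposite directions — a small $\eps_0$ helps the middle term, large $\eps_0$ and large $\delta$ help the outer two, while large $\delta$ worsens the time–separation error of Proposition~\ref{delta2smallHS} — so I expect the main work to be verifying that $\tfrac{d-1}{d+1}$ is precisely the largest exponent compatible with the floor $\bar a\gtrsim A^{K+1}\eps$ (i.e. $\bar a$ essentially of size $\eps$), and that the ordering~\eqref{sizeofparameters} survives the extra $A^{K+2}$ and logarithmic-in-$N$ factors carried by $\bar a$ and $E$. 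The choice $\eps_0=\eps^{d/(d+1)}$, $\delta=t\,\eps^{(d-1)/(d+1)}$, $\bar a\sim\eps$ is exactly the one saturating all the relevant inequalities at that common exponent, so no room is wasted; this is the same optimization as in~\cite{GSRT}, with the global a priori bounds of Proposition~\ref{apriori-est} replacing the short-time Cauchy–Kovalevskaya control.
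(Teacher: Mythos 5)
Your proposal is correct and follows essentially the same route as the paper: the triangle inequality through the chain of truncations (Propositions~\ref{truncatedenergy}, \ref{delta2smallHS}, \ref{lastapprox}, \ref{lastapprox250000}), the same parameter choices $\delta\sim\eps^{\frac{d-1}{d+1}}$, $\eps_0\sim\eps^{\frac{d}{d+1}}$, $E\sim\sqrt{|\log\eps|}$, $\bar a\sim A^{K}\eps$, and the same absorption of the prefactor $(C\alpha t)^{A^{K+1}}A^{C K^2}$ into $\exp\big(C(\log N)^{1/2}\log\log N\big)$ via $A^K\leq\sqrt{\log N}$ and $\alpha t\ll(\log\log N)^{\frac{A-1}{A}}$. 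The only differences (writing the telescoping in seven explicit terms, the harmless factors $t$ in $\delta$ and $A$ in $\bar a$) are cosmetic.
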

\noindent In particular, Estimate \eqref{eq: error estimate (1,K)} follows from Proposition \ref{main}.

\begin{proof}
We write 
\begin{equation*}
\big\|  f_N^{(1,K)} - g^{(1,K)}_\alpha \big \|_{L^\infty} 
\leq \big\| f_N^{(1,K)} -  \tilde f^{(1,K)}_{N,E, \delta}  \big \|_{L^\infty}
+\big\|  g^{(1,K)}_\alpha - \tilde g^{(1,K)}_{\alpha, E, \delta} \big \|_{L^\infty}
+
\big\|   \tilde f^{(1,K)}_{N,E, \delta} - \tilde g^{(1,K)}_{\alpha, E, \delta} \big \|_{L^\infty} .
\end{equation*}
Let $\bar a,\eps_0,\delta, E$  satisfying {\rm(\ref{sizeofparameters})}. 
By gathering together the estimates in Propositions  
 \ref{truncatedenergy}, \ref{delta2smallHS}, \ref{lastapprox} and \ref{lastapprox250000}, we see that 
there exists $C$ depending only on $\beta$ and $d$ such that, as~$N$ goes to infinity in the scaling~$N \eps^{d-1} \alpha^{-1} \equiv 1$, the following holds 
$$
\begin{aligned}
& \big\|  f_N^{(1,K)} - g^{(1,K)}_\alpha  \big \|_{L^\infty([0,t] \times {\mathbf T}^d\times \R^d)} \\
 &\quad \leq %\ C_0 \,  \gamma^2 + 
   A^{K(K+1)} (C  \alpha t)^{A^{K+1} } \left( e^{-\frac \beta 4  E^2} + {A^{2 (K+1)} \delta \over t}  \right)  \|\rho^0\|_{L^\infty} \\
&\qquad +     A^{ (K+2) (K+1)} (C  \alpha  t)^{A^{K+1} } 
\left( E^d  \left( {\bar a \over \eps_0}\right)^{d-1} +E^d \Big(Et \Big)^d \eps_0^{d-1}+E\left({ \eps_0 \over \delta}\right)^{d-1} \right) \,  \|\rho^0\|_{L^\infty} \\
&\qquad +  A^{K(K+1)} (C  \alpha t)^{A^{K+1} }   
\left( \frac{A^{2(K+1)}}{N} + \eps \alpha \right) \, \|\rho^0\|_{L^\infty} 
\end{aligned}
$$
with $A, K$ introduced in Proposition~{\rm{\ref{prop: remainders}}}.

\medskip
 
\noindent To derive the upper bound \eqref{eq: scaling partiel},  we choose for
the parameters   the following orders of magnitude
$$
\delta \sim \eps^{\frac{d-1}{d+1}} \, , \quad \eps_0 \sim  \eps^{\frac{d }{d+1}} \, ,  \quad E \sim \sqrt {|\log \eps|} , \quad \bar a = A^{K+1} \eps \, .
% \, ,  \quad K \leq \frac{\log |\log \eps|}{2 \log A}
$$
This leads to 
\begin{align*}
\big\|  f_N^{(1,K)} - g_\alpha^{(1,K)}  \big \|_{L^\infty([0,t] \times {\mathbf T}^d\times \R^d)}
&\leq  ( C \, \alpha t)^{A^{K+1} }  A^{2 K (K+1) } \, \big( \eps ^{\frac{d-1}{d+1}} |\log \eps|^d  + \eps^{d-1}\big) 
\| \rho^0 \|_\infty
\end{align*}
from which \eqref{eq: scaling partiel} can be deduced in the scaling \eqref{eq: parameters}
since $A^K \leq \sqrt{\log N}$. 
\end{proof}

\bigskip

\noindent Equipped with all these estimates, we  prove now Theorem \ref{long-time}.
\begin{proof}[Proof of Theorem {\rm\ref{long-time}}]
  Propositions  \ref{prop: remainders} and \ref{main} imply  that
with the scaling \eqref{eq: parameters}
$$
\begin{aligned}
  \big\|  f_N^{(1)} - g_\alpha  \big \|_{L^\infty([0,t] \times {\mathbf T}^d\times \R^d)}
&\leq  C \,   \, \left(  \gamma^A +   C_0 \,   \eps^{\frac{d-1}{d+1}} \, \exp \left( C  \,  (\log N)^{1/2}
\, \log \log N  \right) \right)   \|\rho^0\|_{L^\infty}
\\
&
\leq  C \,   \, \left(  \frac{(\alpha t)^{A/(A-1)}}{\log \log N} \right)^A  \|\rho^0\|_{L^\infty}
,
\end{aligned}
$$
where we have used the relation $\gamma = \frac{ (\alpha t)^{A/(A-1)} }{C K}$ of \eqref{eq: tau nk}
with the choice $K = \lfloor \frac{\log \log N }{2 \log A} \rfloor$.
This conludes the proof of Theorem {\rm\ref{long-time}}.

\noindent Note that the relevant scaling for this upper bound is $\alpha t = o \Big(  (\log \log N)^{(A-1)/A} \Big)$.
\end{proof}

%%%%%%%%%%%%%%%%%%%%%%%%%%%%%%%%%%%%%%%%%%%%%%%%
%%%%%%%%%%%%%%%%%%%%%%%%%%%%%%%%%%%%%%%%%%%%%%%%

\section{Proof of the diffusive limit : proof of Theorem \ref{brownien}}\label{proofbrownien}\noindent 
In Theorem \ref{long-time}, we have shown that the tagged particle distribution $ f_N^{(1)}(t,x,v)$ remains close to~$M_\beta(v) \varphi_\alpha(t,x,v)$ where~$\varphi_\alpha$ solves  the linear-Boltzmann equation~(\ref{linear-boltzmann}) 
 on $\T ^d\times \R^d$,  with initial data~$\rho^0 (x)$.
More generally, our proof implies that the whole trajectory of the tagged particle $\{ x_1 (s) \}_{s \leq T }$ can be approximated with high probability by the trajectory of~$\{   x_1^0 (s) \}_{s \leq T }$ (see Lemma~\ref{translation-lem}).
The latter process is much simpler to study as its velocities are given by a  Markov process.

\smallskip
\noindent These two points of view lead to two strategies to prove the diffusive limit. 
We   first present an analytic approach to show that $  \varphi_\alpha(\alpha\tau,x,v)$ can be approximated by the diffusion~\eqref{heat}.
Then we turn to an alternative method to show the convergence of the   trajectory 
to a brownian  motion which will rely on probabilistic estimates for $\{   x^0_1 (\alpha \tau) \}_{\tau \leq T}$.

\medskip

\noindent In the following  the macroscopic time variable will be denoted   by $\tau\in [0,T]$.

\subsection{Convergence to the heat equation}
\label{subsec: Convergence to the heat equation}
In this section we  prove the result~(\ref{limitfN1toheat}) stating the convergence of $ f_N^{(1)}(\alpha\tau,x,v)$ to~$M_\beta(v) \rho( \tau,x)$ where~$\rho$ solves  the heat equation~(\ref{heat})  on~$\T ^d $,  with initial data~$\rho^0 (x)$. We show in Paragraph~\ref{boilsdown} that this can be reduced    to proving that~$\varphi_\alpha(\alpha\tau,x,v)$ 
can be approximated by a diffusion, which is a standard procedure (see~\cite{BSS}). For the sake of completeness, we recall the salient features of the proof in Paragraphs~\ref{hilbert} and~\ref{proodoftheconvergence}.

 \subsubsection{Approximation by the linear Boltzmann equation}
 \label{boilsdown}

The explicit convergence rate provided in Theorem \ref{long-time} implies in particular that for any~$\tau>0$ and any~$\alpha > 1$,  in the limit~$N \to \infty$,~$N\varepsilon^{d-1} \alpha^{-1}= 1$, one has
\begin{equation}
\label{eq: approx temps gd2}
\big\|  f_N^{(1)}(\alpha \tau,  x,v)- M_\beta(v) \varphi _\alpha (\alpha \tau , x,v) 
\big\|_{L^\infty( {\mathbf  T}^d\times  {\mathbf R}^d)} \leq C   
\left[ \frac{ \alpha^2  \tau}{ ({\log\log N})^{\frac{A-1}{A} } } \right]^{\frac{A^2}{A-1}} \, ,
\end{equation}
where $A\geq 2$ can be taken arbitrarily large. It is therefore possible to   take the limit~$\alpha \to \infty$ while conserving a small right-hand side in~(\ref{eq: approx temps gd2}), as soon as~$\alpha \ll    ({\log \log N})^{\frac{A-1}{2A}}$. 

\smallskip
\noindent Let us define
$$
\widetilde \varphi_\alpha (\tau,x,v):=   \varphi_\alpha (\alpha\tau,x,v) \, , 
$$
which satisfies
\begin{equation}
\label{eq: widetildephialpha}
\partial_\tau \widetilde \varphi_\alpha + \alpha \, v \cdot \nabla_x  \widetilde \varphi_\alpha + \alpha^2 {\mathcal L}   \widetilde \varphi_\alpha  = 0 \, .
\end{equation}
Then~(\ref{limitfN1toheat}) follows directly from the following result
\begin{equation}\label{limitfN1toheat2}
\sup_{\tau  \in [0,T]} \sup_{(x,v) \in  {\mathbf T}^d \times {\mathbf R}^d} 
\Big |M_\beta(v) \big( \widetilde \varphi_\alpha (\tau,x,v) -  \rho (\tau, x)  \big)  \Big| 
\to 0
\end{equation}
in the limit~$\alpha\to \infty$.  The rest of this paragraph is devoted to the proof of~(\ref{limitfN1toheat2}). Notice that by the maximum principle  on the heat equation, we may assume without loss of generality 
(up to regularizing $\rho^0$) that~$\rho^0$ belongs to~$C^4(\T^d)$, which will be useful at the end of the proof.  

 \subsubsection{Hilbert's expansion} \label{hilbert}The formal Hilbert expansion consists in writing an asymptotic expansion of~$ \widetilde \varphi_\alpha $ in terms of powers of~$\alpha^{-1}$
 $$
 \widetilde \varphi_\alpha (\tau,x,v)  =  \widetilde\rho_{0} (\tau, x,v)  + \frac1\alpha\widetilde  \rho_{1} (\tau, x,v) +   \frac1{\alpha^2} \widetilde \rho_{2} (\tau, x,v) 
+ \dots \, ,
 $$
 in plugging that expansion in Equation~(\ref{eq: widetildephialpha}),
 and  in canceling successively all the powers of~$\alpha$. This gives formally the following set of equations (where we have considered only the~$O(1)$,~$O(\alpha)$ and~$O(\alpha^2)$  terms)
 \begin{equation}\label{eqhilbert}
 \begin{aligned}
 {\mathcal L} \widetilde  \rho _{  0}= 0  \, , \\
  v \cdot \nabla_x\widetilde\rho_{  0} +  {\mathcal L}\widetilde  \rho_{ 1} = 0 \, , \\
 \partial_\tau \widetilde\rho_{ 0} +    v \cdot \nabla_x\widetilde\rho_{ 1} +  {\mathcal L} \widetilde \rho_{ 2} = 0  \, .\end{aligned}
\end{equation}
 In order to find the expressions for~$\widetilde\rho_{ 1}$ and~$\widetilde\rho_{ 2}$, as well as the equation on~$\widetilde\rho_{  0}$ (which we expect to be the heat equation),  it is necessary   to be able to invert the operator~$\mathcal{L}$. This is made possible by the following result, whose proof can be found in~\cite{hilbert2} (in the case of the linearized Boltzmann equation, but it can easily be  adapted to our situation).
 In the following, we define
 $$
a_\beta (v) := \int_{{\mathbf S}^{d-1} \times \R^ d}  M_\beta(v_1) \, \big((v-v_1) \cdot \nu\big)_+  \, d\nu dv_1 \, .
$$
The proof of the next result   consists in  noticing the decomposition $\mathcal{L} = a _\beta(v) \, \text{Id} - \mathcal{K}$, where~$\text{Id}$ stands for the identity and  $ \mathcal{K}$ is a compact operator.
\begin{lem}
\label{lem : operateur L}
The operator $\mathcal{L}$ is a Fredholm operator of domain $L^2(\R^d, a_\beta M_\beta dv)$ and its kernel reduces to the constant functions.   
In particular, $\mathcal{L}$ is invertible on the set of functions
$$\displaystyle\Big \{  g \in L^2(\R^d, a_\beta M_\beta dv), \,  \int_{\R^d} g(v) \, M_\beta(v) dv = 0 \Big\}\, .$$
\end{lem}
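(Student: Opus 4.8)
The plan is to carry out, in our setting, the classical Fredholm analysis of the collision operator going back to Hilbert and Grad; all of the analytic input is available in \cite{hilbert2}, so I would only explain how the pieces fit together. Throughout I regard $\cL$ as an unbounded self-adjoint operator on $H := L^2(\R^d, M_\beta\,dv)$ with (form) domain $L^2(\R^d, a_\beta M_\beta\,dv)$, and I write $v' := v + \big(\nu\cdot(v_1-v)\big)\nu$ for the post-collisional velocity.

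\emph{Step 1: splitting off the collision frequency.} Carrying out the $\nu$-integration in the definition of $\cL$ and separating the loss part from the gain part, one writes $\cL = a_\beta(v)\,\mathrm{Id} - \mathcal{K}$, where $a_\beta$ is the collision frequency introduced before the statement and $\mathcal{K}\varphi(v) := \iint \varphi(v')\,M_\beta(v_1)\,b(v-v_1,\nu)\,dv_1\,d\nu$ is the gain operator. Since $\int_{\mathbf S^{d-1}}\big((v-v_1)\cdot\nu\big)_+\,d\nu$ is a fixed constant times $|v-v_1|$, the function $a_\beta$ is smooth, bounded below by a positive constant, and grows linearly at infinity; in particular multiplication by $a_\beta$ is an isomorphism of $L^2(\R^d, a_\beta M_\beta\,dv)$ onto $L^2(\R^d, a_\beta^{-1} M_\beta\,dv)$, hence a Fredholm operator of index $0$.

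\emph{Step 2: compactness of the gain operator.} After the change of variables turning $\mathcal{K}$ into a genuine integral operator $\mathcal{K}\varphi(v)=\int k_\beta(v,w)\,\varphi(w)\,dw$, one estimates the Grad kernel $k_\beta$ — this is the single computational point, and it is exactly Grad's lemma in the form recalled in \cite{hilbert2}. It yields that $\mathcal{K}$ is compact from $L^2(\R^d, a_\beta M_\beta\,dv)$ into $L^2(\R^d, M_\beta\,dv)$, equivalently that $(a_\beta\,\mathrm{Id})^{-1}\mathcal{K}$ is compact on $L^2(\R^d, a_\beta M_\beta\,dv)$. I expect this to be the only genuine obstacle: it is where the (mild) singularity and the linear growth of the hard-sphere cross-section, together with the Gaussian tails of $M_\beta$, are used. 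Granting it, the factorization $\cL = (a_\beta\,\mathrm{Id})\big(\mathrm{Id} - (a_\beta\,\mathrm{Id})^{-1}\mathcal{K}\big)$ exhibits $\cL$ as the product of an isomorphism with an operator of the form identity-plus-compact, so by the analytic Fredholm alternative $\cL$ is a Fredholm operator of index $0$.

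\emph{Step 3: kernel and range.} Symmetrizing the angular kernel (replacing $\big((v-v_1)\cdot\nu\big)_+$ by $\tfrac12|(v-v_1)\cdot\nu|$, which does not change $\cL$ because $v'$ is invariant under $\nu\mapsto-\nu$) and using the unit-Jacobian involution $(v,v_1)\mapsto(v',v_1')$ at fixed $\nu$, which preserves $|v|^2+|v_1|^2$, a standard manipulation gives, for $\varphi$ in the domain,
$$
\ip{\cL\varphi,\varphi}_{H} = \tfrac14\iint \big(\varphi(v)-\varphi(v')\big)^2\, M_\beta(v)\,M_\beta(v_1)\,\big|(v-v_1)\cdot\nu\big|\,dv\,dv_1\,d\nu \ \ge\ 0 .
$$
Hence $\cL$ is symmetric and nonnegative, and $\varphi\in\Ker\cL$ forces $\varphi(v)=\varphi(v')$ for a.e.\ $(v,v_1,\nu)$; since, for a.e.\ $v$, the velocity $v'$ covers all of $\R^d$ with a positive density as $(v_1,\nu)$ varies over the region where the kernel does not vanish, this forces $\varphi$ to be a.e.\ constant, so $\Ker\cL$ reduces to the constant functions. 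Finally, being Fredholm of index $0$ and self-adjoint, $\cL$ has range $(\Ker\cL)^{\perp} = \big\{\,g\in H : \int_{\R^d} g(v)\,M_\beta(v)\,dv = 0\,\big\}$, and it is injective on this subspace (whose intersection with $\Ker\cL$ is $\{0\}$); therefore $\cL$ restricts to a bijection of $\big\{g : \int g\,M_\beta\,dv = 0\big\}$ onto itself, which legitimizes the pseudo-inverse $\cL^{-1}$ used in the definition of $\kappa_\beta$. Steps 1 and 3 are soft; the analytic heart of the proof is the compactness of $\mathcal{K}$ in Step 2.
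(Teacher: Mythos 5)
Your proposal follows essentially the same route as the paper: the paper's proof consists precisely in the decomposition $\cL = a_\beta(v)\,\mathrm{Id} - \mathcal{K}$ with $\mathcal{K}$ compact, citing the classical analysis of \cite{hilbert2} (Hilbert/Grad) for the compactness, which is your Step 2, and your Steps 1 and 3 (the weighted isomorphism given by multiplication by $a_\beta$, the nonnegative Dirichlet form identifying the kernel with the constants, and the Fredholm alternative giving invertibility on $\big\{ g : \int_{\R^d} g\, M_\beta\, dv = 0\big\}$) are the standard soft complements the paper leaves implicit. So the argument is correct and matches the paper's approach, with more detail spelled out than in the original.
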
 
\noindent Note that the first equation in~(\ref{eqhilbert}) therefore reflects the fact that~$\widetilde\rho_{  0} $ does not depend on~$v$.

\smallskip
\noindent 
We define the vector $b(v) = \big( b_k (v) \big)_{k \leq d}$ with~$ \displaystyle \int_{\R^d} b(v) \, M_\beta(v) dv = 0$, by 
\begin{equation}
\label{eq: vecteur b}
\mathcal{L} b (v) = v .
%\mathcal{L} b_k (v) = v_k \, .
\end{equation}
Returning to~(\ref{eqhilbert}),  we have
$$
\widetilde\rho_{ 1} (\tau,x, v)  =  \rho_{ 1} (\tau,x, v)  + \overline \rho_{ 1} (\tau,x) \, , 
$$
with
$$
 \rho_{ 1} (\tau,x, v) := - b(v) \cdot \nabla_x \widetilde\rho_{  0}  (\tau,x) \quad \mbox{and} \quad   \overline \rho_{ 1}  \in \mbox{Ker} \,  {\mathcal L} \, .
$$
Next we consider the last equation in~(\ref{eqhilbert}) and we notice that for~$\widetilde\rho_{ 2} $ to exist it is necessary for~$ \partial_\tau \widetilde\rho_{  0}  +    v \cdot \nabla_x\widetilde\rho_{ 1} $ to belong to the range of~$\mathcal L$. Since~$\widetilde\rho_{  0} $ does not depend on~$v$, this means that
\begin{equation}\label{eqonrho}
\partial_\tau\widetilde \rho_{  0}  + \int_{\R^d} v \cdot \nabla_x\widetilde \rho_{ 1} (\tau,x, v) M_\beta (v) \, dv = 0Â \, .
\end{equation}
We then define  the diffusion matrix $D(v) = \big( D_{k,\ell} (v) \big)_{k, \ell \leq d}$, again with~$\displaystyle  \int_{\R^d}   D_{k,\ell} (v) \, M_\beta(v) dv =0$,
 by
\begin{equation}
\label{eq: matrix D}
\mathcal{L} D (v) := v \otimes  b (v) - \int_{\R^d}  v \otimes  b (v) \, M_\beta(v) dv  \, .
\end{equation}
 From the symmetry of the model, one can check (see \cite{desvillettes-golse} for instance) that there is a function~$\gamma$ such that 
$$
b (v)= \gamma (|v|) v \, .
$$ 
Then   an easy computation  shows that~$\widetilde\rho_{  0} = \rho $ where
$$
\partial_\tau \rho - \kappa_\beta \Delta_x \rho  = 0 \, , 
$$
while the diffusion coefficient is given by 
\begin{equation}
\label{kappa-def}
\kappa_\beta: = \frac{1}{d}  \int_{\R^d}  v \mathcal{L}^{-1} v \; M_\beta(v) dv 
= \frac{1}{d} \int_{\R^d}  \gamma (|v|) |v|^2 \, M_\beta(v) dv \, ,
\end{equation}
and where we used the symmetry of $b$ to derive the last equality. Finally we have
$$
\widetilde\rho_{ 2} (\tau,x, v)  =  \rho_{ 2} (\tau,x, v)  + \overline \rho_{ 2} (\tau,x) - b(v) \cdot \nabla_x \overline \rho_1(\tau,x)\, , 
$$
with
$$
 \rho_{ 2} (\tau,x, v) := D(v) :  {\rm Hess} \,  \rho (\tau,x)  \quad \mbox{and}Â \quad  \overline \rho_{ 2}   \in \mbox{Ker} \,  {\mathcal L} \, .
$$
 
\subsubsection{Proof of the convergence} \label{proodoftheconvergence}
Now let us prove~(\ref{limitfN1toheat2}).
With the notation introduced in the previous paragraph, let us define
\begin{equation}\label{defPsialpha}
\Psi_\alpha (\tau , x, v):= \rho(\tau,x) + \frac1\alpha  \rho_{  1} (\tau, x,v) +   \frac1{\alpha^2}  \rho_{  2} (\tau, x,v)  \, .
\end{equation}
Then~$\Psi_\alpha$ is almost a solution of \eqref{eq: widetildephialpha}: by construction one has
\begin{equation*}
\d_\tau \Psi_\alpha +\alpha \;  v\cdot \nabla_x \Psi_\alpha + \alpha^2 \; \mathcal{L} \Psi_\alpha = S_\alpha \, ,
\end{equation*}
where the error term $S_\alpha$ is given by
\begin{equation}
\label{eq: erreur}
S_\alpha(\tau,x, v)
: = \frac{1}{\alpha} \big(  \partial_{\tau}\rho_{ 1} (\tau,x, v)
+  v\cdot \nabla_x \rho_{ 2} (\tau,y, v)
+ \frac{1}{\alpha} \partial_{\tau} \rho_{ 2} (\tau,y, v)
 \big)
\, .
\end{equation}
 Defining
 $$
R_\alpha (\tau,x,v):= \Psi_\alpha (\tau,x,v) -  \widetilde \varphi_\alpha (\tau, x,v) 
 $$
 we have thanks to~(\ref{eq: widetildephialpha})
 $$
 \d_\tau R_\alpha +\alpha \;  v\cdot \nabla_x R_\alpha + \alpha^2 \; \mathcal{L} R_\alpha = S_\alpha
 $$
 and the result~(\ref{limitfN1toheat2}) then follows  from the maximum principle which states that
 $$
 \|M_\beta R_\alpha  \|_{L^\infty( [0,T] \times  \T^d \times \R^d)} \leq C(T) \big( \|M_\beta R_\alpha (0 )\|_{L^\infty(    \T^d \times \R^d)}  +   \|M_\beta S_\alpha \|_{L^\infty( [0,T] \times  \T^d \times \R^d)}\big) \, .
 $$
 We note that $S_\alpha$ involves spatial derivatives of $\rho$ of order at most 4,
thus from the maximum principle for the heat equation, each term of $M_\beta S_\alpha$ is bounded in $L^\infty$ norm by~$\alpha^{-1}$.  The same clearly holds for the initial data~$M_\beta R_\alpha (0,x,v)$ since
$$
\begin{aligned}
 R_\alpha (0,x,v) &=   \Psi_\alpha (0,x,v) -  \widetilde \varphi_\alpha (0, x,v) 
 =  \frac1\alpha  \rho_{  1} (0, x,v) +   \frac1{\alpha^2}  \rho_{  2} (0, x,v)  \, .
\end{aligned}
$$
It follows that
 $$
  \|M_\beta (\Psi_\alpha -   \widetilde \varphi_\alpha )\|_{L^\infty( [0,T] \times  \T^d \times \R^d)} \leq \frac{C(T)}{ \alpha}
 $$
and thanks to~(\ref{defPsialpha}),  the convergence result~(\ref{limitfN1toheat2})  is proved.

\begin{Rmk}\label{rem-truc}
We have considered here the case when~$\rho^0 = \rho^0 (x)$. In the case of ill-prepared initial data, namely if~$\rho^0 = \varphi^0 (x,v)$, then the same analysis works provided the following ansatz is used
$$
\Psi_\alpha (\tau , x, v):= \rho(\tau,x) + \frac1\alpha  \rho_{  1} (\tau, x,v) +   \frac1{\alpha^2}  \rho_{  2} (\tau, x,v) + \Pi^\perp \left( e^{-\alpha\tau {\mathcal L}} \varphi^0\right) \, ,
$$
where~$ \Pi^\perp $ is the orthogonal projector onto~$(\rm{Ker}\, {\mathcal L} ) ^\perp$.
\end{Rmk}

\subsection{Convergence to the brownian motion}

Let us denote the tagged particle by
$$
\brown(\tau):= x_1(\alpha \tau) \, .
$$
In the following, $\mathbb{E}_N, \mathbb{P}_N$ will refer to its expectation and probability with respect to the initial data sampled from the density $f_N^0$.
To prove the convergence of the  tagged particle 
to a brownian motion, one needs to check (see  \cite{billingsley}, Chapter 2)
\begin{itemize}
\item the convergence of the marginals of the tagged particle sampled at different times 
\begin{equation}
\label{eq: finite dim x1}
\lim_{N \to \infty} \mathbb{E}_N  \Big(  h_1 \big( \brown ( \tau_1) \big) \dots  
h_\ell \big(  \brown (  \tau_\ell) \big)
 \Big) 
=
\mathbb{E}  \Big(  h_1 \big(  B( \tau_1) \big) \dots  h_\ell \big(  B ( \tau_\ell) \big)  \Big) \, ,
\end{equation}
where $\{ h_1, \dots, h_\ell \}$ is a collection of continuous functions in $\T^d$.
Notice that these marginals refer to time averages and not to the number of particles.
\item the tightness of the sequence, that is for any $\tau \in [0,T]$ 
\begin{equation}
\label{eq: tightness criterion}
\forall \petit >0, \qquad 
\lim_{\eta \to 0} \lim_{N \to \infty}
\mathbb P_N \left( \sup_{ \tau< \sigma< \tau+\eta}  \big| \brown(\sigma) - \brown(\tau)  \big| \geq \petit 
\right) = 0 \, .
\end{equation}
\end{itemize}	
Note that \eqref{eq: finite dim x1} requires to understand time correlations and thus 
we are going to adapt Theorem \ref{long-time} to this new framework.

\medskip

\noindent
{\bf Step 1. Finite dimensional marginals.} $ $  
First, we are going to rewrite the time correlations in terms of collision trees.
A similar approach was devised in Lebowitz, Spohn \cite{LS2} to derive an information on  the true particle trajectories (in the physical space) from the Duhamel series.
Let $t_1 < \dots < t_\ell$ be an increasing collection of times and~$H_\ell = \{ h_1,\dots,h_\ell \}$ a collection of $\ell$ smooth functions.
Define the biased distribution  at time $t >t_\ell$ as follows 
\begin{align}
\label{eq: tilted measure}
\int_{\T ^{Nd} \times \R^{N d}} d Z_N
f_{N,H_\ell} (t, Z_N) & \Phi (Z_N)
: = \mathbb{E}_N  \Big(  h_1 \big( x_1(t_1) \big) \dots  h_\ell \big( x_1(t_\ell) \big)
\, \Phi \big( Z_N (t) \big)  \Big) \\
&= 
\int_{\T ^{Nd} \times \R^{N d}} dZ_N \, f^0_N (Z_N) \; h_1 \big( x_1(t_1) \big) \dots  h_\ell \big( x_1(t_\ell) \big)
\, \Phi \big( Z_N (t) \big)  ,
\nonumber
\end{align}
for any test function $\Phi$.
We stress that by construction the biased distribution $f_{N,H_\ell} (t, Z_N)$
\begin{itemize}
\item  is in general no longer normalized by 1
\item   is symmetric with respect to the $N-1$ last variables.
\end{itemize}
The corresponding marginals are
\begin{equation}
\label{eq: modified distribution}
f^{(s)}_{N,H_\ell} (t, Z_s) := \int f_{N,H_\ell} (t, Z_N) \, dz_{s+1} \dots dz_N \, .
\end{equation}
By construction $f_{N,H_\ell}$ satisfies the Liouville equation for $t >t_\ell$ and the marginals
$f^{(s)}_{N,H_\ell}$ obey the BBGKY hierarchy \eqref{eq: BBGKY} for $s< N$.
Applying the iterated Duhamel formula \eqref{iterated-Duhamel}, we get
\begin{equation}
\label{eq: fHk}
f^{(s)}_{N,H_\ell} (t) =  \sum_{m=0}^{N-s} Q_{s,s+m} (t - t_\ell) f^{(s+m)}_{N,H_\ell} (t_\ell) \, .
\end{equation}
By construction $f_{N,H_\ell} (t_\ell,Z_N) = f_{N,H_{\ell -1}} (t_\ell,Z_N) h_\ell(z_1)$, where the new distribution is now modified by the the first $\ell-1$ functions. 
This procedure can  be iterated up to the initial time.
The backward dynamics can be understood in terms of collision trees which are now weighted by the factor 
$h_1 \big( x_1(t_1) \big) \dots  h_\ell \big( x_1(t_\ell) \big)$ associated with the motion of the tagged particle
\begin{equation}\label{eq: fHk 2}
\begin{aligned}
f^{(1)}_{N,H_\ell} (t) & =  \sum_{m_1 + \dots+ m_\ell =0}^{N-1} Q_{1,1+ m_1} (t - t_\ell) 
\Big( h_\ell Q_{1+ m_1,1+m_2} (t_\ell - t_{\ell-1} ) \Big( h_{\ell -1} \dots    \\
& \qquad \qquad 
Q_{1+ m_1+\dots + m_{\ell-1} ,1+ m_1+ \dots + m_\ell} (t_1) \Big) f^{(1 + m_1+ \dots + m_\ell)}_N (0) \, .
\end{aligned}
\end{equation}
This identity holds for any $N$ and any time.

\medskip

\noindent In order to check \eqref{eq: tightness criterion}, we need also to generalize the identity to consider correlations of the form 
\begin{equation}
\label{eq: temps differents}
\mathbb{E}_N  \big(  h\big( x_1(t_1) - x_1(s) \big) \dots  h \big( x_1(t_\ell) - x_1(s) \big)   \big)
\end{equation}
for a smooth function $h$ with $s < t_1 < \dots < t_\ell$. Using a partition of unity $\{\Gamma_i^\petit \}$ centered at points $\gamma_i \in \T^d$ with mesh $\petit$, one can approximate $h$
$$
h(x -y) = \sum_{i,j} h(\gamma_i - \gamma_j) \Gamma_j^\petit (x)  \Gamma_i^\petit (y) + O(\petit ) \, .
$$
This allows us to use the identity \eqref{eq: fHk 2}
for any accuracy $ \petit >0$ of the approximation. 
Thus~\eqref{eq: temps differents} can be computed in terms of collision trees which are now weighted by the factor~$h\big( x_1(t_1) - x_1(s) \big) \dots  h \big( x_1(t_\ell) - x_1(s) \big)$.

\medskip

\noindent
{\bf Step 2. The limit process.} $ $ 

\noindent
In the Boltzmann Grad limit, the memory of the system is lost and the tagged particle behavior becomes equivalent to a Markov process. We define 
\begin{equation}
\label{eq: makov chain}
\displaystyle \bar x_1(t) =   \bar  x_1(0) + \int_0^t   \bar  v_1(s) \, ds
\end{equation}
as an additive functional of the 
Markov chain $\{ \bar  v_1 (s) \}_{s \geq 0}$ with generator $\alpha \mathcal{L}$ introduced in~\eqref{linear-boltzmann}.
Initially $( \bar  x_1(0), \bar  v_1(0) )$  is distributed according to $\rho^0 (x) M_\beta (v)$. The  expectation associated to this Markov chain is denoted by $\mathbb{E}_{M_\beta}$.

\begin{figure}[h] %  figure placement: here, top, bottom, or page
   \centering
  \includegraphics[width=3in]{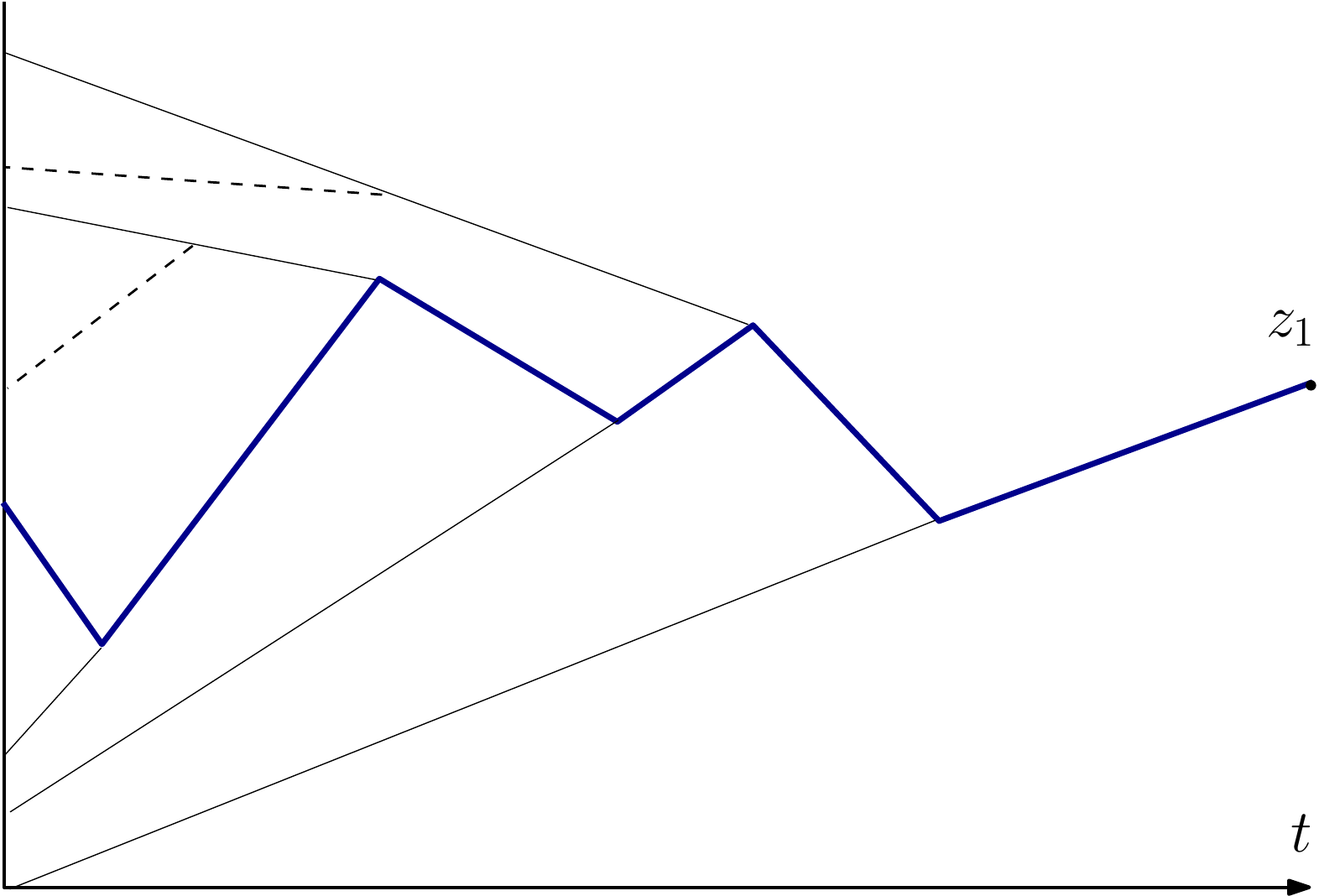} 
\caption{\small A collision tree for the Boltzmann hierarchy is depicted. The path of $z_1$ is the backbone of the tree with branchings at each new collision. 
There cannot be further branches as any collision with a new particle would lead to a cancellation in the collision operator. Thus the trees involving the branches in dashed line do not contribute to the Duhamel series in the Boltzmann hierarchy.
}
\label{fig: arbre markov}
 \end{figure}

Let $t_1 < \dots < t_\ell$ be an increasing collection of times and~$H_\ell = \{ h_1,\dots,h_\ell \}$ a collection of $\ell$ smooth functions. As in \eqref{eq: tilted measure}, we define~$g_{\alpha,H_\ell} (t)$ as the biased  distribution  of the Markov chain $\bar z_1(t)= \big( \bar x_1(t), \bar v_1(t)  \big)$  
\begin{equation*}
\forall t \in [t_k, t_{k+1}[, \qquad 
\int_{\T^{d} \times \R^d }   g_{\alpha,H_\ell} (t,  z) \, \Phi ( z) \, dz
=
\mathbb{E}_{M_\beta}  \Big(  h_1 \big( \bar  x_1 (t_1) \big) \dots  h_\ell \big( \bar  x_1(t_\ell) \big)
\, \Phi \big( \bar z_1 (t) \big)  \Big) 
\, ,
\end{equation*}
with $t_{\ell+1} = \infty$. 
One can consider a measure (cf. \eqref{solutionBoltzmannlinear}) including as well the background density of an ideal gas.
The marginals of this measure are
\begin{equation}
   g_{\alpha,H_\ell}^{(s)} (t,Z_s) = g_{\alpha,H_\ell} (t,z_1) \displaystyle \prod_{i = 2}^s M_\beta(v_i)  \, .
\end{equation}
As in \eqref{eq: fHk 2}, the distribution can be rewritten in terms of a Duhamel series
\begin{equation}
\label{eq: fHk 2 0}
\begin{aligned}
g_{\alpha,H_\ell} (t) & =  \sum_{m_1 + \dots+ m_\ell =0}^{\infty} Q^0_{1,1+ m_1} (t - t_\ell) 
\Big( h_\ell Q^0_{1+ m_1,1+m_2} (t_\ell - t_{\ell-1} ) \Big( h_{\ell -1} \dots    \\
& \qquad \qquad 
Q^0_{1+ m_1+\dots + m_{\ell-1} ,1+ m_1+ \dots + m_\ell} (t_1) \Big) g_{\alpha,H_\ell}^{(1 +m_1+ \dots + m_\ell)} (0) \, .
\end{aligned}
\end{equation}
This representation allows us to rephrase the Markov chain expectations in terms of the Boltzmann hierarchy. 
In this series, a lot of cancelations occur (see figure \ref{fig: arbre markov}). 
Indeed, the only relevant collision trees are made of a single  backbone (the trajectory of $z^0_1$) with branches representing the collisions of $z^0_1$ with the ideal gas, but no further ramification.
In step 3, we shall not use these cancellations and simply compare the series \eqref{eq: fHk 2} and \eqref{eq: fHk 2 0} term by term in order to show that in the Boltzmann Grad limit when $\alpha \ll \sqrt{\log \log N}$
\begin{equation}
\label{eq: finite dim esperance}
\lim_{\alpha \to \infty} 
\mathbb{E}_{M_\beta}  \Big(  h_1 \big( \bar x_1( \alpha  \tau_1) \big) \dots  
h_\ell \big(  \bar x_1 (\alpha \tau_\ell) \big)
 \Big) 
 - 
 \mathbb{E}_N  \Big(  h_1 \big(  x_1( \alpha  \tau_1) \big) \dots  
h_\ell \big(  x_1 (\alpha \tau_\ell) \big)
 \Big) 
= 0 .
\end{equation}

\noindent As~$\mathcal{L}$ has a spectral gap, the invariance principle holds for the position of the Markov process $\bar x_1$   
(see \cite{KLO}  Theorem 2.32 page 74).
This implies the convergence of the rescaled finite dimensional marginals
towards the ones of the brownian motion $B$ with variance $\kappa_\beta$ (see \eqref{kappa-def}), i.e.
that for any smooth functions $\{h_1,\dots , h_\ell\} $ defined in~$\T^d$, 
\begin{equation}
\label{eq: finite dim x01}
\lim_{\alpha \to \infty} 
\mathbb{E}_{M_\beta}   \Big(  h_1 \big( \bar x_1( \alpha  \tau_1) \big) \dots  
h_\ell \big(  \bar x_1 (\alpha \tau_\ell) \big)
 \Big) 
=
\mathbb{E}  \Big(  h_1 \big(  B( \tau_1) \big) \dots  h_\ell \big(  B ( \tau_\ell) \big)  \Big) \, .
\end{equation}
The diffusion coefficient $\kappa_\beta$ defined in \eqref{kappa-def} can be interpreted in terms of  
the variance of the position properly rescaled in time (see \cite{KLO} page 47).
%: the Maxwellian distribution $M_\beta$ is the invariant measure of this process and 
%the diffusion coefficient $\kappa_\beta$  can be recovered as the variance of the position for any coordinate
%\begin{equation*}
%\kappa_\beta =
%% \lim_{\alpha \to \infty} \mathbb{E}_{M_\beta}  
%%\left[ 
%%\left( \frac{1}{ \sqrt{\alpha^2 \tau}} \int_0^{\alpha \tau} \bar v_{1, k}(s) \, ds
%%\right)^2
%%\right] =  
%\frac{1}{d} \mathbb{E}_{M_\beta}  
%\left[ \bar v_{1} \, {\mathcal L}^{-1}  \bar v_{1}  \right]
%\, .
%\end{equation*}
%Note that the normalization of the variance comes from the fact that the generator of the process is $\alpha \cL$, i.e. that the time scale is speeded-up by $\alpha$.

\medskip

\noindent
{\bf Step 3. Approximation of the finite dimensional marginals.} 

\noindent We turn now to the proof of \eqref{eq: finite dim esperance} which combined with \eqref{eq: finite dim x01} will 
show the convergence  \eqref{eq: finite dim x1} of the marginals of the tagged particle sampled at different times.

\medskip

\noindent Suppose now that the collection $H_\ell$ satisfies the uniform bounds on $\T^d $
\begin{equation}
\label{eq: bornes H}
\forall i \leq \ell, \qquad 
0 \leq h_i(x_1) \leq m .
% \, , \qquad \big| \nabla_{x_1} h_i(x_1) \big| \leq  \delta^\prime\, .
\end{equation}
Thus the $f^{(s)}_{N,H_\ell}$ %defined in \eqref{eq: modified distribution} 
satisfy the maximum principle \eqref{upper-bound} with an extra factor $m^{\ell}$.  The pruning procedure  on the collision trees therefore applies also in this case and enables us to restrict to trees with at most $A^{K+1}$ collisions during the time interval~$[0,t]$.
Furthermore, the comparison of the trajectories for  $f^{(1)}_{N,H_\ell}$ and $g_{\alpha,H_\ell}$
can be achieved in the same way as before on a tree with less than $A^{K+1}$ collisions  and no recollisions. 
Analogous bounds as in Proposition~\ref{lastapprox250000} can be obtained, but one has to take into account that the trees are now weighted by $ h_1 \big(  x^0_1(\alpha   \tau_1) \big) \dots  h_\ell \big( x^0_1 ( \alpha   \tau_\ell) \big)$.
We recall that the pseudo-trajectories of $x_1$ and $x^0_1$ coincide at any time,
thus  bounds similar to \eqref{eq: approx temps gd} hold  
 \begin{equation}
\label{eq: comparaison x01}
\big\| f_{N, H_\ell}^{(1)} (\alpha \tau,   y, v) - 
g_{\alpha, H_\ell} (\alpha \tau,   y, v) \big\|_{L^\infty(  \T^d \times \R^d)} 
\leq  C   m^\ell \, \| \rho^0 \|_\infty \, 
\left[ \frac{ \tau \alpha^2}{ ({\log\log N})^{\frac{A-1}{A} } } \right]^{\frac{A^2}{A-1}} \, \cdotp
\end{equation}
This implies \eqref{eq: finite dim esperance}, hence \eqref{eq: finite dim x1} thanks to 
\eqref{eq: finite dim x01}.

%Combining  \eqref{eq: comparaison x01} and \eqref{eq: finite dim x01}, we deduce directly the convergence of the finite dimensional marginals~\eqref{eq: finite dim x1}.

\medskip

\noindent
{\bf Step 4. Tightness.}

\noindent In order to evaluate \eqref{eq: tightness criterion}, it is enough to sample the trajectory of the tagged particle at the times $\tau_i = \{ \tau + \frac{u_N}{\alpha} \,  i  \}_{i \leq \ell_N}$ for  $u_N = \frac{1}{\log N}$ and with $\ell_N :=  \alpha \eta / u_N $. 
Indeed, we can decompose the path deviations into two terms
\begin{align}
\label{eq: decomposition temps}
\mathbb P_N \left( \sup_{ \tau< \sigma< \tau+\eta}  \big| \brown(\sigma) - \brown(\tau)  \big| \geq 2 \petit 
\right) \leq  & 
1 - \mathbb P_N \left( \bigcap_{i = 1}^{\ell_N} \;  \big\{   \big| \brown( \tau_i) - \brown(\tau)  \big| < \petit \big\}  \right)\\
& \qquad + \sum_{i = 1}^{\ell_N} 
\mathbb P_N \left( \sup_{ \tau_i < \sigma < \tau_{i+1}}  \big| \brown(\sigma) - \brown( \tau_i)  \big| \geq  \petit 
\right) \, . \nonumber
\end{align}
	
\medskip

\noindent We shall first evaluate the last term in the right-hand side which involves only events occurring in a microscopic time scale of length $u_N$. Given $i \leq \ell_N$, let $t_i := i u_N + \alpha\tau$ and 
$t_{i+1} := u_N + t_i$ then
\begin{equation}
\mathbb P_N \left( \sup_{ \tau_i < \sigma < \tau_{i+1}}  \big| \brown(\sigma) - \brown( \tau_i)  \big| \geq  \petit \right) 
= 
\mathbb P_N \left( \sup_{ t_i < s < t_{i+1}}  \big| x_1 (s) - x_1 (t_i) \big| 
\geq   \petit \right) 
%= \mathbb P \left( \sup_{ t_i < s < t_{i+1}}  \big| \int_{t_i}^{t_{i+1}} v_1 (s) ds  \big| 
%\geq   \petit \right)
. \nonumber
\end{equation}
%Let $v^* = \max \{ |v_i|, \quad i \leq N\}$ be the modulus of the maximal velocity in the system.
In order to control the tagged particle fluctuations, it is enough to bound its velocity  in the time interval $[t_i, t_{i+1}]$
\begin{align*}
%\mathbb P \left( \sup_{ t_i < s < t_{i+1}}  \big| x_1 (s) - x_1 (t_i) \big| 
%\geq   \petit \right) = 
\mathbb P_N \left( \sup_{ t_i < s < t_{i+1}}  \big| \int_{t_i}^s v_1 (s') ds'  \big| 
\geq   \petit \right)
& \leq 
\mathbb P_N \left( \int_{t_i}^{ t_{i+1} }  \big| v_1 (s')  \big|  ds' \geq   \petit \right)\\
& \leq \| \rho^0 \|_\infty \, 
\widehat{\mathbb P}_N \left( \int_0^{u_N } \big| v_1 (s')  \big|    ds' 
\geq   \petit \right)
, 
\end{align*}
where we used the  maximum principle in the last inequality and $\widehat {\mathbb P}_N$ denotes the dynamics starting from the invariant measure $M_{N, \beta}$.
Following the strategy in \cite{alexander} to bound this probability, we write
\begin{align*}
\widehat{\mathbb P}_N \left( \int_0^{u_N }  \big| v_1 (s')  \big|    ds' 
\geq   \petit \right)
\leq \exp \left( - \frac{\petit}{u_N } \right)
\widehat{\mathbb E}_N \left( \exp \left( \frac{1}{u_N }  \int_0^{u_N } \big| v_1 (s')  \big|   ds'  \right) \right).
\end{align*}
Using Jensen's inequality and the invariant measure, we get
\begin{align*}
\widehat{\mathbb E}_N \left( \exp \left( \frac{1}{u_N }  \int_0^{u_N }  \big| v_1 (s')  \big|    ds'  \right) \right)
\leq 
\frac{1}{u_N }  \int_0^{u_N }  ds' \,
\widehat{\mathbb E}_N \left( \exp \left(   \big| v_1 (s')  \big|     \right) \right)
= \mathbb E_{M_{N,\beta}} \left( \exp \left(   \big| v_1  \big|  \right) \right)  \leq c_\beta,
\end{align*}
where $c_\beta$ is a constant depending only on $\beta$.
Since $u_N = \frac{1}{\log N}$, we have shown that for any~$\petit>0$, the probability of a deviation in a very short time vanishes when $N$ goes to infinity
\begin{align*}
\sum_{i =1}^{\ell_N}
\mathbb P_N \left( \sup_{ \tau_i < \sigma < \tau_{i+1}}  \big| \brown(\sigma) - \brown( \tau_i)  \big| \geq  \petit \right)
\leq  c_\beta \, \ell_N \, \exp \left( - \petit \, \log N \right)
\xrightarrow[N \to \infty ]{} 0.
\end{align*}

\bigskip

\noindent The tightness for the process $\bar x_1$ derived in \cite{KLO}  (Theorem 2.32 page 74) implies that for any~$\petit >0$ and $\ell_N =  \alpha \eta / u_N $
\begin{align*}
\lim_{\eta \to 0} 
\lim_{N \to \infty} 
\mathbb P_{M_\beta} \left( \bigcap_{i = 1}^{\ell_N} \;  \big\{   \big| \bar  x_1(\alpha  \tau_i) - \bar x_1(\alpha  \tau)  \big| < \petit  / 2 \; \big\}  \right)
= 1 \, . 
\end{align*}
By comparison, we are going to show that the same result holds also for the tagged particle $x_1$.
Using \eqref{eq: decomposition temps}, this will complete the proof of \eqref{eq: tightness criterion}.

Let $h_{ \petit} (w)= 1_{ \{ |w| \leq \petit /2 \}}$, then it is enough to show that 
\begin{equation}
\label{eq: comparaison x02}
\begin{aligned}
& \Big | \mathbb{E}_N  \left(  \prod_{i = 1}^{\ell_N} h_{ \petit}  \big( x_1 (\alpha\tau_i) - 
x_1 (\alpha \tau) \big)   \right)
-
\mathbb{E}_{M_\beta}  \left(  \prod_{i = 1}^{\ell_N} h_{  \petit}  \big(\bar x_1(\alpha\tau_i) - 
\bar  x_1(\alpha \tau) \big)   \right) \Big| \\
& \qquad \qquad \qquad  \qquad \qquad \leq
C \| \rho^0 \|_\infty \;
\left[ \frac{ \tau \alpha^2}{ ({\log\log N})^{\frac{A-1}{A} } } \right]^{\frac{A^2}{A-1}} \cdotp
\end{aligned}
\end{equation}
At this stage, it is enough to use the fact that probabilities of the form \eqref{eq: temps differents} can also be evaluated in terms of weighted trees as in Step 1.
Since $h_{ \petit}$ is bounded by 1, the maximum principle applies uniformly in~${\ell_N}$.
The tree decomposition and the reduction to non pathological trajectories hold as in the previous proof. 
For good pseudo-trajectories, the paths of  $x_1$ and $x^0_1$ coincide, therefore modifying the Duhamel series by $\prod_{i = 1}^{\ell_N} h_{ \petit}  \big( x_1 (\alpha\tau_i) - 
x_1 (\alpha \tau) \big)  $ does not alter the comparison established in 
Proposition \ref{lastapprox250000}.
This concludes the proof of tightness.

%%%%%%%%%%%%%%%%%%%%%%%%%%%%%%%%%%%%%%%%%%%%%%%%
%%%%%%%%%%%%%%%%%%%%%%%%%%%%%%%%%%%%%%%%%%%%%%%%

\appendix

\bigskip\bigskip

\section{Asymptotic control of the exclusion}\label{proofpropappendix}

\noindent For the sake of completeness, we recall here the proof  of Proposition \ref{exclusion-prop}. We omit all subscripts~$\beta$ to simplify the presentation.

	\medskip
	\noindent
	$\bullet$ {\it First step: asymptotic behaviour of the partition function.}
	
	\smallskip
	\noindent
We first prove that in the scaling $N \e^{d-1}\ \equiv \alpha,$ with~$\alpha \ll 1/\varepsilon$,
\begin{equation}
\label{Z-est}
  1 \leq   \cZ_N^{-1}  \cZ_{N-s} \leq\big( 1 - \e \alpha \k_d \big)^{-s},
\end{equation}
 where  $\k_d$ denotes the volume of the unit ball in $\R^d.$ The first inequality is due to the immediate upper bound 
 $$
   \cZ_N \leq    \cZ_{N-s} \, .
 $$
Let us prove the second inequality. We have  by definition
$$
 \begin{aligned} 
    {\mathcal Z}_{s+1}  =  \int_{ \T^{d(s+1)} }     \Big(\prod_{1 \leq i \neq j \leq s+1} \indc_{|x_i-x_{j} |>\eps}\Big)   \, dX_{s+1} \, .
\end{aligned}
$$ 
By Fubini's equality, we deduce
 $$ \begin{aligned}   {\mathcal Z}_{s+1}  = \int_{ \T^{ds}  }  \left( \int_{ \T^{d}  } \Big(\prod_{1 \leq i \leq s} \indc_{|x_i-x_{s+1} |>\eps}\Big)  \, dx_{s+1}\right) \Big(\prod_{1 \leq i\neq j \leq s}   \indc_{|x_i-x_{j} |>\eps}\Big)  \, dX_{s} \, .\end{aligned}$$
Since
 $$ \int_{ \T^{d}  } \Big(\prod_{1 \leq i \leq s} \indc_{|x_i-x_{s+1} |>\eps}\Big)\, dx_{s+1}\geq 
1-  \k_d s \eps^d    \, ,$$
   we deduce   the lower bound
$$   {\mathcal Z}_{s+1} \geq
  {\mathcal Z}_{s}   (
1-  \k_d s \eps^d   ) \geq
  {\mathcal Z}_{s}   
(1-\k_d \eps  \alpha ) \, ,
$$ 
where we used $s \leq N$ and the scaling $N \e^{d-1} \equiv \alpha.$ 
This implies by induction
$$
  {\mathcal Z}_N  \geq   {\mathcal Z}_{N-s}   \big( 1 - \e \alpha \k_d  \big)^s \, .
$$
 That proves~(\ref{Z-est}).
%We have  the lower bound
%$$
%\begin{aligned}
% \bar \cZ_{s+1} &=   \int  \left( \frac\beta 2\right)^{(s+1)d/2}\exp (-\beta  H_{s+1} (Z_{s+1})) dZ_{s+1}= \int \exp\Big(-\beta \sum _{1\leq i<j\leq s+1} \Phi_\eps(x_i-x_j) \Big) dX_{s+1} \\
%& \geq  \int_{\T^{d(s+1)}}  \exp\Big(-\beta \sum _{1\leq i<j\leq s} \Phi_\eps(x_i-x_j) \Big)  \Big( \prod_{i=1}^s \indc_{|x_i-x_{s+1}| >\eps} \Big)    \, dX_{s+1} \, .
% \end{aligned}
%$$ 
%By Fubini, we have
% $$ \begin{aligned}  \int_{\T^{d(s+1)}}  &\exp\Big(-\beta \sum _{1\leq i<j\leq s} \Phi_\eps(x_i-x_j) \Big)  \Big( \prod_{i=1}^s \indc_{|x_i-x_{s+1}| >\eps} \Big)   \, dX_{s+1}  \\ & = \int_{\T^{ds}} \left( \int_{\T^{d}} \Big(\prod_{1 \leq i \leq s} \indc_{|x_i-x_{s+1} |>\eps}\Big)   dx_{s+1}\right)\exp\Big(-\beta \sum _{1\leq i<j\leq s} \Phi_\eps(x_i-x_j) \Big) dZ_s\\
% & \geq \bar \cZ_s (1-\k s \eps^d)\, ,\end{aligned}$$
%implying by induction
%$$\bar \cZ_N  \geq \bar \cZ_{N-s}  \prod_{j = N-s}^{N-1} (1 - j \eps^d \kappa_d )  \geq \bar \cZ_{N-s} \big( 1 - \eps \lambda^d \kappa_d \big)^s \, ,$$
%where we used $s \leq N$ and the scaling $N \eps^{d-1}\lambda^{-d} \equiv 1.$  

	\medskip

	\noindent
	$\bullet$ {\it Second step: convergence of the marginals.}

	\medskip

\noindent Let us introduce the short-hand notation
$$
dZ_{(s+1,N)}  :=dz_{s+1} \dots dz_N \, .
$$
We compute for $s \leq N$ 
$$  
 \begin{aligned} 
 M_N^{(s)} (Z_s)  &=   {\mathcal Z}_N^{-1} 
 \indc_{Z_s \in {\mathcal D}_\eps^s}\left(\frac{\beta }{2\pi}\right)^{ \frac{sd}2}\exp \left(-\frac\beta2 |V_s|^2\right)  
\\
  & \times \int_{  \R^{d(N-s)}} 
\left(\frac{\beta }{2\pi}\right)^{ \frac{(N-s)d}2} \exp \left(-\frac\beta2 \sum_{i=s+1}^N|v_i|^2\right)
  \, dV_{(s+1,N)}\\
 & \quad  \times  \int_{ \T^{d(N-s)} } \left(  \prod_{s+1 \leq i \neq j \leq N} \indc_{|x_i - x_j| > \eps} \right) \left(  \prod_{i '\leq s <  j'}  \indc_{|x_{i'} - x_{j'}| > \eps} \right)
dX_{(s+1,N)} \, .
  \end{aligned}$$
  We deduce, by   symmetry, 
\begin{equation} \label{dec:tensor}   
 M_N^{(s)} = {\mathcal Z}_N^{-1} \indc_{Z_s \in {\mathcal D}_\eps^s}M^{\otimes s}   \Big(  {\mathcal Z}_{N-s} -  {\mathcal Z}^\flat_{(s+1,N)} \Big)
\end{equation} 
 with the notation
$$
 \begin{aligned}  {\mathcal Z}^\flat_{(s+1,N)} & :=  \int_{\T^{d(N-s) } }
  \Big(1 - \prod_{i \leq s < j}\indc_{|x_i - x_j| > \eps}\Big)   \prod_{s+1 \leq k \neq \ell \leq N} \indc_{|x_k- x_\ell| > \eps}\, dX_{(s+1,N)} \, .\end{aligned}
 $$
\noindent From there, the difference $ \indc_{Z_s \in {\mathcal D}_\eps^s}M^{\otimes s} 
-   M_{N}^{(s)}$ decomposes as a sum
\begin{equation}
\label{marginal-dec1}
\begin{aligned} 
 \indc_{Z_s \in {\mathcal D}_\eps^s}M^{\otimes s} 
-   M_{N}^{(s)} = \Big(1 -   {\mathcal Z}_{N}^{-1}  {\mathcal Z}_{N-s}\Big) \indc_{Z_s \in {\mathcal D}_\eps^s}M^{\otimes s} \\ {}+     {\mathcal Z}_N^{-1}  {\mathcal Z}^\flat_{(s+1,N)} \indc_{Z_s \in {\mathcal D}_\eps^s}M^{\otimes s}
 \, .
\end{aligned}
\end{equation} 
By~(\ref{Z-est}), there holds $1-   {\mathcal Z}_{N}^{-1}  {\mathcal Z}_{N-s} \to 0$ as $N \to \infty,$ for fixed $s.$ Since $M^{\otimes s}$ is uniformly bounded,   this implies that the first term in the right-hand side of \eqref{marginal-dec1} tends to 0 as~$N $ goes to~$ \infty$. 
Besides, by
 $$\dsp{0 \leq  1 - \prod_{i \leq s < j}\indc_{|x_i - x_j| > \eps} \leq   \sum_{i \leq s < j} \indc_{|x_i-x_j|<\eps}\,,}$$
 we bound 
  $$ 
  \begin{aligned}
   {\mathcal Z}^\flat_{(s+1,N)}   \leq \sum_{1 \leq i \leq s}\int_{\T^{d(N-s)}} \Big(\sum_{s+1 \leq j \leq N} \indc_{|x_i-x_j|<\eps}\Big) \prod_{s+1 \leq  k \neq \ell\leq N} \indc_{|x_k - x_ \ell | > \eps}  \,  dX_{(s+1,N)}
\,   .
  \end{aligned}
$$
 Given $1 \leq i\leq s,$ there holds by symmetry  and Fubini's equality,
 $$ \begin{aligned}
 & \int_{\T^{d(N-s)}  }\Big(\sum_{s+1 \leq j \leq N} \indc_{|x_i-x_j|<\eps}\Big) \prod_{s+1 \leq  k \neq l \leq N} \indc_{|x_k - x_l| > \eps}\   dX_{(s+1,N)}\\   &\leq (N-s) \int_{\T^{d }  } \indc_{|x_i - x_{s+1}| < \e}  \,   dx_{s+1}   \, \int_{\T^{d(N-s-1)}  } \prod_{s+2\leq  k \neq l \leq N} \indc_{|x_k - x_l| > \eps}   \, dX_{(s+2,N)} \\ &  = (N-s) \int_{\T^{d }  } \indc_{|x_i - x_{s+1}| < \e} \,  dx_{s+1}  \, \times \,  {\mathcal Z}_{N-s-1} \, ,
 \end{aligned}$$
 so that
 \begin{equation} \label{bd:flat}  {\mathcal Z}^\flat_{(s+1,N)} \leq s (N-s) \e^d \k_d  {\mathcal Z}_{N-s-1} \, .
 \end{equation}
  By~(\ref{Z-est}), we obtain
$$ % \begin{equation} \label{bd:z-n-s-bis}
    {\mathcal Z}_N^{-1}  {\mathcal Z}^\flat_{(s+1,N)}  \leq \e  \alpha s \k_d \big(1 - \e \alpha\k_d \big)^{-(s+1)} \, ,
$$%  \end{equation}
  and the upper bound tends to 0 as $N \to \infty,$ for fixed $s.$ This implies convergence to 0 of the second term in the right-hand side of \eqref{marginal-dec1}.
This completes  the proof  of Proposition \ref{exclusion-prop}.
\qed

%%%%%%%%%%%%%%%%%%%%%%%%%%%%%%%%%%%%%%%%%%%%%%%%
%%%%%%%%%%%%%%%%%%%%%%%%%%%%%%%%%%%%%%%%%%%%%%%%
\bigskip
\bigskip
\section{Recollisions in the torus}

We show here how to adapt the arguments of \cite{GSRT} to prove Lemma \ref{geometric-lem1}.

\medskip\noindent
    $ \bullet $ To build the set of ``bad velocities", we use the correspondence between the torus and the whole space with periodic structure. Asking that there exists $u \in [0,t]$ such that
$$ d\big ((x_1-v_1 u ),(  x _2-v_2 u )\big) \leq \eps\,,$$
boils down to having
$$ (x_1-v_1 u )-(  x _2-v_2 u ) \in \bigcup_{k\in \Z^d} B_{\eps}(  k)\,.$$
Then, by the triangular inequality and provided that $\eps<\bar a$,
$$( x^0_1-v_1 u )-(   x^0_2-v_2 u ) \in \bigcup_{k\in \Z^d} B_{3\bar a}(k )  \,.$$
Now, since $|v_1-v_2|\leq 2E$ and $u  \in [0,t]$, this implies that
$$ s (v_1-v_2) \in \left( \bigcup_{k\in \Z^d} B_{3 \bar a}( x^0_1- x^0_2 +  k)\right) \cap B_{0}(2Et)\,.$$
In other words, $v_1-v_2$ has to belong to a finite union of cones of vertex 0
\begin{itemize}
\item at most one of which is of solid angle $ ( \bar a /\eps_0)^{d-1}$;
\item the other ones (at most $(4E t)^d$) are of solid angle~$   c \, \bar a^{d-1}$.
\end{itemize}
The intersection $K(\bar x_1-\bar x_2, \eps_0, \bar a) $ of these  cones and of the sphere of radius $2E$  is of size 
 $$|K(\bar x_1-\bar x_2, \eps_0, \bar a) | 
 \leq  
 CE^d \left(  \Big( \frac{\bar a}{\eps_0}\Big)^{d-1} + (E t )^d {\bar a}^{d-1}  \right) \,.$$

\medskip\noindent
 $ \bullet $  In order to prove the second estimate, we need to refine a little bit the previous argument. Asking that there exists $u \in [\delta,t]$ such that
$$ d((x_1-v_1 u ),(  x _2-v_2 u) )\leq \eps_0\,,$$
boils down to having
\begin{equation}
\label{k-cond}
 u  (v_1-v_2) \in  B_{3\eps_0}( x^0_1-  x^0_2+   k)\,,
\end{equation}
for some $k \in \Z^d \cap B_{2Et }( x^0_2 -  x^0_1)$.

\begin{itemize}
\item If $| x^0_1- x^0_2 +  k| \geq 1/4$,  condition (\ref{k-cond}) implies that $v_1-v_2$ belongs to the intersection of $B_{2E}(0)$ and some cone of vertex 0 and solid angle $\eps_0^{d-1}$.

\item
If $| x^0_1- x^0_2 +   k| \leq 1/4$ (which can happen only for one value of $k$),  denoting by $n$ any unit vector normal to $\bar x_1-\bar x_2 + k$, we deduce from  (\ref{k-cond}) that
$$u  |(v_1-v_2) \cdot n | \leq 3\eps_0$$
from which we deduce that $v_1-v_2$ belongs to the intersection of $B_{2E}(0)$ and some cylinder of radius $\eps_0/\delta$.
\end{itemize}

The union $K_\delta( x^0_1- x^0_2, \eps_0,  \bar a)$ of these ``bad" sets is therefore of size
$$|K_\delta( x^0_1- x^0_2, \eps_0,  \bar a) | \leq  CE \left(  \left( \frac{\eps_0}{\delta} \right)^{d-1} +  E^{d-1}  
\big(E t \big)^d {\eps_0}^{d-1}\right) \,.$$
 The lemma is proved.
\qed

\bigskip
\noindent
{\bf Acknowledgments.} The authors are grateful to Laurent Desvillettes, Fran\c cois Golse, Herbert Spohn and Balint Toth for fruitful discussions and for suggesting interesting bibliographical references. Finally we extend our thanks to the anonymous referees for suggesting many improvements in the manuscript.
The work of I.G. has been supported by the grant ANR-12-BS01-0013-01.

\end{document}